\def\cec{{\ce_{c,\tau}}}
\def\rr{{\mathbb R}}
\def\rn{{{\rr}^n}}
\def\zz{{\mathbb Z}}
\def\nn{{\mathbb N}}
\def\hh{{\mathbb H}}
\def\ff{{\mathbb F}}
\def\pp{{\mathbb P}}
\def\qq{{\mathbb Q}}
\def\ca{{\mathcal A}}
\def\ce{{\mathcal E}}
\def\cf{{\mathcal F}}
\def\cg{{\mathcal G}}
\def\cj{{\mathcal J}}
\def\cq{{\mathcal Q}}
\def\ct{{\mathcal T}}
\def\fz{\infty}
\def\az{\alpha}
\def\bz{\beta}
\def\dz{\delta}
\def\eps{\varepsilon}
\def\kz{\kappa}
\def\sz{\sigma}
\def\lf{\left}
\def\r{\right}
\def\lfz{{\lfloor}}
\def\rfz{{\rfloor}}
\def\la{\langle}
\def\ra{\rangle}
\def\wz{\widetilde}
\def\wh{\widehat}
\def\st{\subset}
\def\bh{\backslash}
\def\supp{\mathop\mathrm{\,supp\,}}
\newtheorem{theorem}{Theorem}[section]
\newtheorem{lemma}[theorem]{Lemma}
\newtheorem{proposition}[theorem]{Proposition}
\newtheorem{corollary}[theorem]{Corollary}
\theoremstyle{definition}
\newtheorem{definition}[theorem]{Definition}
\numberwithin{equation}{section}
\begin{document}
\arraycolsep=1pt

\title{\bf Fourier dimension and avoidance of linear patterns
\footnotetext{\hspace{-0.35cm} 2010 {\it
Mathematics Subject Classification}.
Primary 42A32; Secondary 42A38.
\endgraf {\it Key words and phrases}.
Salem set, Fourier dimension
\endgraf
}}
\author{Yiyu Liang and Malabika Pramanik}  
\date{ }
\maketitle
\newcommand{\Addresses}{{
  \bigskip
  \footnotesize

  Y.~Liang, \textsc{Department of Mathematics, Beijing Jiaotong University, No.3 Shangyuancun, Haidian District,  Beijing 100044, P.~R.~China}\par\nopagebreak
  \textit{E-mail address}: \texttt{yyliang@bjtu.edu.cn}

  \medskip

  M.~Pramanik, \textsc{Department of Mathematics, 1984 Mathematics Road, University of British Columbia, Vancouver, Canada V6T 1Z2} \par\nopagebreak
  \textit{E-mail address}: \texttt{malabika@math.ubc.ca}

%

}}

\vspace{-0.8cm}

\begin{center}
\begin{minipage}{13cm}
{\small {\bf Abstract}\quad
The results in this paper are of two types. On one hand, we construct sets of large Fourier dimension that avoid nontrivial solutions of certain classes of linear equations. In particular, given any finite collection of translation-invariant linear equations of the form 
\begin{equation}
\sum_{i=1}^v m_ix_i=m_0x_0, \; \text{ with } (m_0, m_1, \cdots, m_v) \in \mathbb N^{v+1},  m_0 = \sum_{i=1}^{v} m_i \text{ and } v \geq 2,  \label{rational-eqn} 
\end{equation}
we find a Salem set $E \subseteq [0,1]$ of dimension 1 that contains no nontrivial solution of any of these equations; in other words, there does not exist a vector $(x_0, x_1, \cdots, x_v) \in E^{v+1}$ with distinct entries that satisfies any of the given equations. Variants of this construction can also be used to obtain Salem sets that avoid solutions of translation-invariant linear equations of other kinds, for instance, when the collection of linear equations to be avoided is uncountable or has irrational coefficients. While such constructions seem to suggest that Salem sets can avoid many configurations, our second type of results offers a counterpoint. We show that a set in $\mathbb R$ whose Fourier dimension exceeds $2/(v+1)$ cannot avoid nontrivial solutions of all equations of the form \eqref{rational-eqn}. In particular, a set of positive Fourier dimension must contain a nontrivial linear pattern of the form \eqref{rational-eqn} for some $v$, and hence cannot be rationally independent. This is in stark contrast with known results \cite{M17} that ensure the existence of rationally independent sets of full Hausdorff dimension. The latter class of results may be viewed as quantitative evidence of the structural richness of Salem sets of positive dimension, even if the dimension is arbitrarily small.         
%
}
\end{minipage}
\end{center}

\tableofcontents
\section{Introduction}
Many problems in geometric measure theory are concerned with the following question: ``Can large sets avoid many patterns?'' Stated in this level of generality, the question lacks precision, both in the quantification of size and in the specification of patterns. ``Large" could be interpreted either as large cardinality, nontrivial Lebesgue measure, positive asymptotic or  Banach density, large Hausdorff, Minkowski or Fourier dimension. ``Patterns" could be geometric in nature, for example, arithmetic or geometric progressions, equilateral triangles, parallelograms; alternatively, they could be algebraic, such as solutions of certain equations. Regardless of the many possible variants of such a question, it would seem that a natural answer would be ``no'', with any reasonable definition. Indeed, there is a large body of work that supports this intuition; see \cite{{BIT-2016}, {CLP}, {GI-2012}, {GILP}, {GIP}, {HLP}, {LP09}}. 
\vskip0.1in
\noindent However, there are also many results in the literature that challenge this intuition, especially when slight variations in the notions of size lead to very different conclusions regarding the existence of patterns. For example, in the discrete setting, a classical result of  
Behrend \cite{B46} says that for any $\eps>0$ and all sufficiently large positive integers $M$, there exists a set $X_M\subseteq [M] := \{0,1, 2, \ldots,M-1\}$
such that $\#(X_M)>M^{1-\eps}$ 
and $X_M$ contains no nontrivial three-term arithmetic progression. This is in sharp contrast with the celebrated results of Roth \cite{{Roth-1}, {Roth-2}} and Szemer\'edi \cite{{Szemeredi-1}, {Szemeredi-2}}, which state that for any $k \geq 3$ and any $c > 0$, there exists $M_0 \geq 1$ such that for $M \geq M_0$, any set $X_M \subseteq [M]$ obeying $\#(X_{M}) \geq cM$ contains a nontrivial $k$-term arithmetic progression. 
\vskip0.1in
\noindent Similar results exist in the continuum as well.  For instance, one can deduce from the Lebesgue density theorem that any set in $\mathbb R$ with a Lebesgue density point contains a nontrivial affine copy of any finite configuration. This conclusion applies therefore to any set of positive Lebesgue measure. On the other hand, Keleti \cite{K98} constructs a compact subset $E \subseteq [0,1]$
with Hausdorff dimension 1 but Lebesgue measure zero 
such that there does not exist any nontrivial solution of $x-y=z-w$, with $x< y \leq z < w$ and $(x,y,z,w) \in E^4$. In particular, $E$ avoids all three-term arithmetic progressions. Many subsequent results \cite{{DPZ}, {FP18}, {HKKMMMS}, {K98}, {K08}, {Maga}, {M17}} have explored the issue of avoidance further, providing examples of sets of large Hausdorff dimension that omit increasingly general families of algebraic and geometric patterns. Let us recall from \cite[Theorem 8.8]{Mattila-Book1} or \cite[Section 4.1]{Falconer} that the {\em{Hausdorff dimension}} $\text{dim}_{\mathbb H}(A)$ of a Borel set $A \subseteq \mathbb R^n$ is the supremum of exponents $\alpha > 0$ with the following property:  
there exists a probability measure $\mu$ supported on $A$ such that for some positive, finite constant $C_1$, 
\begin{equation}  \mu(B(x,r))\le C_1 r^{\alpha} \quad \mbox{ for all } x\in\rn, \; r>0. \label{ball-condition}  \end{equation} 

%
%
\vskip0.1in
\noindent On the other hand, the situation is expected to be different for sets $A$ of large {\em{Fourier dimension}}. The Fourier dimension $\text{dim}_{\mathbb F}(A)$ of a Borel set $A \subseteq \mathbb R^n$ is defined as the supremum of exponents $\beta \leq n$ obeying the following condition: there exists  a probability measure $\mu$ supported on $A$ and a positive finite constant $C_2$ such that 
\begin{equation} 
|\wh \mu(\xi)|\le C_2 (1 + |\xi|)^{- \beta/2}  \text{ for all } \xi \in\rn, \quad \text{ where } \; \wh{\mu}(\xi) := \int e^{-ix\xi} d\mu(x).  
 \label{Fourier-decay-condition} 
\end{equation} 
This expectation is heuristically based on Frostman's lemma \cite[p.~168]{Mattila-Book1}, which states that $\dim_\ff(A)\le\dim_\hh(A)$ for any Borel set $A$. This inequality implies that sets of large Fourier dimension form a smaller sub-class of sets of large Hausdorff dimension. It gives rise to the intuition that such sets are more likely to enjoy additional properties; in particular they could possibly contain a richer class of patterns. 
A Borel set whose Fourier dimension equals its Hausdorff dimension is called a {\it Salem set}.
\vskip0.1in
\noindent The intuition that large Salem sets are richer in structure than their non-Salem counterparts of the same dimension is perhaps also rooted in the known examples of such sets. Salem sets are ubiquitous among random sets. Many random constructions yields sets that are on one hand, often (almost surely) Salem, and on the other embody verifiable algebraic and geometric structures. 
The first such random construction is due to Salem himself \cite{Salem}; many subsequent random constructions have appeared in \cite{{Kahane-1}, {Kahane-2}, {Bluhm-1}, {Bluhm-2}, {LP09}, {Ekstrom}, {Shmerkin-Suomala}, {Chen}, {Chen-Seeger}}.
Deterministic examples of Salem sets are comparatively fewer \cite{{Jarnik-1}, {Jarnik-2}, {Kaufman}, {Hambrook-1}, {Hambrook-2}}, but they arise naturally in number theory \cite{{Besicovitch}, {Bovey-Dodson}, {Eggleston}} and are rich in arithmetic patterns as well. The work of K\"orner \cite{{Korner1},{Korner2}}, which explicitly addresses the relation between the rate of decay of the Fourier transform of a measure and possible algebraic relations within its support, is perhaps closest to the main focus of our article. 
\vskip0.1in
\noindent In this paper, we will provide a quantitative formulation of the heuristic principle that Salem sets possess richer structure, in the specific context of translation-invariant linear patterns. More precisely, we will be concerned with algebraic patterns that occur as a nontrivial zero of some function in the class  
\begin{align} &\hskip1.5in \cf  = \cf(\mathbb N) := \bigcup_{v=2}^{\infty} \cf_v(\mathbb N), \text{ where } \label{def-cF} \\ \cf_v(\mathbb N) &:=\lf\{f(x_0, \ldots, x_v) :=m_0x_0-\sum_{i=1}^v m_ix_i \; \vline 
\; \begin{aligned} &m_0, \cdots, m_v \in \mathbb N, \; m_0=\sum_{i=1}^v m_i, \\ &\text{gcd}(m_0, m_1, \cdots, m_v) = 1\end{aligned} \r\}. \label{def-Fn} \end{align} 
Here $v \in\mathbb N\bh\{1\}$ and $\mathbb N:=\{1,2,\ldots\}$. 
\begin{definition} The following definitions will be used throughout the article. \label{definition-pattern} 
\begin{itemize} 
\item Given $f \in \cf_{v}(\nn)$, a vector $x = (x_0, x_1, \ldots, x_v) \in \mathbb R^{v+1}$ is said to be a {\em{zero of $f$}} if it obeys the equation $f(x_0, \ldots, x_v) = 0$. Such a vector $x$ will also be referred to as a {\em{solution}} \label{def-zero} of the equation $f(x_0, \cdots, x_v)  = 0$. 
\item A zero $x = (x_0, \ldots, x_v) \in \mathbb R^{v+1}$ of a function $f \in \cf_v(\nn)$ is said to be {\em{nontrivial}} if the entries of $x$ are all distinct. All other zero vectors of $f$ are called {\em{trivial}}. These terms apply to solutions of equations of the form $f = 0$ as well. \label{def-nontrivial-zero} 
\item Given a set $E \subseteq \mathbb R$, we say that $E$ contains {\em{a nontrivial zero of $f \in \cf_\nu(\nn)$}} provided there exists $x = (x_0, x_1, \ldots, x_v) \in E^{v+1}$ with all distinct entries such that $f(x) = 0$. If no such $x \in E^{v + 1}$ exists, we say that $E$ {\em{avoids all nontrivial zeros of $f$}}. 
\item A set $E \subseteq \mathbb R$ is said to contain {\em{a nontrivial translation-invariant rational linear pattern}} provided it contains a nontrivial zero of some $f \in \cf$.  
\end{itemize}
\end{definition}
\noindent A three-term arithmetic progression $(x_0, x_1, x_2)$ with nonzero common difference is a simple example of a nontrivial translation-invariant rational linear pattern, since it is a nontrivial zero of the function $f(x_0, x_1, x_2) = 2x_0 - (x_1 + x_2)$. If a vector $x = (x_0, \cdots, x_v) \in \mathbb R^{v+1}$ is a trivial zero of some $f \in \mathcal F_{v} (\nn)$ with $v \geq 3$ but has at least two distinct entries, then the vector  $y = (y_0, \cdots, y_{\nu'})$ consisting of the distinct entries of $x$ provides a nontrivial zero of some $g \in \mathcal F_{\nu'}(\nn)$, $\nu' < \nu$.     
 \vskip0.1in
\noindent In the following subsection, we provide answers to variants of the following question: {\em{given $\cf^{\ast} \subseteq \cf(\mathbb N)$, how large a set $E \subset \mathbb R$, in the sense of Fourier dimension, can one construct that avoids all the nontrivial zeros of all $f \in \cf^{\ast}$? Alternatively, are sets of large enough Fourier dimension guaranteed to contain a nontrivial zero of some $f \in \cf^{\ast}$?}} The requirement $m_0 = \sum_{i=1}^{v} m_i$ in $\mathcal F_v(\mathbb N)$ is designed to avoid trivial answers; without this assumption, one can always find an avoiding interval (of positive Lebesgue measure) centred around 1. 
\subsection{Statement of results} 
We begin by providing the background that led to this work. In \cite[Theorem 1.2]{LP09},  \L aba and the second author show that if a compact set $A \subseteq [0,1]$ supports a probability measure $\mu$ obeying a ball condition of the type \eqref{ball-condition} and a Fourier decay condition of the type \eqref{Fourier-decay-condition}, then $A$ contains a nontrivial three term  arithmetic progression, provided (a) $\beta > 2/3$, (b) the constants $C_1$ and $C_2$ are appropriately controlled, and (c) the exponent $\alpha$ is sufficiently close to 1. 
The article \cite[Section 7]{LP09} also contains a large class of examples of Salem sets that verify the hypotheses of \cite[Theorem 1.2]{LP09}. This leads to a natural question whether the technical growth conditions (b) on $C_1, C_2$ are truly necessary, and whether progressions exist in any set of large enough Fourier dimension. This naive expectation is however false. Shmerkin \cite[Theorems A and B]{S17} has recently proved the existence of  a compact full-dimensional Salem set  contained in $[0,1]$ that avoids all nontrivial arithmetic progressions. The existence of such a Salem set seems, at first glance, to contradict the conventional belief that such sets should enjoy richer structure. 
\subsubsection{Rational linear patterns} The main content of our first three results is that even though a Salem set of large dimension can avoid a specific linear pattern (or even finitely many) given by $\mathcal F$, it cannot avoid all of them.   

\begin{theorem}\label{t-2}
Given $v\in\nn$, $v\ge2$, let $E\subseteq [0,1]$ be a closed set satisfying $\dim_\ff (E)>\frac2{v+1}$; i.e.,  there exist some $\bz>\frac1{v+1}$, a probalility
measure $\mu$ supported on $E$
and some positive constant $C$ such that
\begin{equation}\label{star}
|\hat \mu(\xi)|\le C(1+|\xi|)^{-\bz}.
\end{equation}
Then $E$ contains a nontrivial zero of some $f \in \cf_v(\mathbb N)$ defined in \eqref{def-Fn}. In other words, there exists $\{m_0,\ldots,m_v\} \st \mathbb N$
satisfying $m_0=\sum_{i=1}^vm_i$,
such that $E$ contains a nontrivial solution of the 
equation 
\begin{equation}\label{ab}
\sum_{i=1}^vm_ix_i=m_0x_0.
\end{equation}
\end{theorem}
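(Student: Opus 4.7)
The plan is to count near-solutions via Fourier analysis and extract an exact nontrivial solution by compactness, after showing that near-diagonal configurations give a subordinate contribution.

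For each admissible $\vec{m} = (m_0, m_1, \ldots, m_v) \in \mathbb{N}^{v+1}$ with $m_0 = \sum_{i=1}^v m_i$, let $X_0, X_1, \ldots, X_v$ be independent samples from $\mu$, and let $\tau_{\vec m}$ denote the distribution of the linear combination $Y_{\vec m} := m_0 X_0 - \sum_{i=1}^v m_i X_i$. By independence,
\begin{equation*}
\hat\tau_{\vec m}(\xi) = \hat\mu(-m_0 \xi)\prod_{i=1}^v \hat\mu(m_i \xi).
\end{equation*}
The decay hypothesis \eqref{star} gives $|\hat\tau_{\vec m}(\xi)| \lesssim (1+|\xi|)^{-(v+1)\beta}$, which is integrable on $\mathbb R$ since $(v+1)\beta > 1$. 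Fourier inversion then supplies $\tau_{\vec m}$ with a continuous bounded nonnegative density $\rho_{\vec m}$, and at the origin
\begin{equation*}
\rho_{\vec m}(0) = \frac{1}{2\pi}\int_{\mathbb R}\hat\mu(-m_0\xi)\prod_{i=1}^v \hat\mu(m_i \xi)\, d\xi \geq 0.
\end{equation*}

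The principal obstacle is to upgrade this to strict positivity, $\rho_{\vec m}(0) > 0$, for at least one admissible $\vec m$. Because $\beta > 1/(v+1)$ sits exactly at the $L^1$-integrability threshold of $\hat\tau_{\vec m}$, the density is only barely continuous, and extracting positivity is delicate. I would first attempt a smoothing approach: replace $\mu$ by $\mu_\eta := \mu * \phi_\eta$ for $\phi_\eta$ a bump of width $\eta$. For the smoothed measure, the corresponding quantity $\rho_{\vec m}^{(\eta)}(0)$ admits a Parseval-type rewriting
\begin{equation*}
\rho_{\vec m}^{(\eta)}(0) = \frac{1}{m_0}\int g_\eta(x)\, d\mu_\eta(x),
\end{equation*}
with $g_\eta \geq 0$ the density of the $v$-fold convolution $(m_1/m_0)_*\mu_\eta * \cdots * (m_v/m_0)_*\mu_\eta$. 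A judicious choice of $\vec m$ (for instance, with $m_0$ large and the $m_i$'s spread out so that the convolution charges the support of $\mu_\eta$) should give a lower bound for $\rho_{\vec m}^{(\eta)}(0)$ that survives the limit $\eta \to 0$. Alternatively one can argue by contradiction: if $\rho_{\vec m}(0) = 0$ for every admissible $\vec m$, the resulting infinite family of Fourier identities should impose rigid constraints on $\mu$ that conflict with its positive Fourier dimension.

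Next, I would separate nontrivial tuples from near-diagonal ones. Setting $F_{\vec m}(\vec x) = m_0 x_0 - \sum_{i=1}^v m_i x_i$ and $D_\delta = \bigcup_{0 \leq i < j \leq v}\{|x_i - x_j|<\delta\}$, each pair-wise near-diagonal mass $\mu^{v+1}(\{|F_{\vec m}|<\epsilon\}\cap\{|x_i - x_j|<\delta\})$ is analyzed via a two-parameter Fourier integral with kernel
\begin{equation*}
\hat\phi_\epsilon(\xi_1)\,\hat\psi_\delta(\xi_2)\, \hat\mu(-m_0\xi_1)\, \hat\mu(m_i\xi_1 - \xi_2)\, \hat\mu(m_j\xi_1 + \xi_2)\prod_{k \neq 0, i, j}\hat\mu(m_k\xi_1),
\end{equation*}
where $\phi_\epsilon, \psi_\delta$ are bumps of widths $\epsilon$ and $\delta$. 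The crucial transversality $m_0 > m_i$ for every $i \geq 1$ (forced by $m_0 = \sum m_i$ with $m_j \geq 1$) decouples the two constraint directions and should yield a bound of the form $C\cdot 2\epsilon\rho_{\vec m}(0) \cdot o_{\delta \to 0}(1)$, strictly smaller than the full near-solution mass $\mu^{v+1}(\{|F_{\vec m}|<\epsilon\}) \sim 2\epsilon\rho_{\vec m}(0)$ as $\epsilon \to 0$.

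Fix $\delta$ small enough that the total pair-wise contribution, summed over $(i,j)$, is less than $\tfrac12 \cdot 2\epsilon\rho_{\vec m}(0)$ for all sufficiently small $\epsilon$. Then the nontrivial near-solution set $\{|F_{\vec m}|<\epsilon\}\setminus D_\delta$ has positive $\mu^{v+1}$-measure, so for each $n$ one can select $\vec x^{(n)} \in E^{v+1}\setminus D_\delta$ with $|F_{\vec m}(\vec x^{(n)})|<1/n$. Compactness of $E^{v+1}$ yields a convergent subsequence with limit $\vec x^* \in E^{v+1}$ satisfying $F_{\vec m}(\vec x^*) = 0$ and $\min_{i\neq j}|x_i^* - x_j^*| \geq \delta > 0$, which is the required nontrivial zero of $m_0 x_0 - \sum_{i=1}^v m_i x_i \in \mathcal F_v(\mathbb N)$. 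The hardest step remains the positivity of $\rho_{\vec m}(0)$ for some $\vec m$, which is where the specific exponent $2/(v+1)$ enters decisively.
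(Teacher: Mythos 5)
Your framework---the configuration measure $\tau_{\vec m}$, its Fourier representation, the integrability supplied by $(v+1)\beta>1$, the removal of near-diagonal mass via a two-parameter Fourier integral, and the final compactness extraction---closely matches the machinery the paper assembles in Proposition \ref{p-2}. But the step you yourself flag as the principal obstacle, namely strict positivity of $\rho_{\vec m}(0)$ for \emph{some} admissible $\vec m$, is exactly where the theorem lives, and neither of your two suggested routes closes it. A ``judicious choice of $\vec m$'' cannot work: by Theorem \ref{t-1} of this very paper (building on Shmerkin's construction), for \emph{any} pre-selected finite collection of admissible coefficient vectors there is a full-dimensional Salem set, hence a measure satisfying \eqref{star} with $\beta$ arbitrarily close to $1/2$, whose support avoids all nontrivial zeros of the corresponding equations; since (as your own diagonal estimate shows) the near-diagonal mass is always subordinate, $\rho_{\vec m}(0)$ must vanish for such measures. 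So no lower bound on $\rho^{(\eta)}_{\vec m}(0)$ for a fixed $\vec m$ can survive the limit $\eta\to 0$ in general. Your alternative---that simultaneous vanishing of all $\rho_{\vec m}(0)$ imposes ``rigid constraints'' conflicting with positive Fourier dimension---is the correct idea, but as stated it is a restatement of the theorem, not a proof.

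The paper's mechanism for extracting a good $\vec m$ is the essential missing ingredient. One extends the family of configuration measures from rational coefficient vectors to the whole simplex $\ct_v^{\ast}=\{\mathbf t\in(0,1)^v:\ t_1+\cdots+t_v=1\}$, proves (Lemma \ref{cty-lemma}) that $\mathbf t\mapsto\la\Lambda_{\mathbf t},\mathtt f\ra$ is continuous, and observes that under the avoidance hypothesis this function vanishes at every rational $\mathbf t$ and hence, by density, identically on $\ct_v^{\ast}$. The contradiction then comes not from a pointwise lower bound at one $\vec m$ but from an exact integral identity: writing $1=(\int\mu_\eps)^{v+1}$ and changing variables $z=sx_1+\frac{1-s}{v-1}(x_2+\cdots+x_v)$ gives $1\le\int_0^1\la\Lambda_{\mathbf t(s)}^{(\eps)},\mathtt f_\ast\ra\,ds$ for the explicit nonnegative test function $\mathtt f_\ast$ of \eqref{def-t(s)}, and a dominated-convergence passage (using the integrability of $s\mapsto C_0(\mathbf t(s))$ from \eqref{t-diagonal}) yields $\int_0^1\la\Lambda_{\mathbf t(s)},\mathtt f_\ast\ra\,ds\ge 1$, incompatible with identical vanishing. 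This Fubini-plus-continuity-plus-density argument over the continuum of coefficients is what converts ``positive for some $\vec m$'' from a hope into a theorem; without it, your proof does not go through.
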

\begin{corollary} 
Let $E \subseteq [0,1]$ be a closed set of positive Fourier dimension. Then $E$ contains a nontrivial translation-invariant rational linear pattern, in the sense of Definition \ref{definition-pattern}. 
\end{corollary}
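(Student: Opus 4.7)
The plan is to deduce the corollary directly from Theorem~\ref{t-2} by choosing the parameter $v$ in terms of $\dim_\ff(E)$. Since $E$ has positive Fourier dimension, the very definition of $\dim_\ff$ supplies an exponent $\bz_0 \in (0, \dim_\ff(E))$, a probability measure $\mu$ supported on $E$, and a positive constant $C$ such that $|\wh\mu(\xi)| \le C(1+|\xi|)^{-\bz_0/2}$ for every $\xi \in \mathbb R$. I would then select $v \in \nn$ with $v \ge 2$ large enough that $2/(v+1) < \bz_0$, which is possible precisely because $\bz_0$ is strictly positive. Setting $\bz := \bz_0/2$, one has $\bz > 1/(v+1)$, and the decay estimate on $|\wh\mu|$ becomes exactly the Fourier decay hypothesis~\eqref{star} required by Theorem~\ref{t-2} for this choice of $v$; equivalently, $E$ satisfies $\dim_\ff(E) > 2/(v+1)$ through the witness measure $\mu$.

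Invoking Theorem~\ref{t-2} with this $v$ and this $\mu$, the set $E$ must contain a nontrivial zero of some $f \in \cf_v(\nn)$. Because $\cf_v(\nn) \subseteq \cf$ by the definition~\eqref{def-cF}, such a zero is, according to Definition~\ref{definition-pattern}, precisely a nontrivial translation-invariant rational linear pattern inside $E$. There is no genuine obstacle in this reduction; the only subtle point is that $\dim_\ff(E)$ is defined as a supremum, so one must first extract a concrete exponent $\bz_0$ strictly below $\dim_\ff(E)$ and an associated measure $\mu$ realizing the corresponding decay, and only afterwards choose the integer $v$ in terms of $\bz_0$ so that Theorem~\ref{t-2} can be applied.
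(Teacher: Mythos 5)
Your proposal is correct and matches the paper's (implicit) argument: the paper states no separate proof for this corollary, and its Remark~2 following the corollary carries out exactly the same reduction — extract a concrete decay exponent $\bz_0$ below $\dim_\ff(E)$ with a witness measure $\mu$, choose $v$ large enough that $\bz_0/2 > 1/(v+1)$, and apply Theorem~\ref{t-2}. Your attention to the supremum in the definition of $\dim_\ff$ and to the factor of $2$ between the decay exponent in \eqref{Fourier-decay-condition} and the hypothesis \eqref{star} is exactly right.
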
 
\vskip0.1in 
\noindent {\em{Remarks: }} \begin{enumerate}[1.]
\item We compare Theorem \ref{t-2} with earlier results of K\"orner \cite{{Korner1}, {Korner2}}. For instance, in \cite[Lemma 2.3]{Korner2} he shows that if $E$ is a subset of the unit circle $\mathbb T = \mathbb R/\mathbb Z$ with $\dim_{\mathbb F}(E) > \frac{2}{(v+1)}$, then there exist integers $m_0, m_1, \cdots, m_v \in \mathbb Z$, not all zero, and distinct points $x_0, x_1, \cdots, x_v \in E$ such that 
\begin{equation} m_0 x_0 = m_1 x_1 + \cdots + m_v x_v \text{ (mod 1)}. \label{Korner} \end{equation}  Apriori, one does not know the number of integers $m_j$ that are zero in the above equation, the signs of the nonzero integers $m_j$ and whether the equation is translation-invariant. On the other hand, the linear equations stemming from $\cf_v(\mathbb N)$ and underlying Theorem \ref{t-2} are exact (not modulo integers), and the coefficients $m_0, \cdots, m_v$ are all positive with the further constraint $m_0 = m_1 + \cdots + m_v$. K\"orner \cite[Theorem 2.4]{Korner2} also constructs a set $E \subseteq \mathbb T$ of Fourier dimension $1/v$ with the following property: there does not exist any nonzero vector $(m_0, \ldots, m_v) \in \mathbb Z^{v+1}$ for which the equation \eqref{Korner} admits a nontrivial solution consisting of distinct points $x_0, x_1, \ldots, x_v \in E$. K\"orner's construction is based on a Baire category argument,  and therefore non-explicit. We ask the interested reader to compare K\"orner's construction of an avoiding set with the avoidance results in this paper (Theorems \ref{t-1}, \ref{t-3} and \ref{t-5}), which avoid more restricted classes of equations but are of larger Fourier dimension. 
\vskip0.1in
\item As another point of contrast, we mention a construction of Keleti \cite{K08} that provides, for any countable set $T \subseteq (0,1)$, a subset $E \subseteq [0,1]$ of Hausdorff dimension 1 that does not contain any triple of distinct points $\{x, y, z\}$ such that $tx + (1-t)y = z$ for any $t \in T$. Choosing $v = 2$ and $T = \mathbb Q \cap (0,1)$, the set of rationals in $(0,1)$, we observe that $\dim_{\mathbb F}$ in Theorem \ref{t-2} cannot be replaced by $\dim_{\mathbb H}$. Generalizing Keleti's result, Math\'e \cite{M17} proves the existence of a rationally independent set in $\mathbb R$ of full Hausdorff dimension.  We recall that a set $E \subseteq \mathbb R$ is {\em{rationally independent}} if for any integer $v \geq 2$ and any choice of distinct points $x_1, x_2, \cdots, x_v \in E$,
\[ \sum_{j=1}^{v} a_j x_j = 0 \text{ with } \{a_1, \cdots, a_v \} \subseteq \mathbb Z \qquad \text{ implies } \qquad a_1 = a_2 = \cdots = a_v = 0.  \] 
  Theorem \ref{t-2} implies that such sets cannot be Salem. Indeed any set $E \subseteq \mathbb R$ of positive Fourier dimension will support a probability measure $\mu$ that satisfies \eqref{star} for some $v \in \mathbb N$ and some $\beta > 1/(v+1)$. By Theorem \ref{t-2} it will contain a rationally dependent $(v+1)$-tuple of distinct points that obeys a relation of the form \eqref{ab}. 
\end{enumerate} 
\begin{corollary} \label{Cor-noSalem} 
There can be no rationally independent set in $\mathbb R$ of positive Fourier dimension.   
\end{corollary}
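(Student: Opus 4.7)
The plan is to deduce the corollary directly from Theorem \ref{t-2}. The key observation is that positive Fourier dimension supplies some exponent $\bz > 0$, while the threshold $2/(v+1)$ of Theorem \ref{t-2} can be driven below $\bz$ by choosing $v$ large enough. Two preparatory steps are needed first: localizing the witnessing measure to a bounded subset of $E$, and then rescaling that subset into $[0,1]$.

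Suppose $E \subseteq \rr$ has positive Fourier dimension. By definition, there exist $\bz > 0$, a probability measure $\mu$ supported on $E$, and a constant $C > 0$ such that $|\hat\mu(\xi)| \le C(1+|\xi|)^{-\bz}$ for all $\xi \in \rr$. To localize, I would choose a nonnegative $\phi \in C_c^\infty(\rr)$ with $\int \phi\, d\mu > 0$ and set $\nu := \phi\mu/\int\phi\,d\mu$. This is a probability measure supported on $\supp(\phi) \cap E$, and since $\hat\phi$ is Schwartz, a standard splitting of $\hat\nu = (\hat\phi \ast \hat\mu)/\int\phi\,d\mu$ into the regions $|\eta| \le |\xi|/2$ and $|\eta| > |\xi|/2$ yields $|\hat\nu(\xi)| \ls (1+|\xi|)^{-\bz}$. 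Next, choosing an affine map $T(x) = x/L + a$ with $L > 0$ and $a \in \rr$ so that $T(\supp \nu) \subseteq [0,1]$, the pushforward $T_\ast\nu$ is a probability measure on $T(E) \cap [0,1]$ whose Fourier transform inherits the decay rate $\bz$ up to a constant depending on $L$.

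With the rescaled set now sitting inside $[0,1]$, I would pick $v \in \nn$ with $v \ge 2$ and $v > 1/\bz - 1$, so that $\bz > 1/(v+1)$. Theorem \ref{t-2} then produces positive integers $m_0, m_1, \ldots, m_v \in \nn$ satisfying $m_0 = \sum_{i=1}^v m_i$, together with distinct points $z_0, z_1, \ldots, z_v \in T(E) \cap [0,1]$ such that $m_0 z_0 = \sum_{i=1}^v m_i z_i$. Writing $z_i = T(y_i) = y_i/L + a$ for distinct $y_i \in E$, the constants $a$ cancel from both sides precisely because $m_0 = \sum_{i=1}^v m_i$, leaving $m_0 y_0 = \sum_{i=1}^v m_i y_i$ as a relation in $E$ itself. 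Since every $m_i$ is a positive integer, this is a nontrivial integer linear relation among distinct elements of $E$, contradicting rational independence.

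The entire argument is essentially immediate once Theorem \ref{t-2} is available. No step poses a serious obstacle: Fourier decay is preserved under smooth multiplicative cutoff by the standard convolution estimate sketched above, and the translation invariance of the equation---which is precisely the reason for imposing the constraint $m_0 = \sum_{i=1}^v m_i$ in the definition of $\cf_v(\nn)$---is exactly what makes the affine rescaling step harmless.
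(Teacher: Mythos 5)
Your proof is correct and takes the same route the paper intends: fix $v$ large enough that $\beta > 1/(v+1)$ and invoke Theorem \ref{t-2} to produce a rationally dependent $(v+1)$-tuple of distinct points. The paper's remark preceding the corollary leaves the passage from a general $E\subseteq\mathbb R$ to a closed subset of $[0,1]$ implicit; your cutoff-and-rescale step makes that reduction explicit, and is exactly the right way to do it, with the translation invariance $m_0=\sum_{i=1}^v m_i$ being precisely what renders the affine change of variables harmless.
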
 
%
%
\noindent However, it is possible for a large Salem set to avoid nontrivial zeros of any {\em{finite}} sub-collection of $\cf$, as our next result illustrates.  
\begin{theorem}\label{t-1}
Let $\cf$ be as in \eqref{def-cF}. Given any finite collection $\cg \st\cf$,
there exists a set $E\st [0,1]$ with $\dim_\ff E=1$
such that $E$
contains no nontrivial zero of any $f\in\cg$.
\end{theorem}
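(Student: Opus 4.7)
The plan is to construct $E$ via a randomized Cantor-type procedure in the spirit of the constructions of Salem, Kahane, Bluhm, \L aba--Pramanik and (most directly) Shmerkin \cite{S17}, combined with a deterministic pruning step at every stage that removes subintervals whose selection would create a nontrivial zero of some $f \in \cg$. Choose a rapidly increasing sequence of integers $N_1, N_2, \ldots$ and thinning parameters $K_n \le N_n$ with $K_n/N_n \to 1$, and set $\delta_n := (N_1 \cdots N_n)^{-1}$. Iteratively define $E_n$ as a disjoint union of closed intervals of length $\delta_n$ nested in $E_{n-1}$: partition each parent interval $I \subseteq E_{n-1}$ into $N_n$ children of length $\delta_n$, mark some of them as \emph{forbidden} (see below), and select $K_n$ of the remaining children uniformly at random without replacement. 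Set $E := \bigcap_{n \ge 1} E_n$.

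Both Fourier decay and avoidance are now built in. For the Fourier dimension, the normalized Lebesgue measure on $E_n$ converges weakly to a Borel probability measure $\mu$ supported on $E$; a standard concentration analysis of the random exponential sum $\wh{\mu}_n(\xi)$ (Bernstein/Hoeffding bounds applied after a dyadic discretization of the frequency variable, as in \cite{LP09, S17}) shows that, for any preassigned $\beta < 1$ and for an appropriate parameter schedule, $|\wh\mu(\xi)| \le C_\beta (1+|\xi|)^{-\beta/2}$ with probability arbitrarily close to $1$, yielding $\dim_\ff E = 1$. For avoidance, at each stage $n$, each $f \in \cf_v(\nn) \cap \cg$, and each parent $I$, a child $J \subseteq I$ is declared forbidden if $J$ together with $v$ already-selected distinct intervals of $E_{n-1}$ could complete a nontrivial solution of $f = 0$. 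Since each $f$ defines a codimension-one hyperplane with positive integer coefficients and $\cg$ is finite, a direct counting argument bounds the number of forbidden children per parent by $o(N_n)$, provided $N_n$ is taken to grow sufficiently faster than $(K_1 \cdots K_{n-1})^v$. Consequently at least $K_n$ admissible children remain per parent, and any nontrivial zero of some $f \in \cg$ lying in $E^{v+1}$ would consist of $v+1$ distinct points lying in $v+1$ distinct intervals of $E_n$ at every fine enough scale, contradicting the deterministic exclusion at the first separating scale.

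The main obstacle is quantitative. The deterministic forbidden count grows like $(\prod_{j < n} K_j)^v$, so $N_n$ must be chosen (essentially iterated-exponentially) to absorb this and still leave $K_n = N_n - o(N_n)$ admissible children. Simultaneously, $K_n/N_n$ must approach $1$ at a rate dictated by the Fourier decay target, and one must verify that the concentration estimate for $|\wh{\mu}_n(\xi)|$ is robust under the $o(N_n)$ deterministic modification at each stage. Handling $\cg$ of arbitrary finite cardinality costs only a factor of $|\cg|$ in the forbidden count, which is harmless; the genuinely delicate point is coupling the probabilistic Fourier-decay machinery to the deterministic pruning schedule so that the final measure $\mu$ enjoys Fourier decay of \emph{every} order $\beta < 1$ rather than a single preassigned one. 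This can be achieved either by a diagonal argument (constructing, for each $\beta_k \uparrow 1$, a set $E_k$ of avoidance with $\dim_\ff E_k > \beta_k$ and taking the union), or by a single construction with a carefully tuned schedule; both approaches rely on the finiteness of $\cg$ in an essential way.
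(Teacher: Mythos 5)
There is a genuine gap at the heart of your avoidance argument, namely in the treatment of solutions whose $v+1$ points all lie in a \emph{single} parent interval $I \subseteq E_{n-1}$ and separate for the first time at stage $n$. For such configurations the ``$v$ already-selected distinct intervals of $E_{n-1}$'' in your forbidding rule do not exist: the relevant constraint is that the index set of selected children of $I$ must itself contain no (approximate) nontrivial solution of any $f\in\cg$. This is a Behrend-type problem, and your claim that only $o(N_n)$ children per parent are forbidden is false here --- indeed it is backwards. Already for $f(x_0,x_1,x_2)=2x_0-x_1-x_2$, Roth's theorem forces any progression-free subset of $[N_n]$ to have density $o(1)$, so no growth schedule for $N_n$ can leave $K_n = N_n - o(N_n)$ admissible children; the correct achievable size is $N_n^{1-o(1)}$ (which still suffices for dimension $1$, since one only needs $\log(K_1\cdots K_n)/\log(N_1\cdots N_n)\to 1$), but obtaining it requires Behrend's sphere construction, not a union bound over forbidden configurations --- a greedy pruning of the kind you describe produces something far sparser (for $3$-APs, the Stanley sequence of size $N^{\log 2/\log 3}$). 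This is precisely the role of Proposition \ref{p-1} and Lemma \ref{l-1} in the paper: a Behrend set $Y_N$, dilated by a factor $A$ comparable to the coefficients appearing in $\cg$ so that the approximate solutions forced by working with intervals rather than points collapse to exact ones, yields a union of intervals in which any solution of $f=0$ lies in a \emph{single} interval. Your infinite-descent step also needs this stronger single-interval conclusion, since at the first separating scale the points may occupy only $2\le k\le v$ distinct children rather than $v+1$.

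A second, related problem is that adaptive pruning destroys the distributional hypotheses of the Fourier-decay machinery. Shmerkin's theorem (Theorem \ref{t-A}) requires each digit of $[M_{n+1}]$ to be selected with probability exactly $L_{n+1}/M_{n+1}$, independently across nodes; conditioning the admissible children of one parent on selections made elsewhere violates both requirements, and it is not standard that the concentration estimates survive such correlations. The paper sidesteps this entirely by taking the selected children at every node to be a \emph{fixed} Behrend-type set $U_{M_{n+1}}$ shifted by an independent uniform random translate $\ell_{\mathbf j}$ modulo $M_{n+1}$: the random translation restores conditions (I)--(II) exactly, while the translation-invariance of the equations in $\cg$ preserves the avoidance property of $U_{M_{n+1}}$. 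If you replace your pruning step by this fixed-set-plus-random-translate device, the rest of your outline (nested intervals, weak limit measure, descent to a single interval at every scale) goes through and coincides with the paper's proof.
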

\noindent  Corollary \ref{Cor-noSalem} and Theorem \ref{t-1} lead to a natural question: {\em{does there exist a full-dimensional Salem set that manages to avoid the nontrivial zeros of some countably infinite sub-collection of $\cf$?}} We answer this question in the affirmative; see Theorem \ref{t-5}. At the moment, we do not know how to characterize such sub-collections.   
\vskip0.1in
\noindent Our next result attempts to strike a balance of a different sort between Theorems \ref{t-2} and \ref{t-1}. While Theorem \ref{t-2} dictates that a Salem set of large Fourier dimension must necessarily contain a nontrivial zero $(x_0, x_1, \ldots,  x_v)$ of some function $f \in \mathcal F_v$ and some $v \geq 2$, it apriori does not specify the diameter or spread of such a solution, 
\[ \text{diam}(x_0, \ldots, x_v) = \max \bigl\{|x_i - x_j| ; i, j \in \{0, 1, \cdots, v\}, i \ne j \bigr\}, \] 
which could in principle be very small; in other words the nontrivial solution could be ``almost trivial". We now show that it is possible to construct a full-dimensional Salem set that prohibits, in a quantifiable way, nontrivial zeros from being almost trivial.    
\begin{theorem} \label{t-3} 
There exists a set $E \subseteq [0,1]$, $\dim_\ff E=1$ with the following property. For every $v \geq 2$ and every $f \in \mathcal F_v(\mathbb N)$ defined as in \eqref{def-Fn}, there exists $\kappa > 0$ such that whenever there exists a $(v+1)$-tuple $(x_0, x_1, \cdots, x_v) \in E^{v + 1}$ with \begin{equation}  \text{diam}(x_0, x_1, \cdots, x_v) < \kappa \quad  \text{ and  } \quad f(x_0, x_1, \cdots, x_v) = 0, \label{small-diam}  \end{equation}  
we have that $x_0 = x_1 = \cdots = x_v$.  In particular, a nontrivial zero of $f$ in $E$, if it exists, would obey diam$(x_0, \cdots, x_v) \geq \kappa$. 
\vskip0.1in
\noindent In addition, the constant $\kappa = \kappa_N$ can be chosen uniformly for all $f \in \cf$ whose coefficients are bounded by $N$.  
\end{theorem}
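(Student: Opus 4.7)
The plan is to build $E$ as a nested intersection $\bigcap_n E_n$ of finite unions of closed intervals, combining a randomized placement scheme (to force $\dim_\ff E = 1$) with combinatorial constraints at each stage (to forbid small-diameter zeros of equations in $\cf$). Since $\cf(\nn)$ is countable, partition it as $\cf = \bigcup_{N \in \nn} \cg_N$, where $\cg_N$ denotes the finite subset of $\cf$ consisting of equations all of whose coefficients are at most $N$. Fix rapidly decreasing scales $1 = \delta_0 > \delta_1 > \cdots$ and integers $K_n \ge 2$ satisfying $\log K_n / \log(1/\delta_n) \to 1$; the set $E_n$ will be a disjoint union $\bigsqcup_{I \in \mathcal I_n} I$ of closed intervals of common length $\delta_n$.

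Given $\mathcal I_{n-1}$, inside each parent $J \in \mathcal I_{n-1}$ I would place $K_n$ disjoint sub-intervals of length $\delta_n$ at positions chosen by random shifts, restricted so that the following robust avoidance condition holds: for every $f \in \cg_n \cap \cf_v(\nn)$ and every tuple of distinct sub-intervals $I_0', \ldots, I_v'$ of $J$, one has $0 \notin f(I_0' \times \cdots \times I_v')$. The key observation is that each $f \in \cf_v(\nn)$ is invariant under affine maps $x \mapsto cx + b$: substituting yields $f(cx_0 + b, \ldots, cx_v + b) = c\, f(x_0, \ldots, x_v)$, using $m_0 = \sum_{i=1}^v m_i$. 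Hence the avoidance requirement rescales identically inside every parent interval. Imposing this condition at every stage $m \ge n$, which is possible since $\cg_n \subseteq \cg_m$, an induction on $m$ shows $E \cap J$ contains no nontrivial zero of any $f \in \cg_n$: a hypothetical zero with all coordinates in a single child $I' \in \mathcal I_n$ rescales, by affine invariance, to a nontrivial zero inside the next-generation construction in $I'$, which is again forbidden.

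For the Salem property, I would rely on the standard random Cantor-set arguments developed by Bluhm, \L aba--Pramanik, and Shmerkin--Suomala: the natural Cantor measure $\mu$ on $E$, assembled from sufficiently independent random sub-interval placements, satisfies $|\wh\mu(\xi)| \lesssim_\eps (1+|\xi|)^{-1/2 + \eps}$ with positive probability for every $\eps > 0$. Together with Hausdorff dimension $1$, inherited from the growth condition on $K_n$, this yields $\dim_\ff E = 1$. Setting $\kappa_N := \delta_{n(N)-1}$, where $n(N)$ is the smallest integer with $\cg_N \subseteq \cg_{n(N)}$, delivers the uniformity claim: any nontrivial zero of $f \in \cg_N$ in $E$ with diameter less than $\kappa_N$ would lie inside a single $I \in \mathcal I_{n(N)-1}$, contradicting the construction.

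The principal obstacle is reconciling the algebraic avoidance with the randomization required for Fourier decay. At stage $n$ one must bound the Lebesgue measure of the ``bad'' parameter set of those sub-interval placements for which $0 \in f(I_0' \times \cdots \times I_v')$ for some $f \in \cg_n$ and some tuple. Since $\{f = 0\}$ has codimension one in $\rr^{v+1}$, a union bound over the finitely many $f \in \cg_n$, over the $O(K_n^{v+1})$ tuples of children, and over a discretization at scale $\delta_n$, controls the bad measure. Choosing the $\delta_n$ to decrease quickly enough relative to $|\cg_n|$ and $K_n$ keeps the good parameter set large enough to still support $\mathbb E |\wh\mu(\xi)|^2 \lesssim_\eps (1 + |\xi|)^{-1 + \eps}$, so that both constraints hold simultaneously with positive probability. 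Striking this balance between the deterministic avoidance and the randomization is the technical heart of the argument.
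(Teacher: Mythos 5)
Your overall architecture matches the paper's: a nested Cantor construction, an exhaustion of $\cf$ by the finite families of equations with bounded coefficients, affine invariance of each $f$ (using $m_0=\sum_i m_i$) to propagate avoidance from one scale to the next, and $\kappa_N$ taken to be the scale at which the relevant finite family first enters the construction. However, the step you yourself flag as the ``technical heart'' --- selecting the children at each stage by independent random shifts and controlling the bad placements by a union bound --- does not close, and this is precisely where the real content of the theorem lies. Count solutions: the equation $m_0x_0=\sum_{i=1}^v m_ix_i$ has $\asymp M^{v}$ solutions among the $M^{v+1}$ tuples of child slots inside a parent, so if you keep $K$ children chosen at random, the expected number of surviving nontrivial solutions is $\asymp M^{v}(K/M)^{v+1}=K^{v+1}/M$. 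This is below $1$ only when $K\lesssim M^{1/(v+1)}$, whereas full Fourier (or even Hausdorff) dimension forces $K=M^{1-o(1)}$. So at the density you need, a randomly selected family of children almost surely contains many forbidden tuples, and no union bound or acceleration of the scales $\delta_n$ can rescue it: the obstruction is the ratio $K/M$, not the absolute size of $\delta_n$.

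The missing ingredient is the deterministic Behrend-type construction (Proposition \ref{p-1} in the paper): a subset of $[M]$ of size $M e^{-O(\sqrt{\ln M})}=M^{1-o(1)}$ whose only zeros of the given equations are the constant tuples, obtained by placing digit vectors on a sphere and exploiting equality in the triangle inequality. Randomness then enters only through a single uniform translate of this fixed pattern per parent interval; because the equations are translation-invariant, the translate preserves the avoidance, while Shmerkin's theorem (Theorem \ref{t-A}) shows these translates alone suffice for the optimal Fourier decay. Two smaller gaps in your write-up: (i) restricting the avoidance condition to tuples of \emph{pairwise distinct} children misses zeros in which some coordinates share a child while others do not --- the paper's building block excludes \emph{every} non-constant zero, which is what makes the induction across scales work; and (ii) a tuple of diameter less than $\delta_{n-1}$ need not lie in a single level-$(n-1)$ interval unless adjacent intervals never both survive, which the paper arranges explicitly (Lemma \ref{l-1}(iii)).
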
 

\subsubsection{General linear patterns}\label{irrational}
\noindent The statements of Theorems \ref{t-2} and \ref{t-1} lead to an interesting possibility. Let $\cf_v(\mathbb R_{+})$ denote the class of translation-invariant linear functions in $(v+1)$ variables with real positive coefficients. Then $\cf_v(\mathbb R_{+})$ can be identified with the $(v-1)$-dimensional set 
\begin{equation}  \ct_{v} = \{ \mathbf t \in (0,1)^{v-1}: t_1 + t_2 + \cdots + t_{v-1} < 1\}, \label{def-T}  \end{equation}   which is a half-space of $\mathbb R^{v-1}$ restricted to the open unit cube, via the map
\begin{align*}  \mathbf t &= (t_1, \cdots, t_{v-1}) \in \ct \longmapsto f_{\mathbf t} \in \cf_v(\mathbb R_{+}), \text{ where } \\ f_{\mathbf t}(x) &=  x_0 - (t_1 x_1 + t_2 x_2 \cdots + t_v x_v), \qquad t_v = 1 - \sum_{i=1}^{v-1} t_i. \end{align*} 
Under this map, the class $\cf_v(\mathbb N)$ is identified with the positive rationals in $\ct_{v}$, and hence is of Hausdorff dimension zero. On the other hand, $\cf_v(\mathbb R_{+})$ is of positive $(v-1)$-dimensional Lebesgue measure. One is then led to ask: {\em{Given a collection $\overline{\cf} \subseteq \cf_v(\mathbb R_{+})$ that is of positive Lebesgue measure or large Hausdorff dimension under this identification, does there exist a set $E \subseteq \mathbb R$ of large Fourier dimension that avoids all nontrivial zeros of $\overline{\cf}$?}}
In our next two theorems, we answer this question in the affirmative, in the special case of trivariate equations, where $v=2$ and $\overline{\cf}$ can be viewed as a subset of $(0,1)$.  In Theorem \ref{t-4}, the class $\overline{\cf}$ is identified with a union of intervals, in Theorem \ref{t-6} with collections of badly approximable numbers.  

\begin{theorem}\label{t-4}
Let us fix any $p\in\nn$ with $p\ge2$ and any $\az\in(0,1)$.
Then there exists some $\kz=\kz(p,\az) > 0$ and $E\subseteq [0,1]$ with $\dim_\ff (E)\ge\az$
such that $E$
contains no nontrivial solution of 
$$tx+(1-t)y=z\quad\mbox{ for all }
\quad t\in\bigcup_{q=1}^{p-1}\Biggl(\frac qp-\kz,\frac qp+\kz\Biggr).$$
\end{theorem}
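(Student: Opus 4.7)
The plan is to reduce the continuous one-parameter family of equations $tx+(1-t)y=z$, with $t\in\bigcup_{q=1}^{p-1}(q/p-\kz,q/p+\kz)$, to the finite collection of rational linear equations $\{f_{q}=0\}_{q=1}^{p-1}$ drawn from $\cf_{2}(\nn)$, where $f_{q}(x,y,z):=qx+(p-q)y-pz$, and then to upgrade the Cantor-style avoidance construction used to prove Theorem~\ref{t-1} to one that excludes, around each plane $\{f_{q}=0\}$, a tubular neighborhood of fixed positive thickness. For the reduction, write $t=q/p+s$ with $|s|<\kz$. A direct rearrangement of $tx+(1-t)y-z=0$ gives
\[ qx+(p-q)y-pz \;=\; -\,ps\,(x-y), \]
so any nontrivial triple $(x,y,z)\in E^{3}$ satisfying $tx+(1-t)y=z$ for some $t\in(q/p-\kz,q/p+\kz)$ must obey
\[ |f_{q}(x,y,z)| \;=\; |qx+(p-q)y-pz| \;<\; p\kz\,|x-y| \;\le\; p\kz. \]
Thus it suffices to construct a set $E\subseteq[0,1]$ with $\dim_{\ff}(E)\ge\az$ such that $|f_{q}(x,y,z)|\ge p\kz$ for every $q\in\{1,\ldots,p-1\}$ and every $(x,y,z)\in E^{3}$ with pairwise distinct entries. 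Any common factor of the coefficients of $f_{q}$ can be divided out so as to realize each $f_{q}$ as a genuine primitive element of $\cf_{2}(\nn)$, the integer factor being harmlessly absorbed into $\kz$.

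The construction itself would adapt the proof of Theorem~\ref{t-1} applied to the finite family $\cg=\{f_{1},\ldots,f_{p-1}\}\subset\cf$. In that proof, $E$ is realized as a nested intersection $\bigcap_{k}E_{k}$ of unions of intervals of length $\dz_{k}$, with the combinatorial selection at each scale deleting any interval configuration $(I_{0},I_{1},I_{2})$ whose product intersects $\{f_{q}=0\}$ for some $q\in\cg$. Here one instead deletes every configuration whose product meets the larger slab $\{|f_{q}|<p\kz\}$. Because this slab has constant three-dimensional thickness (independent of $\dz_{k}$), the number of interval triples excluded at each stage is controlled by the same type of counting bound that drives Theorem~\ref{t-1}, with only an $O(\kz)$-perturbation of the combinatorial constants. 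Fourier decay of the natural measure $\mu$ on the resulting $E$, at rate $|\wh\mu(\xi)|\lesssim(1+|\xi|)^{-\az/2}$, is then established by the randomization (or deterministic Kaufman--Hambrook) scheme underlying Theorem~\ref{t-1}, calibrated to the weaker target $\az<1$; the parameter $\kz=\kz(p,\az)$ is chosen to balance the deletion rate against this Fourier-decay budget.

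The main obstacle is precisely this balancing. In the Theorem~\ref{t-1} setting each excluded surface has three-dimensional Lebesgue measure zero, so arbitrarily fast Fourier decay can be accommodated, including the full Salem rate $\dim_{\ff}(E)=1$. Here the thickened slabs $\{|f_{q}|<p\kz\}$ carry positive Lebesgue measure in $\rr^{3}$, so one cannot hope to achieve $\dim_{\ff}(E)=1$ while avoiding a $t$-set of positive Lebesgue measure; indeed Theorem~\ref{t-2} forces the presence of nontrivial zeros of some $f_{q}$ once $\dim_{\ff}(E)>2/3$ and the avoidance is not quantitatively thickened. Consequently $\kz$ must shrink as $\az\to 1$. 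The quantitative heart of the argument therefore lies in carefully counting how many basic intervals at each scale become ``contaminated'' by the thickened surfaces, and verifying that this contamination is compatible with Fourier decay of order $\az/2$ provided $\kz$ is chosen sufficiently small relative to $1-\az$; the explicit trade-off yields the allowable $\kz=\kz(p,\az)$.
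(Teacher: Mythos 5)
Your reduction contains a fatal weakening. From the identity $qx+(p-q)y-pz=-ps(x-y)$ you correctly conclude that a nontrivial solution with $t\in(q/p-\kz,q/p+\kz)$ forces $|f_q(x,y,z)|<p\kz|x-y|$, but you then discard the factor $|x-y|$ and declare it sufficient to build $E$ with $|f_q(x,y,z)|\ge p\kz$ for \emph{all} pairwise distinct triples. No set of positive Fourier dimension can satisfy that condition: such a set has positive Hausdorff dimension, hence is uncountable, hence contains three distinct points within distance $\eps$ of one another for every $\eps>0$, and for such a triple $|f_q(x,y,z)|=|q(x-z)+(p-q)(y-z)|\le 2p\eps$. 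The condition you actually need is the affinely invariant one $|f_q(x,y,z)|\ge p\kz\,|x-y|$, equivalently $|t-q/p|\ge\kz$ for $t=(z-y)/(x-y)$; retaining the factor $|x-y|$ is not cosmetic but is precisely what allows the condition to renormalize across scales.

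The same issue sinks the construction you sketch. Deleting at scale $\dz_k$ every triple of basic intervals whose product meets the slab $\{|f_q|<p\kz\}$ is not what the proof of Theorem \ref{t-1} does (there one \emph{selects} intervals indexed by a translated Behrend set, so that the surviving index set itself admits no nontrivial integer solutions), and it cannot work here: any triple of points lying in a common interval of length less than $\kz$ automatically satisfies $|f_q|<p\kz$, so at fine scales your prescription deletes everything. The paper's route (Lemma \ref{l-4} combined with the scheme of Section \ref{Cantor-construction-section}) instead exploits scale invariance: it fixes a single $M$ with $M\exp[-(5p+4)\sqrt{\ln M}]>M^{\az}$, builds one block $V_M=2pY_{\widetilde M}$ from a Behrend set avoiding $\{f_1,\dots,f_{p-1}\}$, proves via a digit and parity argument (see \eqref{pqm} and \eqref{pqm1}) that with $\kz=p/M$ any solution with $t$ in the forbidden union and points in $\mathbb I(V_M+\ell)$ is confined to a single basic interval, and then reuses the identical block at every scale, so that after each affine rescaling the same $\kz$-avoidance statement applies and the three points collapse to a single point in the limit. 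This is why $\kz$ is tied to the single-scale parameter $M$ and why the resulting dimension is $\ln(\#V_M)/\ln M>\az$ rather than $1$. Your closing heuristic that $\kz$ must shrink as $\az\to1$ is correct in spirit, but the appeal to Theorem \ref{t-2} is not: that theorem produces a nontrivial solution for \emph{some} rational $t$, not for one in a prescribed interval.
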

\vskip0.1in 
\noindent For fixed constants $0 < \tau, c \leq 1$, let us define the collection $\ce_{c, \tau}$ of badly approximable numbers as follows, 
\begin{equation} \label{badly-approximable} 
\ce_{c, \tau}:=\Bigl\{t\in(0,1):\
\Bigl|t-\frac qp\Bigr|>\frac c{p^{1+\tau}},\
\mbox{ for all }\frac qp\in\qq, p \in \nn, q \in \mathbb Z, \text{gcd}(p, q) = 1\Bigr\}.
\end{equation} 
Sets of this type have applications in number theory, and their sizes have been widely studied. For example if $\tau = 1$, then the Hausdorff dimension of $\ce_{c, \tau}$ is of the order of $1 - O_c(1)$ as $c \rightarrow 0$. We refer the reader to \cite[Theorem 1.3]{Simmons-2018} and the bibliography in this article for a survey of such results.  
\begin{theorem}\label{t-6}
For every $\eps_0 \in(0,\frac12)$, there exists a set $E\subseteq [0,1]$ with $\dim_\ff (E)=\frac 1{1+\tau}$
such that $E$
contains no nontrivial solution of 
$$tx+(1-t)y=z, \quad {\text{ for any }} t\in\ce_{c, \tau} \cap(\eps_0,1- \eps_0). $$
\end{theorem}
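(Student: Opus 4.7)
The plan is to construct $E$ as a subset of well-approximable numbers admitting a common sequence of scales of good rational approximation, and to exploit the resulting Diophantine structure to rule out nontrivial solutions of $tx + (1-t)y = z$ with $t \in \ce_{c,\tau} \cap (\eps_0, 1-\eps_0)$.

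\emph{Construction and Fourier dimension.} Set $\alpha := 2(1+\tau)$. Choose a rapidly growing integer sequence $\{N_k\}$ and a constant $c_0 > 0$. For each $k$, let $F_k$ be a union of intervals of radius $\delta_k := c_0 N_k^{-\alpha}$ around selected fractions $q/N_k$, the choice arranged so that $F_{k+1} \subseteq F_k$; set $E := \bigcap_k F_k$. Adapting Kaufman's construction of a Salem measure on the Jarnik--Besicovitch set of exponent $\alpha$, one obtains a probability measure $\mu$ supported on $E$ with $|\hat\mu(\xi)| \lesssim (1+|\xi|)^{-\beta/2}$ for every $\beta < 1/(1+\tau)$. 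This yields $\dim_\ff(E) \geq 1/(1+\tau)$, while the matching upper bound follows from $\dim_\ff \leq \dim_\hh$ and a covering estimate using the $N_k$ level-$k$ intervals of length $2\delta_k$.

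\emph{Avoidance.} Suppose distinct $x, y, z \in E$ satisfy $tx + (1-t)y = z$ with $t \in \ce_{c,\tau} \cap (\eps_0, 1-\eps_0)$; write $\eta := |x - y| > 0$, noting $\min(|z-y|, |x-z|) \geq \eps_0 \eta$. Since $x, y, z \in F_k$ for every $k$, there exist integers $a_k, b_k, c_k$ with $|x - a_k/N_k|, |y - b_k/N_k|, |z - c_k/N_k| \leq \delta_k$. Choose $k$ large enough that $\delta_k < \eps_0 \eta/10$; then $a_k, b_k, c_k$ are pairwise distinct and $|a_k - b_k| \leq 2 N_k \eta$. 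Writing $\epsilon_x := x - a_k/N_k$ and similarly for $y, z$, a direct expansion yields
\begin{align*}
\left| t - \frac{c_k - b_k}{a_k - b_k} \right|
&= \frac{\bigl|(\epsilon_z - \epsilon_y)(a_k - b_k) - (c_k - b_k)(\epsilon_x - \epsilon_y)\bigr|}{|x - y|\,|a_k - b_k|} \\
&\leq \frac{4 \delta_k}{\eta}.
\end{align*}
Reducing $(c_k - b_k)/(a_k - b_k)$ to lowest terms as $q/p$ gives $p \leq 2 N_k \eta$, and the hypothesis $t \in \ce_{c,\tau}$ requires $|t - q/p| > c/p^{1+\tau} \geq c/(2 N_k \eta)^{1+\tau}$. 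Combining with $\delta_k = c_0 N_k^{-2(1+\tau)}$, the consistency of both bounds forces $4 \cdot 2^{1+\tau} c_0\,\eta^\tau \geq c N_k^{1+\tau}$; since $\eta \leq 1$ is fixed and $N_k \to \infty$, this inequality fails for all sufficiently large $k$, delivering the desired contradiction.

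\emph{Main obstacle.} The principal technical burden is carrying out the Kaufman-type Fourier analysis for the measure $\mu$ on the nested intersection $\bigcap_k F_k$ in a manner that attains Fourier decay at rate $|\xi|^{-1/(2(1+\tau))}$; this requires exponential sum estimates over fractions with denominator $N_k$, combined with a judicious choice of the growth rate of $\{N_k\}$ and of the numerators retained at each stage so as to simultaneously preserve the nested structure required by the avoidance argument. The Diophantine computation above is by contrast short and transparent, relying only on the existence of common rational approximations of $x, y, z$ at every scale of the construction.
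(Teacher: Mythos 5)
Your avoidance argument is clean and the Diophantine computation is correct, but the construction you propose cannot yield the claimed Fourier dimension, and the issue is not merely the unworked-out Kaufman estimate you flag at the end---it is a dimension mismatch that is visible at the outset.

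You take $\alpha = 2(1+\tau)$ and at level $k$ place at most $N_k$ intervals of length $2\delta_k \sim N_k^{-\alpha} = N_k^{-2(1+\tau)}$, namely one interval for each selected fraction $q/N_k$ with a \emph{single common denominator} $N_k$. A covering by these intervals gives
\[
\dim_\hh(E) \;\le\; \limsup_k \frac{\log N_k}{\log\bigl(N_k^{\,2(1+\tau)}\bigr)} \;=\; \frac{1}{2(1+\tau)},
\]
which is half of the target $1/(1+\tau)$. By Frostman's inequality $\dim_\ff \le \dim_\hh$, the Fourier decay $|\widehat{\mu}(\xi)|\lesssim (1+|\xi|)^{-\beta/2}$ for $\beta$ close to $1/(1+\tau)$ that you assert is therefore impossible for this measure. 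The point you have elided is that the Jarn\'ik--Besicovitch set of exponent $\alpha$ has dimension $2/\alpha$ precisely because at scale $Q^{-\alpha}$ one uses \emph{all} $\sim Q^2$ fractions $p/q$ with $q\le Q$; a single-denominator Cantor subset uses only $N_k$ fractions per unit interval at level $k$ and hence has dimension $1/\alpha$, not $2/\alpha$. Keeping your exponent $\alpha = 2(1+\tau)$ therefore halves the dimension, and the final clause of your avoidance computation---contradiction from $N_k\to\infty$---is an artifact of this overly thin construction.

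If one repairs the exponent by taking $\alpha = 1+\tau$, so that $\delta_k \sim N_k^{-(1+\tau)}$, then the Hausdorff dimension becomes $1/(1+\tau)$ and your Diophantine inequality still closes, but now the $N_k$ factors cancel and the contradiction becomes $c < 4\cdot 2^{1+\tau}c_0\eta^\tau$, which forces you to take $c_0$ small and drop the appeal to $N_k\to\infty$. More seriously, the Fourier-decay part is no longer a deferred technicality: Kaufman's and Bluhm's estimates crucially average over a \emph{range} of prime denominators $q\in[M_k,2M_k]$ at each scale, and that averaging is what kills the bad frequencies. With a single $N_k$ per level, the Cantor measure will in general exhibit spikes of $\widehat{\mu}$ near multiples of $N_1\cdots N_k$ (as with the middle-thirds measure at powers of three), and there is no reason an unspecified ``judicious'' deterministic choice of numerators removes them. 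The paper avoids this entirely: its building block (Lemma~5.1, i.e.\ Lemma~\ref{l-5}) is the arithmetic progression $W_M = \{0, R, 2R, \ldots, (N-1)R\}$ with $R\sim N^\tau$, $N\sim M^{1/(1+\tau)}$, whose avoidance property follows from the same kind of Diophantine argument you use, and the Fourier decay is supplied by Shmerkin's random-translation theorem (Theorem~\ref{t-A}), which asserts that randomizing the translates $\ell_{\mathbf j}$ at every level yields optimal decay almost surely without any exponential-sum estimates. Randomization (or, alternatively, averaging over many denominators \`a la Kaufman/Bluhm) is not optional here; it is the ingredient that makes the Fourier-decay half of the theorem go through, and it is missing from your proposal.
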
 
\noindent The combined strategies of Theorems \ref{t-4} and \ref{t-6} imply the following corollary.
\begin{corollary}\label{t-7}
Let us fix $0 < \tau, c \leq 1$, $\eps_0\in(0,\frac12)$ and $p\in\nn \setminus \{1\}$.
Then for all sufficiently large $M\in\nn$ and $\kz=\frac1{2pM}$,
there exists $E\st [0,1]$ with $\dim_\ff E=\frac 1{1+\tau}$
such that $E$
contains no nontrivial solution of 
$$tx+(1-t)y=z, \quad \text{ for any } t\in \bigl[\cec\cap(\eps_0,1-\eps_0) \bigr]\cup \Biggl[\bigcup_{q=1}^{p-1} \Bigl(\frac qp-\kz,\frac qp+\kz \Bigr) \Biggr]. $$
\end{corollary}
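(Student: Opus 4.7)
The plan is to merge the two randomized constructions underlying Theorems \ref{t-4} and \ref{t-6} into a single Cantor-style procedure whose generation-by-generation exclusion list covers both families of forbidden parameters $t$. I would begin with the Theorem \ref{t-6} construction (since its Fourier dimension $1/(1+\tau)$ already matches the target of the corollary) and augment the exclusion rule at each stage with a discretization of the rational-neighborhood union from Theorem \ref{t-4}.

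Concretely, at stage $k$ the forbidden $t$-list from Theorem \ref{t-6} is a finite set $T_k^{(1)}$ that approximates $\cec\cap(\eps_0,1-\eps_0)$ at scale $2^{-k}$, and the construction removes, with high probability, any triple $(x,y,z)$ satisfying $tx+(1-t)y=z$ with $t\in T_k^{(1)}$. I would add a parallel-scale discretization $T_k^{(2)}$ of $\bigcup_{q=1}^{p-1}\bigl(\frac{q}{p}-\kz,\frac{q}{p}+\kz\bigr)$ and perform the analogous exclusion for $t\in T_k^{(2)}$. Since $\kz=1/(2pM)$ and there are only $p-1$ rational intervals, the number of discretization points satisfies $|T_k^{(2)}|\le C(p)M^{-1}\,|T_k^{(1)}|$ when the discretization scales are taken comparable. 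For $M$ sufficiently large, $|T_k^{(1)}\cup T_k^{(2)}|\le 2|T_k^{(1)}|$, so the probabilistic large-deviation bounds that drive Theorem \ref{t-6}'s Fourier decay estimate carry through with at most a constant loss. A Borel--Cantelli argument then produces an $E$ that, with positive probability, simultaneously achieves $\dim_\ff E=\frac1{1+\tau}$ and contains no nontrivial solution $tx+(1-t)y=z$ for any $t\in\bigcup_k(T_k^{(1)}\cup T_k^{(2)})$. A standard continuity argument, identical to the one that passes from a discretization of $\cec$ to $\cec$ itself in Theorem \ref{t-6}, extends this avoidance from the dense discrete set to the full target union (note that the rationals $\frac{q}{p}$ lie outside $\cec$ trivially, so the two sets do not interfere with each other except through compatible discretization).

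The main obstacle is quantitatively verifying that merging the two exclusion lists does not push the expected surviving mass at stage $k$ below the threshold needed for Theorem \ref{t-6}'s Fourier decay estimate. This reduces to choosing $M=M(p,\tau,c,\eps_0)$ large enough so that the extra mass removed on account of $T_k^{(2)}$ is a small perturbation of the mass already removed on account of $T_k^{(1)}$. The choice $\kz=1/(2pM)$ in the statement is tailored precisely to enable this estimate: it makes each rational interval narrow enough that the additional bad events have probability summable in $k$, so the union-bound step in the probabilistic construction remains valid. Once this quantitative comparison is in place, the rest of the argument is a routine reassembly of the two proofs' technical machinery, and the conclusion follows.
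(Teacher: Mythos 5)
There is a genuine gap: your proposal rests on a mechanism that does not exist in either of the proofs you are trying to merge. In Theorems \ref{t-4} and \ref{t-6} the avoidance is entirely \emph{deterministic} — it is hard-wired into the discrete building blocks ($V_M$ from Lemma \ref{l-4}, respectively $W_M$ from Lemma \ref{l-5}), whose defining property is that any solution triple inside $\mathbb I(V_M+\ell)$ or $\mathbb I(W_M+\ell)$ must collapse into a single basic interval. The randomness (the uniform translates $\ell_{\mathbf j}$) plays no role in avoidance; it is used solely to invoke Theorem \ref{t-A} and obtain the Fourier decay, and the hypotheses of that theorem concern only the sizes $L_n,M_n$, not any count of excluded triples. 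Consequently there is no ``exclusion list'' $T_k^{(1)}$, no probabilistic removal of solution triples, no large-deviation or Borel--Cantelli step to perturb, and the quantitative comparison $|T_k^{(2)}|\le C(p)M^{-1}|T_k^{(1)}|$ that you present as the crux does not attach to anything in the actual argument. The real content of the corollary — and the piece missing from your proposal — is the construction of a \emph{single} discrete set that partially avoids both coefficient families at once. The paper does this (Lemma \ref{l-6}) by composing the two ideas rather than superposing them: take the Behrend-type set $Y_N$ of Proposition \ref{p-1}, which avoids the rational coefficients $q/p$ with the quantitative gap \eqref{inf1}, and dilate it by $R\approx 2pN^{\tau}/c$, so that the spacing $R$ defeats the badly approximable $t\in\cec$ exactly as in Lemma \ref{l-5} while the Behrend structure of the underlying $Y_N$ defeats the $t$ near $q/p$. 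The parameter $\kz=\frac1{2pM}$ enters this deterministic carry/digit analysis (it forces $\kappa N<1/2$ so the perturbation term cannot bridge the gap $R/p$), not any probabilistic summability.

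A second, independent problem is your closing ``standard continuity argument'' passing from a discretization of $\cec$ to $\cec$ itself. No such step occurs in Theorem \ref{t-6} (the diophantine inequality $|t-q/p|>c/p^{1+\tau}$ is applied directly to every $t\in\cec$), and it would not work as stated: if $t_n\to t$ and $(x_n,y_n,z_n)$ are nontrivial solutions for $t_n$ in the compact set $E$, a limit point of the triples may be diagonal, i.e.\ a \emph{trivial} solution for $t$, so avoidance on a dense subset of coefficients does not transfer to the closure without uniform lower bounds on the diameter of nontrivial solutions (this is precisely the subtlety isolated in Theorem \ref{t-3}). To repair the proposal you would need to replace the probabilistic merging by the construction of a set such as $Z_M=RY_N$ and verify its two-sided partial avoidance property directly.
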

\noindent The sets of forbidden coefficients $t$ in Theorems \ref{t-4} and \ref{t-6} are large, as a consequence of which the avoiding sets we obtain are not of full dimension. {\em{Is it possible to construct a full-dimensional Salem set for which the set of forbidden coefficients is still quantifiably large?}} Our next result provides an affirmative answer to this question, while also addressing the question posed after Theorem \ref{t-1}. 
\begin{theorem}\label{t-5}
There exists an infinite set $\mathfrak C \subseteq (0,1)$ 
and $E\st [0,1]$ with $\dim_\ff E=1$
such that $E$
contains no nontrivial solution of 
\begin{equation} t x+(1-t)y=z \quad \text{ for any } t \in \mathfrak C. \label{t0-eqn} \end{equation} 
The set $\mathfrak C$ contains infinitely many rationals and uncountably many irrationals. 
\end{theorem}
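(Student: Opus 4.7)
The plan is to combine the multi-scale random Cantor construction underlying Theorems~\ref{t-1} and~\ref{t-3} with the neighborhood-thickening idea of Theorem~\ref{t-4}, iterated across a countable schedule. First I would enumerate an infinite sequence of rationals $\{r_n = q_n/p_n\}_{n \geq 1} \subseteq (0,1)$ in lowest terms (say with $p_n$ nondecreasing), attaching to each $r_n$ a pair of parameters: a Cantor scale $\eps_n \downarrow 0$ (to be chosen very rapidly, e.g.\ $\eps_{n+1} = \eps_n^{1+1/n}$) and a neighborhood radius $\kz_n > 0$. These will govern the $n$-th stage of the construction.

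I would then build $E$ as the intersection of a nested family $E_0 \supseteq E_1 \supseteq \cdots$, where $E_n$ is a union of disjoint intervals of length $\eps_n$ selected from a dyadic-like grid by a randomized procedure in the spirit of \cite{LP09}. At stage $n$ I would impose the constraint that no triple $(x,y,z) \in E_n^3$ of distinct points satisfies $tx+(1-t)y=z$ for any $t \in (r_n - \kz_n, r_n + \kz_n) \cap (0,1)$. The constraints from earlier stages $i < n$ are preserved automatically by the nesting $E_n \subseteq E_i$, while the new $n$-th constraint is enforced by an argument in the spirit of Theorem~\ref{t-4} at scale $\eps_n$: an irrational $t = r_n + \eta$ with $|\eta| < \kz_n$ yields the identity $q_n x + (p_n-q_n) y - p_n z = p_n\eta(y-x)$, so choosing $p_n \kz_n$ strictly smaller than the minimal nonzero value of the left-hand side achievable at the grid scale $\eps_n$ forces the rational equation to hold exactly. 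Rational exact solutions are then ruled out at stage $n$ as in the proof of Theorem~\ref{t-1}, collapsing any putative irrational solution to a trivial one. In this way one additional avoidance demand is added per stage, and only a controlled fraction of the intervals is removed.

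Once the construction is complete, I would set $\mathfrak C := \bigcup_{n\ge 1} (r_n-\kz_n, r_n+\kz_n) \cap (0,1)$, which contains every $r_n$ (hence infinitely many rationals) and the uncountably many irrationals in each open interval. To verify $\dim_\ff E = 1$, I would carry out the probabilistic Fourier analysis of \cite{LP09} and Theorem~\ref{t-1}: because the per-stage deletion ratio and the growth rate of the collection of forbidden configurations are negligible relative to the rapidly shrinking scales $\eps_n$, the random measure $\mu$ on $E$ satisfies $|\hat\mu(\xi)| \le C_\bz (1+|\xi|)^{-\bz/2}$ for every $\bz<1$, almost surely.

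The main obstacle will be the simultaneous calibration of $\eps_n$ and $\kz_n$, together with the branching and deletion parameters of the random Cantor construction, so that (i)~the quantitative transfer from the rational equation to its $\kz_n$-neighborhood is effective at scale $\eps_n$ uniformly across stages, (ii)~the cumulative Fourier decay exponent still approaches~$1$ despite infinitely many avoidance rounds, and (iii)~the relevant tail events have summable probabilities so that a Borel--Cantelli argument produces a single realization simultaneously meeting every threshold $\bz<1$. Balancing these three competing demands across all stages is where I expect the bulk of the technical work to lie.
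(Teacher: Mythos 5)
There is a genuine gap at the heart of your scheme: the claim that ``the constraints from earlier stages $i<n$ are preserved automatically by the nesting $E_n\subseteq E_i$.'' Each $E_i$ is a union of intervals of positive length $\eps_i$, and an interval is convex, so every single interval of $E_i$ contains nontrivial solutions of $tx+(1-t)y=z$ for \emph{every} $t\in(0,1)$. Consequently the constraint you propose to impose at stage $n$ (``no triple of distinct points in $E_n^3$ solves the equation for $t$ near $r_n$'') is unachievable for any finite-stage set; the most any single stage can deliver is the \emph{partial} avoidance of Lemma \ref{l-4}: points from distinct intervals cannot form a solution, while points inside one interval are unconstrained. Full avoidance for a given coefficient window is obtained only by re-enforcing the \emph{same} window at \emph{every} subsequent scale, which is exactly how Theorems \ref{t-1} and \ref{t-4} work. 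If the window around $r_n$ is enforced only once, at scale $\eps_n$, then a solution with $t$ near $r_n$ living inside a single interval of $E_n$ survives all later stages (which enforce only the other windows) and persists into $E$.

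Trying to repair this by enforcing all windows $i\le n$ at stage $n$ runs into a quantitative obstruction: Lemma \ref{l-4} at scale $M$ only avoids windows of radius $\kz=p/M$ around the $p$-adic rationals, so the radius you can protect around a fixed $r_i$ \emph{shrinks} as the construction descends to finer scales. A union $\bigcup_n(r_n-\kz_n,r_n+\kz_n)$ with radii fixed once and for all is therefore out of reach at full dimension --- this is precisely the tradeoff visible in Theorem \ref{t-4}, where $\kz(p,\az)\to0$ as $\az\to1$. The paper's resolution is to make the coefficient set a \emph{nested intersection} rather than a union: $\mathfrak C=\bigcap_n\mathfrak C_n$, where $\mathfrak C_n$ consists of intervals of radius $2^{-R_n}$ around dyadic rationals of denominator $2^{R_{n-1}+2}$, calibrated so that the building block $V_{N_n}$ of Proposition \ref{C-construction-prop} partially avoids all of $\mathfrak C_n\supseteq\mathfrak C$. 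Then every level of the Cantor construction handles the entire set $\mathfrak C$ at once, and a separate argument (Lemma \ref{D-infinite-lemma}) shows that this intersection, despite having empty interior, still contains infinitely many rationals and uncountably many irrationals. A secondary issue in your sketch is that with $N_n$ growing as fast as the avoidance estimates require, taking $M_n=N_n$ violates \eqref{log1}; the paper repeats each scale $N_m$ with a carefully chosen multiplicity $b_m$ to restore that hypothesis before invoking Theorem \ref{t-A}.
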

\noindent It is natural to ask whether there exists a version of Shmerkin's theorem \cite{S17} or Theorem \ref{t-1} for a finite but arbitrary collection of equations in $\cf_v(\mathbb R_{+})$; for instance, does there exist a full-dimensional Salem set $E$ that contains no nontrivial solution of $tx+(1-t)y=z$, for any pre-specified irrational $t\in(0,1)$? We are currently unable to provide an answer to this question. Also, the proof techniques of this paper are not immediately generalizable to other types of translation invariant equations, for example when \[ \sum_{i=1}^{m}s_i x_i  = \sum_{j=1}^{n}t_j y_j  \text{ with } \sum_{i=1}^{m}s_i  = \sum_{j=1}^{n} t_j=1, \quad 0 < s_i, t_j < 1 \text{ and } m, n \geq 2, \] 
or when the equation is nonlinear, say $x_3 - x_1 = (x_2- x_1)^2$. We hope to pursue these directions in future work.      

\subsection{Proof overview and layout of the article} 
Predictably, the proof of our ``non-avoidance'' result Theorem \ref{t-2} is very different from its ``avoidance" counterparts (Theorems \ref{t-1}, \ref{t-4}, \ref{t-6} and \ref{t-5}). We start with the former. Let us recall from \eqref{def-T} the definition of the set $\ct_{v}$, and set 
\begin{equation} \ct^{\ast}_v := \Bigl\{ (t_1, \ldots, t_v) \in  (0,1)^v:  t_1 + t_2 + \cdots + t_v =1 \Bigr\}. \label{def-Tstar} \end{equation}
Thus $\ct_{v}$ is the projection of $\ct_{\nu}^{\ast}$ onto the first $(v-1)$ coordinates.  
Given a probability measure $\mu$ supported on $E$ with strong enough Fourier decay, and any vector $\mathbf t = (t_1, \ldots, t_v) \in \ct^{\ast}_v$,
 we construct in Section \ref{measure-pattern-section} a measure $\Lambda_{\mathbf t}$, which, if nontrivial, would signal existence of nontrivial zeros of the function $f_{\mathbf t} (x_0, \cdots, x_v) = x_0 - (t_1 x_1 + \cdots + t_v x_v)$ in $E$. The precise statement may be found in Proposition \ref{p-2}. The construction of the measure $\Lambda_{\mathbf t}$ and verification of its support properties form the main content of this section. The family of measures $\{ \Lambda_{\mathbf t} : \mathbf t \in \ct^{\ast}_v \}$ is then used in Section \ref{positive-proof-section} to create an auxiliary function $F$ on $\ct^{\ast}_v$. On one hand, $F$ will be continuous on $\ct^{\ast}_v$. On the other, it will vanish at every point $\mathbf t \in \ct_{v}^{\ast}$ with rational entries, provided $E$ avoids all nontrivial zeros of $\cf$. This would then force $F$ to be identically zero on its domain. As we will see in Section \ref{proof-t-2}, this leads to a contradiction to the assumption that $\mu$ has nontrivial mass. The proof of Theorem \ref{t-2} appears here.   
\vskip0.1in
\noindent There are two basic themes underpinning Theorems \ref{t-1}, \ref{t-3}, \ref{t-4}, \ref{t-6}, \ref{t-5} and Corollary \ref{t-7}. Each of their proofs consists of two parts. The first step involves constructing an avoiding set in the integers that can be easily transferred to the continuum. The resulting set in the continuum is a disjoint union of intervals; as such, it  cannot completely avoid nontrivial zeros of translation-invariant linear functions. However, it inherits a partial avoidance feature from its discrete counterpart, in the sense that points from distinct intervals cannot form a nontrivial zero. This partial avoidance feature is later replicated on many scales to achieve full avoidance. 
\vskip0.1in
\noindent Constructions of subsets of integers that have large cardinality and avoid linear patterns abound in the combinatorial and number-theoretic literature, and constitute an independent research direction in its own right. A classical construction of this type is due to Behrend \cite{B46}, who obtained a large progression-free set in the integers. The seminal work of Ruzsa \cite{{Ruzsa1},{Ruzsa2}} has also led to numerous constructions and applications in the discrete setting. Refinements of Behrend's construction play an important role in our paper, specifically in the proofs of Theorems \ref{t-1}, \ref{t-3}, \ref{t-4} and \ref{t-5}. The precise statement concerning the existence of a Behrend-like set appears in Proposition \ref{p-1}. With slight variations, this construction is then lifted to the continuum in Lemmas \ref{l-1} and \ref{l-4} and in Proposition \ref{C-construction-prop}. The sets produced by this latter group of results are the unions of intervals that partially inherit the avoidance feature of their Behrend-type parents. They serve as building blocks for sets in the continuum that fully avoid certain linear patterns. For instance, Lemma \ref{l-1} leads to Theorems \ref{t-1} and \ref{t-3}, while Lemma \ref{l-4} and Proposition \ref{C-construction-prop} imply Theorems \ref{t-4} and \ref{t-5} respectively. The discrete avoiding sets that underlie Theorem \ref{t-6} and Corollary \ref{t-7}, on the other hand, use number-theoretic properties of $\mathcal E_{c, \tau}$, the forbidden class of coefficients consisting of badly approximable numbers. These building blocks appear in Lemmas \ref{l-5} and \ref{l-6} respectively. 
\vskip0.1in
\noindent The discrete constructions described above ensure avoidance of certain linear patterns in their continuum analogues, but apriori ensure no Fourier dimensionality, which is a critical requirement in our work. The second fundamental theme in our avoidance theorems is a more recent idea of Shmerkin \cite{S17} that embeds random translates of copies of Behrend-like sets in a Cantor-type construction. As we see in \cite[Theorem 2.1]{S17} (quoted in this article in Theorem \ref{t-A}), these random translations ensure optimal Fourier decay of the resulting Cantor measure subject to dimension, under very mild restrictions on the construction parameters. While there are many instances in the literature where randomization induces optimal Fourier decay \cite{{Bluhm-1}, {Bluhm-2}, {Kahane-1}, {Kahane-2}, {LP09}, {Salem}, {S17}}, the specific use of random translates retains the avoidance property of the set regarding the linear equations, which themselves are translation-invariant. As we demonstrate in the proofs of our theorems, these two strategies are flexible, robust, interact well with each other, and can be adapted in a number of ways to generate algorithms for many avoiding sets of large Fourier dimension.     

\subsection{Acknowledgements} This work was initiated in the academic year 2017-2018, when YL was visiting University of British Columbia on a study leave from Beijing Jiaotong University, funded by a scholarship from China Scholarship Council. He would like to thank all three organizations for their support that enabled his visit. YL was also supported by National Natural Science Foundation of China (Grant nos.~11601028, 11771446 and 1971402) and the Fundamental Research Funds for the Central Universities of China (Grant nos.~2019RC014). MP was partially supported by a 2018 Wall Scholarship from the Peter Wall Institute of Advanced Study, a 2019 Simons Fellowship and two NSERC Discovery grants. 


\section{Identification of linear patterns via measures} \label{measure-pattern-section}
\subsection{A measure on the set of solutions of a linear equation} 
For $\ct^{\ast}_v$ as in \eqref{def-Tstar}, let $\mathbf t = (t_1, \ldots, t_v) \in \ct^{\ast}_v$. We consider the translation-invariant linear equation 
\begin{equation}  \label{n+1-linear-equation} x_{v+1} = t_1 x_1 + \cdots + t_v x_v. \end{equation}   
Given a closed set $E \subseteq [0,1]$, our goal in this section is to construct a measure $\Lambda_{\mathbf t}$ which, if nontrivial, would imply the existence of nontrivial solutions of \eqref{n+1-linear-equation} in $E$. We follow an idea initially introduced in \cite{LP09}, and explored further in \cite{{CLP}, {HLP}}. Unlike previous work, which used measures like $\Lambda_{\mathbf t}$ to establish existence of specific configurations, our final goal is to apply the entire family of measures $\{\Lambda_{\mathbf t} : \mathbf t \in \ct^{\ast}_v \} $ towards a contradiction, in the proof of Theorem \ref{t-2}, establishing in the process that not all these measures can be simultaneously trivial. This proof appears in Section \ref{proof-t-2}.  
 \vskip0.1in
\noindent  Let us define \[ X_{\mathbf t}(E) := \Bigl\{ (x_1, \cdots, x_v) \in E^v : \sum_{i=1}^{v} t_i x_i \in E \Bigr\} \subseteq E^v. \] The set $X_{\mathbf t}(E)$ is always nonempty, since diagonal vectors of the form $(x, \cdots, x) \in E^v$ always lie in $X_{\mathbf t}(E)$. However, for an arbitrary Borel set $E$, the set $X_{\mathbf t}(E)$ need not contain any nontrivial solution of \eqref{n+1-linear-equation} in general. Nonetheless, with some additional assumptions on $E$, it is possible to endow $X_{\mathbf t}(E)$ with a certain non-negative Borel measure. Apriori, this measure could be trivial, i.e., it is possible to have $\Lambda_{\mathbf t}(X_{\mathbf t}(E)) = 0$. However, if nontrivial, this measure assigns zero mass to the trivial solutions of \eqref{n+1-linear-equation}. The construction of this measure is the main content of Proposition \ref{p-2} below, which is a main ingredient in the proof of Theorem \ref{t-2}. 
\vskip0.1in
\noindent Let us fix a function $\psi\in C_{c}^{\infty}(\mathbb R)$ with $\psi\geq 0$, supp$(\psi) \subseteq [-1,1]$ and $\int \psi=1$. Let $\mu$ be a probability measure supported on $E$. Set 
\[ \psi_\varepsilon(x):=\frac{1}{\eps}  \psi\left(\frac{x}{\varepsilon}\right) \quad \text{ and } \quad  \mu_\eps:=\mu*\psi_\varepsilon \text{ for } \varepsilon > 0. \] Then the measure $\mu_{\eps}$ is a smooth probability density  supported on $\mathcal N_{\eps}[E]$, the $\eps$-neighbourhood of $E$. With these definitions in place, we introduce a measure $\Lambda_{\mathbf t}^{(\eps)}$ for every $\eps > 0$ and every $\mathbf t \in (0,1)^v$: 
\begin{equation} \label{nute-def}
\la\Lambda_{\mathbf t}^{(\varepsilon)}, \mathtt f\ra:= \int_{[0,1]^{v}} \mathtt f(x_1,\cdots,x_v) \; \mu_\eps\Bigl(\sum_{i=1}^v t_ix_i \Bigr)\, \prod_{i=1}^{v} \mu_{\varepsilon}(x_i) \; dx_1 \cdots dx_v, \quad \mathtt f \in C([0,1]^v).
\end{equation} 
For each fixed $\eps > 0$, the measure  $\Lambda_{\mathbf t}^{(\eps)}$ is clearly non-negative, absolutely continuous with respect to the Lebesgue measure on $[0,1]^v$, and supported on the set $X_{\mathbf t}(\mathcal N_{\eps}[E])$. Using these, we  
define a linear functional $\Lambda_{\mathbf t}$ that acts apriori on the vector space $\mathcal V$, consisting of Schwartz functions $\mathtt f: \mathbb R^{v}\rightarrow\rr$ whose Fourier transform $\widehat{\mathtt f}: \mathbb R^v \rightarrow \mathbb C$ is smooth of compact support in $\mathbb R^v$: 
\begin{equation}\label{2.7}
\la\Lambda_{\mathbf t}, \mathtt f\ra:=\lim_{\eps\to 0} \la\Lambda_{\mathbf t}^{(\varepsilon)}, \mathtt f\ra. 
\end{equation}
Our main task is to show that under certain assumptions on the underlying measure $\mu$, the limit in \eqref{2.7} exists, that the functional $\Lambda_{\mathbf t}$ above is well-defined and can be identified with a non-negative finite measure on $X_{\mathbf t}(E)$. 
\begin{proposition}\label{p-2}
Let $v \in\nn$, $v \ge2$. Suppose that $\mu$ is a probability measure supported on a closed set $E \subseteq [0,1]$ satisfying the Fourier decay condition \eqref{star} 
for some $\bz>\frac1{v+1}$ and some positive constant $C$. 
\begin{enumerate}[(a)]
\item \label{nut-parta} If $\mathtt f \in \mathcal V$, then for every $\mathbf t \in (0,1)^v$ the limit in (\ref{2.7}) exists and equals
\begin{align}\label{fab}
\la\Lambda_{\mathbf t}, \mathtt f\ra = \int_{\rr^{v+1}}\widehat\mu(\xi)\widehat\mu(\eta_1) \cdots\widehat\mu(\eta_v)
\widehat{\mathtt f} \left(-\eta_1-t_1\xi,\cdots, -\eta_v-t_v\xi\right)
\,d\xi \,d\eta_1\cdots \,d\eta_v. 
\end{align}
The integral in \eqref{fab} is absolutely convergent. 
\vskip0.1in
\item \noindent  \label{nut-partb} For every $\mathbf t \in (0,1)^v$, there exists a positive, finite constant $C_0(\mathbf t)$ depending only on $C$ and $\mathbf t$ such that for all $\mathtt f \in \mathcal V$, 
\begin{equation}\label{2.8}
\sup_{\eps > 0} \bigl |\la\Lambda_{\mathbf t}^{(\eps)}, \mathtt f\ra \bigr|\leq C_0(\mathbf t) \, \|\mathtt f\|_\infty \quad \text{ and hence } \quad |\la\Lambda_{\mathbf t}, \mathtt f\ra|\leq C_0(\mathbf t) \,  \| \mathtt f\|_\infty. 
\end{equation}
In particular, if $\mathbf t = (s, t, \cdots, t) \in (0,1)^v$ with $s + (v-1)  t = 1$, then the function 
\begin{equation} s \longmapsto C_0 \Bigl( s, \frac{1-s}{v-1}, \cdots, \frac{1-s}{v-1} \Bigr)  \text{ is integrable on $[0,1]$.} \label{t-diagonal} \end{equation} 
\item \label{nut-partc} There exists a non-negative, bounded linear functional $\Lambda_{\mathbf t}$ on $C([0,1]^v)$ such that 
$\Lambda_{\mathbf t}^{(\eps)} \rightarrow \Lambda_{\mathbf t}$ weakly as $\eps \rightarrow 0$, for every $\mathbf t \in (0,1)^v$. In other words, 
\begin{equation} \label{def-nu}   \la\Lambda_{\mathbf t}, \mathtt f\ra:= \lim_{\eps \rightarrow 0} \big\langle \Lambda_{\mathbf t}^{(\eps)}, \mathtt f \big\rangle \text{ exists for every $\mathtt f \in C([0,1]^v)$.} \end{equation}  
The estimate \eqref{2.8} continues to hold for all $\mathtt f \in C([0,1]^v)$. The definition \eqref{def-nu} agrees with \eqref{2.7} and \eqref{fab} for $\mathtt f \in C([0,1]^v) \cap \mathcal V$.
\item \label{nut-partd} The functional $\Lambda_{\mathbf t}$ introduced in part \eqref{nut-partc} is given by integration against a non-negative finite Radon measure, also denoted $\Lambda_{\mathbf t}$. With this identification and assuming that $\Lambda_{\mathbf t} \not\equiv 0$, the following conclusions hold: 
\begin{equation}\label{2.2}
\supp(\Lambda_{\mathbf t}) \subset X_{\mathbf t}(E) :=\Bigl\{(x_1,\ldots,x_v)\in E^v :\sum_{i=1}^v t_ix_i\in E\Bigr\},
\end{equation}
and  for any two indices $i,j\in\{1,\ldots,v\}$ with $i\neq j$,
\begin{equation}\label{2.3}
\Lambda_{\mathbf t} \left(\left\{(x_1,\ldots,x_v)\in[0,1]^{v}:\ x_i=x_j\right\}\right)=0.
\end{equation}
\end{enumerate} 
\end{proposition}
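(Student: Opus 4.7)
The proof decomposes into four parts, with parts (a)--(c) following a standard Fourier-inversion and weak-convergence template, and the diagonal estimate in (d) being the main technical obstacle.

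For part (a), my plan is to unfold the definition \eqref{nute-def} by Fourier-inverting each $\mu_\eps$ factor. Writing $\mu_\eps(y) = (2\pi)^{-1}\int \widehat{\mu_\eps}(\zeta) e^{iy\zeta}\, d\zeta$ once for $y = \sum_i t_i x_i$ (with dual variable $\xi$) and once for each $y = x_i$ (with dual variable $\eta_i$), then swapping the order of integration, produces the identity \eqref{fab} with $\widehat{\mu_\eps}$ in place of $\widehat\mu$. To justify the interchange and the subsequent passage $\eps \to 0$ by dominated convergence (using $|\widehat{\mu_\eps}| \le |\widehat\mu|$), I would check absolute convergence of the limiting integral: the compactness of $\supp(\widehat{\mathtt f})$ restricts each $\eta_i$ to an $O(1)$ box around $-t_i\xi$, so on the support of the integrand the Fourier decay hypothesis forces a bound of $(1+|\xi|)^{-(v+1)\bz}$, integrable precisely because $(v+1)\bz > 1$.

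Part (b) follows by combining the trivial pointwise estimate $|\la \Lambda_{\mathbf t}^{(\eps)}, \mathtt f\ra| \le \|\mathtt f\|_\infty \Lambda_{\mathbf t}^{(\eps)}(\rr^v)$ with a Fourier-side bound on the total mass; Fourier-inverting only the single factor $\mu_\eps(\sum t_i x_i)$ in $\int \mu_\eps(\sum t_i x_i)\prod \mu_\eps(x_i)\, dx$ produces
\begin{equation*}
C_0(\mathbf t) := \frac{C^{v+1}}{2\pi} \int_{\rr} (1+|\xi|)^{-\bz}\prod_{i=1}^{v}(1+|t_i\xi|)^{-\bz}\, d\xi
\end{equation*}
as an upper bound, finite by the same exponent computation as in (a). For \eqref{t-diagonal}, substituting $\mathbf t = (s, t, \ldots, t)$ with $t = (1-s)/(v-1)$ and splitting the $\xi$-integral at the scales $|\xi| \sim 1/s$ and $|\xi| \sim 1/t$ gives a pointwise bound $C_0(\mathbf t) \lesssim s^{-\bz}(1-s)^{2\bz - 1} + O(1)$, integrable on $[0, 1]$ for $0 < \bz < 1$. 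For part (c), the measures $\{\Lambda_{\mathbf t}^{(\eps)}\}_{0 < \eps \le 1}$ are nonnegative, supported in $[-1, 2]^v$, and have uniformly bounded total mass by (b), so Banach--Alaoglu produces weak-$\ast$ subsequential limits. Part (a) forces all such limits to agree on the class $\mathcal V$, which is dense in $C([-1, 2]^v)$ by Fej\'er-type regularization (convolve with a kernel whose Fourier transform has compact support). The uniform sup-norm bound from (b) then promotes uniqueness to all of $C([0,1]^v)$ via a standard $\eps/3$ argument, and the Riesz representation theorem realizes the resulting functional as a Radon measure.

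The main obstacle is part (d). The support containment \eqref{2.2} is automatic: $\supp(\Lambda_{\mathbf t}^{(\eps)}) \subseteq X_{\mathbf t}(\mathcal N_\eps[E])$, and these sets decrease to $X_{\mathbf t}(E)$ as $\eps \to 0$. For the diagonal null-set property \eqref{2.3}, fix indices $i \ne j$, a nonnegative bump $\chi \in C_c^\infty(\rr)$ with $\chi(0) = 1$, and set $\phi_\delta(x) := \chi((x_i - x_j)/\delta)$, so that $\mathbf 1_{\{x_i = x_j\}} \le \phi_\delta$ pointwise. It suffices to show $\la \Lambda_{\mathbf t}^{(\eps)}, \phi_\delta\ra \to 0$ as $\delta \to 0$, uniformly in $\eps$. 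Running the Fourier-inversion derivation of part (a) with $\mathtt f = \phi_\delta$, and using that $\phi_\delta$ depends only on the single coordinate difference $x_i - x_j$, collapses the formula to
\begin{equation*}
\la \Lambda_{\mathbf t}^{(\eps)}, \phi_\delta\ra = \frac{1}{(2\pi)^2}\int_{\rr} \widehat\chi(\eta')\left[\int_{\rr} \widehat{\mu_\eps}(\xi)\, \widehat{\mu_\eps}\Bigl(-\tfrac{\eta'}{\delta} - t_i\xi\Bigr)\widehat{\mu_\eps}\Bigl(\tfrac{\eta'}{\delta} - t_j\xi\Bigr)\prod_{k \ne i, j}\widehat{\mu_\eps}(-t_k\xi)\, d\xi\right] d\eta'.
\end{equation*}
For each fixed $\eta' \ne 0$, as $\delta \to 0$ the three peaks of the inner integrand at $\xi = 0$, $\xi = -\eta'/(\delta t_i)$, and $\xi = \eta'/(\delta t_j)$ separate to distance $\sim |\eta'|/\delta$, and a peak-by-peak estimate using $\bz > 1/(v+1)$ shows the inner integral is bounded by a negative power of $|\eta'|/\delta$ and tends to zero. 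A uniform $L^\infty$ bound on the inner integral (finite by the same argument as in part (b)) combined with $\widehat\chi \in L^1(\rr)$ then finishes the proof by dominated convergence.
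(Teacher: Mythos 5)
Your proposal is correct, and for parts (a)--(c) it follows essentially the paper's route: Fourier inversion of the mollified factors, domination via $|\widehat{\mu}_\eps|\le|\widehat\mu|$, the observation that $\supp(\widehat{\mathtt f})$ confines each $\eta_i$ to an $O(1)$ window about $-t_i\xi$ so the integrand is controlled by $(1+|\xi|)^{-\beta}\prod_i(1+t_i|\xi|)^{-\beta}$ (integrable since $(v+1)\beta>1$), and density of $\mathcal V$ in $C([0,1]^v)$ via convolution with a kernel of compactly supported Fourier transform. The paper constructs the extension in (c) directly through Cauchy sequences rather than by Banach--Alaoglu plus uniqueness of subsequential limits, but the two are interchangeable. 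One small imprecision: your pointwise bound for $C_0(\mathbf t(s))$ drops a $\log(1/(1-s))$ factor at the single exponent $\beta=\frac12$; this does not affect integrability, and one can in any case shrink $\beta$ slightly, as the paper implicitly does.

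The one place where you genuinely diverge is the diagonal estimate \eqref{2.3}. Your two-variable Fourier representation is the paper's \eqref{tobe-est} after the substitution $\xi\mapsto\eta'/\delta$, but the subsequent estimation differs. The paper takes $\widehat\chi$ compactly supported, so the inner integral runs over $[-R/\delta,R/\delta]$, and then applies Cauchy--Schwarz in $\xi$ followed by H\"older in $\eta$ with exponents $\bigl(\tfrac{v+1}{v-1},v+1,v+1\bigr)$, producing the quantitative bound $|\langle\Lambda_{\mathbf t},\mathtt f^{[\delta]}\rangle|\lesssim\delta^{s}$ for explicit $s>0$. You instead keep only $\widehat\chi\in L^1$, freeze $\eta'$, and exploit the separation $\sim|\eta'|/\delta$ of the three peaks of the inner integrand; a peak-by-peak estimate gives a bound of order $(1+|\eta'|/\delta)^{1-(v+1)\beta}$ up to a logarithm, which tends to zero, and dominated convergence in $\eta'$ finishes. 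This is a legitimate, somewhat softer alternative: it forfeits the explicit rate in $\delta$ but avoids the H\"older bookkeeping, and the uniformity in $\eps$ comes for free from $|\widehat{\mu}_\eps|\le|\widehat\mu|$. To make it airtight you would need to write out the three-region decomposition of the inner $\xi$-integral and verify the uniform bound $\sup_{a\in\rr}\int(1+|\xi|)^{-(v-1)\beta}(1+|a+t_i\xi|)^{-\beta}(1+|a-t_j\xi|)^{-\beta}\,d\xi<\infty$; both are routine given $(v+1)\beta>1$.
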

\subsection{The measure $\Lambda_{\mathbf t}$} 
\subsubsection{Proof of Proposition \ref{p-2} \eqref{nut-parta}}
\begin{proof}
Suppose that $\mathtt f \in \mathcal V$. Then, for every $\mathbf t \in (0,1)^v$ and every $\eps > 0$, the defining integral for $\langle \Lambda_{\mathbf t}^{(\eps)}, \mathtt f \rangle$ in \eqref{nute-def} is absolutely convergent. Hence  we can use Fourier inversion to write 
\begin{align}\label{2.9}
\la\Lambda_{\mathbf t}^{(\eps)}, \mathtt f\ra &= \int_{\rr ^{v}} \mathtt f(x_1,\cdots,x_v)
\Biggl[\int_{\rr^{v+1}}
\widehat{\mu}_\eps(\xi)\widehat\mu_{\eps}(\eta_1)\cdots \widehat \mu_{\eps}(\eta_{v}) \notag \\ 
&\hskip0.5in \times \exp \Biggl( 2\pi i \Bigl(\xi\cdot \sum_{i=1}^v t_ix_i+\sum_{i=1}^v x_i\eta_i \Bigr) \Biggr) \,d\xi\,d\eta_1 \cdots\,d\eta_v \Bigg]
\,dx_1\cdots\,dx_v
\notag\\
&= \int_{\rr^{v+1}}
\widehat{\mu}_\eps(\xi)\widehat\mu_{\eps}(\eta_1) \cdots\widehat\mu_{\eps}(\eta_v)
\Biggl [\int_{\rr^{\nu}}\mathtt f(x_1,\cdots, x_v) \notag
\\&\hskip0.5in \times 
\exp \Biggl( 2\pi i \sum_{i=1}^v x_i \left(\eta_i+t_i\xi \right) \Biggr)
\,dx_1\cdots\,dx_v\Bigg]\,d\xi \,d\eta_1 \cdots 
\,d\eta_v \notag\\
&= \int_{\rr^{v+1}}\widehat{\mu}_\eps(\xi)\widehat \mu_{\eps}(\eta_1) 
\cdots\widehat\mu_{\eps}(\eta_v)
\widehat{\mathtt f} \left(-\eta_1-t_1\xi,\cdots, -\eta_v -
t_v\xi\right)\,d\xi \,d\eta_1\cdots \,d\eta_v.
\end{align}
We observe that 
\begin{equation}
\widehat{\mu}_\eps(\xi)=\widehat\mu(\xi)\widehat{\psi}_\varepsilon(\xi)=
\widehat\mu(\xi)\widehat\psi(\varepsilon\xi),\mbox{ for all }\xi\in\rr.
\end{equation}
Since $\int\psi=1$, we have $\widehat\psi(0)=1$. Hence 
$\widehat{\mu}_\eps(\xi)\rightarrow\mu(\xi)\,\, \mbox{as}\,\,\eps \rightarrow 0$ for all $\xi\in\rr$, which means that the integrand in \eqref{2.9} converges pointwise to the integrand in \eqref{fab}.  Moreover, $|\widehat\psi(\xi)|\leq 1$ for all $\xi\in\rr$, and therefore  $| \widehat{\mu}_\eps(\xi) | = | \widehat{\mu}(\xi) \widehat{\psi}(\eps \xi) | \leq | \widehat{\mu}(\xi) |$.  Combining the two observations above, the desired representation \eqref{fab} will follow from the dominated convergence theorem, provided  we show that 
\begin{align}\label{2.10}
\mathscr{I}:=\int_{\rr^{v+1}}\lf|\,\widehat\mu(\xi)\widehat\mu(\eta_1) \cdots\widehat\mu(\eta_v)
\widehat{\mathtt f} \left(-\eta_1-t_1\xi,\cdots, -\eta_v-t_v\xi\right)
\right|\,d\xi \,d\eta_1\cdots \,d\eta_v<\infty.
\end{align} 
 \vskip0.1in
 \noindent We set about proving the convergence of this integral. Since $\mathtt f \in \mathcal V$, there exists a constant $R > 0$ such that $\supp(\widehat{\mathtt f}) \subseteq [-R, R]^v$. The domain of integration for the integral in (\ref{2.10}) is therefore 
$$\Omega:=\left\{(\xi,\eta_1, \cdots,\eta_v)\in\rr^{v+1}:\,\left|-\eta_i-t_i\xi\right|\leq R, \; 1 \leq i \leq v \right\}.$$
For fixed $\xi\in\rr$, we set 
\begin{align} \mathscr{J}(\xi; \mathbf t) &:=\int_{\Omega_\xi}\left|\widehat{\mu}(\eta_1) \cdots \widehat{\mu}(\eta_v)
\widehat{\mathtt f} \left(-\eta_1-t_1\xi,\cdots, -\eta_v-t_v\xi\right)\right|
\,d\eta_1\cdots \,d\eta_v, \text{ where } \notag \\ 
\Omega_\xi &:=\bigl\{(\eta_1, \cdots,\eta_v )\in\rr^{v}:\, (\xi, \eta_1, \cdots, \eta_v) \in \Omega \bigr\} \notag \\ 
&\phantom{:}= \bigl\{(\eta_1, \cdots,\eta_v )\in\rr^{v}:\, \left|-\eta_i-t_i\xi\right|\leq R, \; 1 \leq i \leq v \bigr\} \notag \\  & \phantom{:} = \prod_{i=1}^{v} \bigl[-t_i \xi -R, -t_i \xi + R \bigr], \text{ so that } \notag \\  
\mathscr{I} &\phantom{:} = \int_{\mathbb R} \bigl| \widehat{\mu} (\xi) \bigr| \mathscr{J}(\xi; \mathbf t) \, d\xi.     \label{I-again}
\end{align} 
The Fourier decay condition \eqref{star} gives rise to the estimate 
\begin{align}  
\int_{-t_i \xi - R}^{-t_i \xi + R} | \widehat{\mu}(\eta_i) | d\eta_i &\leq C \int_{-t_i \xi - R}^{-t_i \xi + R} (1 + |\eta_i|)^{-\beta} \, d\eta_i  \notag \\ 
&\leq  
\left\{ 
\begin{aligned} 
&C \int_{-3R}^{3R} d\eta_i &\text{ if } t_i|\xi| \leq 2R, \\ 
& C \int_{-t_i \xi - R}^{-t_i \xi + R} \bigl(1 + t_i |\xi|/2 \bigr)^{-\beta} \, d\eta_i &\text{ if } t_i|\xi| > 2R, 
\end{aligned} 
\right\} \notag \\ 
&\leq C(R, \beta) (1 + t_i |\xi|)^{-\beta}.   \notag 
\end{align} 
This in turn implies that 
\begin{equation} 
\mathscr{J}(\xi; \mathbf t) \leq ||\widehat{\mathtt f}||_{\infty} \int_{\Omega_{\xi}} \prod_{i=1}^{v} \bigl| \widehat{\mu}(\eta_i) \bigr| d\eta_1 \cdots d\eta_v \leq  C(R, \beta) ||\widehat{\mathtt f}||_{\infty} \prod_{i=1}^{v} (1 + t_i |\xi|)^{-\beta}. \label{J-estimate} 
\end{equation}
The constant $C(R, \beta)$ in the inequality above is independent of $\mathbf t$, and depends only on $R$, $\beta$ and the constant $C$ in \eqref{star}. Substituting \eqref{J-estimate} into \eqref{2.10} and \eqref{I-again} , we find that 
\[ \mathscr{I} = \int_{\mathbb R} | \widehat{\mu}(\xi) | \mathscr{J}(\xi; \mathbf t) \, d\xi \leq C(R, \beta) ||\widehat{\mathtt f}||_{\infty} \int_{\mathbb R} (1 + |\xi|)^{-\beta} \prod_{i=1}^{v} (1 + t_i |\xi|)^{-\beta} d\xi \leq C(R, \mathtt f, \beta, \mathbf t) < \infty,  \] 
where the last integral converges since $t_i > 0$ for all $i \in \{1, \ldots, v\}$ and $(v+1) \beta > 1$ by assumption. 
\end{proof}  
\subsubsection{Proof of Proposition \ref{p-2} \eqref{nut-partb}}
 \begin{proof} 
By part \eqref{nut-parta}, the limit in \eqref{2.7} exists for $\mathtt f \in \mathcal V$; hence the second inequality in \eqref{2.8} follows from the first. For $\eps > 0$, the defining integral in \eqref{nute-def} absolutely convergent. Hence it follows from the Fourier inversion formula and the non-negativity of $\mu_{\eps}$ that 
\begin{align}\label{2.8-}
\bigl| \bigl \langle \Lambda_{\mathbf t}^{(\eps)}, \mathtt f \big\rangle  \bigr| &=  \Bigl| \int_{\rr^{v}} \mathtt f(x_1,\cdots,x_v)\mu_\eps\Bigl(\sum_{i=1}^v t_ix_i\Bigr)\, \prod_{i=1}^{v}\mu_{\eps}(x_i) dx_1 \cdots dx_v \Bigr| \notag\\
&\leq ||\mathtt f||_{\infty} \int_{\rr^{v}}\mu_\eps\Bigl(\sum_{i=1}^v t_ix_i\Bigr)\, \prod_{i=1}^{v}\mu_{\eps}(x_i) dx_1 \cdots dx_v \notag \\
&\leq ||\mathtt f||_{\infty} \Bigl| \int_{\rr} {\widehat\mu_{\eps}(\xi)} \prod_{i=1}^{v} \widehat\mu_{\eps}(- t_i \xi)\,d\xi \Bigr| \notag \\
&\leq ||\mathtt f||_{\infty} \int_{\rr} \Bigl| {\widehat\mu (\xi)} \prod_{i=1}^{v} \widehat\mu (- t_i \xi) \Bigr| \,d\xi \notag \\
&\leq C^v ||\mathtt f||_{\infty} \int_{\mathbb R} (1 + |\xi|)^{-\beta} \prod_{i=1}^{v} (1 + t_i |\xi|)^{-\beta} \, d\xi \notag \\ 
&\leq C_0(\mathbf t) ||\mathtt f||_{\infty}, \text{ where } \notag \\ C_0(\mathbf t) &:= C^v \int_{\mathbb R} (1 + |\xi|)^{-\beta} \prod_{i=1}^{v} (1 + t_i |\xi|)^{-\beta} \, d\xi < \infty \text{ because } \beta (v+1) > 1. 
\end{align}
In particular, if $\mathbf t = (s, t, \cdots, t) \in (0,1)^v$ with $s + (v-1)t = 1$, then the constant $C_0(\mathbf t)$ in \eqref{2.8-} reduces to
\[ C_0(\mathbf t) = C^v \int_{\mathbb R} (1 + |\xi|)^{-\beta} (1 + s|\xi|)^{-\beta} (1 + t |\xi|)^{-(v-1) \beta} \, d\xi.  \] 
Without loss of generality, replacing the decay exponent \eqref{star} by a smaller $\beta$ if necessary, we may assume $\beta < 1$. For $1/(v+1) < \beta < 1$, we estimate the above integral as follows, 
\begin{align*}
C_0(\mathbf t) &\leq C^v s^{-\beta} \int_{\rr} (1 + |\xi|)^{-2\beta} (1 + t|\xi|)^{-(v-1) \beta}d \xi \\ &\leq C^v s^{-\beta} \Bigl[  \int_{|\xi| \leq t^{-1}} (1 + |\xi|)^{-2\beta} d \xi + \int_{|\xi| > t^{-1}} t^{-(v-1) \beta}  |\xi|^{- (v+1) \beta} \, d\xi \Bigr] \\
&\leq C(v, \beta) s^{-\beta} \left[ \left\{ \begin{aligned} &1 &\text{ if }  2 \beta > 1, \\ &\log(1/t) &\text{ if } 2 \beta = 1, \\ &t^{2 \beta -1} &\text{ if } 2 \beta < 1\end{aligned} \right\} + t^{2 \beta -1}  \right] \\
&\leq C(v, \beta) \times  
\begin{cases} 
s^{-\beta} (1-s)^{2\beta - 1} &\text{ for } \frac{1}{v+1} < \beta < \frac12, \\
s^{-\beta} \log(1/(1-s)) &\text{ for } \beta = \frac12, \\ 
s^{-\beta} &\text{ for } \frac12 < \beta < 1. 
\end{cases} 
\end{align*} 
The last expression is an integrable function of $s$ on $[0,1]$ for any choice of $\beta \in (\frac{1}{v + 1}, 1)$, as claimed in \eqref{t-diagonal}. 

\end{proof} 

\subsubsection{Proof of Proposition \ref{p-2} \eqref{nut-partc}} 
\begin{proof} 
There are two items to be proved here. The first is to show that the definition \eqref{nute-def} of $\Lambda_{\mathbf t}$ on $\mathcal V$ admits a natural extension to $C([0,1]^v)$. The second is to establish that this extended definition obeys \eqref{def-nu} on $C([0,1]^v)$.
\vskip0.1in
\noindent The main observation in support of our first goal is that the class of functions in $\mathcal V$, with domain restricted to $[0,1]^v$, is dense in $C([0,1]^v)$. This follows from the fact that any $\mathtt f \in C([0,1]^v)$ admits a continuous extension $\mathtt f_0 : \mathbb R^v \rightarrow \mathbb C$ with compact support in $\mathbb R^v$. Let $\varphi$ denote a Schwartz function such that $\widehat{\varphi}(0) = 1$ and $\widehat{\varphi} \in C_{c}^{\infty}(\mathbb R^v)$. Then $\varphi_{\eps} = \eps^{-1} \varphi(\cdot/\eps)$ is an approximation to the identity as $\eps \rightarrow 0$. Thus the sequence of functions $\{\mathtt f_0 \ast \varphi_{\eps} : \eps > 0\} \subseteq \mathcal V$ converges uniformly to $\mathtt f_0$ uniformly on compact sets, and hence uniformly to $\mathtt f$ on $[0,1]^v$, as $\eps \rightarrow 0$. 
\vskip0.1in
\noindent Suppose that $\mathtt f \in C([0,1]^v)$ and that $\{\mathtt f_n : n \geq 1\} \subseteq \mathcal V$ is such that $\mathtt f_n \rightarrow \mathtt f$ uniformly on $[0,1]^v$. Then it follows from the second inequality in \eqref{2.8} that for every $\mathbf t \in (0,1)^v$, 
\[ \bigl| \langle \Lambda_{\mathbf t}^{(\eps)}, \mathtt f_n - \mathtt f_m  \rangle \bigr| \leq C_0(\mathbf t) ||\mathtt f_n - \mathtt f_m||_{\infty} \rightarrow 0 \text{ as } n, m \rightarrow \infty, \] 
i.e., the sequence $\{ \langle \Lambda_{\mathbf t}, \mathtt f_n\rangle : n \geq 1\}$ is Cauchy, and hence admits a limit. We denote this limit by $\langle \Lambda_{\mathbf t}, \mathtt f\rangle$.  If $\{\mathtt g_n : n \geq 1\} \subseteq \mathcal V$ is another approximating sequence for $\mathtt f$, then the second inequality in \eqref{2.8} also shows that $\bigl| \langle \Lambda_{\mathbf t}, \mathtt f_n - \mathtt g_n \rangle \bigr| \leq C_0(\mathbf t) || \mathtt f_n - \mathtt g_n||_{\infty} \rightarrow 0$, i.e., the limit does not depend on the approximating sequence $\mathtt f_n$. It is clear from the definition that $\Lambda_{\mathbf t}$ is a non-negative linear functional. The definition of $\Lambda_{\mathbf t}$ also implies that the inequality \eqref{2.8} holds for all $\mathtt f \in C([0,1]^v)$, not merely $\mathtt f \in \mathcal V$. This is then equivalent to the statement that $\Lambda_{\mathbf t}$ is a bounded linear functional on $C([0,1]^v)$.   
\vskip0.1in 
\noindent Next, we will show that the limit in \eqref{def-nu} holds for any $\mathtt f \in C([0,1]^v)$. Let us fix any $\kappa > 0$, and $\mathtt g \in \mathcal V$ such that $||\mathtt f - \mathtt g||_{L^{\infty}[0,1]} < \frac{\kappa}{4C_0(\mathbf t)}$.  It follows from \eqref{fab} that there exists $\eps_0 > 0$ such that  \[ \bigl| \langle \Lambda_{\mathbf t}^{(\eps)} - \Lambda_{\mathbf t}, \mathtt g \rangle \bigr| < \frac{\kappa}{2} \quad \text{ for all } \eps < \eps_0. \]
Combined with \eqref{2.8}, this gives 
\begin{align*} \bigl| \langle \Lambda_{\mathbf t}^{(\eps)} - \Lambda_{\mathbf t}, \mathtt f \rangle\bigr| &\leq \bigl| \langle \Lambda_{\mathbf t}^{(\eps)} - \Lambda_{\mathbf t}, \mathtt g \rangle \bigr|  +  \bigl| \langle \Lambda_{\mathbf t}^{(\eps)} - \Lambda_{\mathbf t}, \mathtt f - \mathtt g \rangle \bigr| \\ 
&\leq \bigl| \langle \Lambda_{\mathbf t}^{(\eps)} - \Lambda_{\mathbf t}, \mathtt g \rangle \bigr| +  \bigl| \langle \Lambda_{\mathbf t}^{(\eps)}, \mathtt f - \mathtt g \rangle \bigr| +  \bigl| \langle \Lambda_{\mathbf t}, \mathtt f - \mathtt g \rangle \bigr| \\ 
&\leq \frac{\kappa}{2} + 2C_0(\mathbf t) ||\mathtt f- \mathtt g||_{\infty} < \kappa, \quad \text{ for all } \eps <  \eps_0. 
\end{align*}   
This establishes the existence of the limit, and completes the proof of \eqref{nut-partc}.  
\end{proof} 

\subsubsection{Proof of Proposition \ref{p-2} \eqref{nut-partd}} 
\begin{proof} 
By the Riesz representation theorem, the linear functional $\Lambda_{\mathbf t}$ can be  identified with a non-negative, finite Radon measure on $[0,1]^v$. For the remainder of this proof, we will use $\Lambda_{\mathbf t}$ to denote this measure. 
\vskip0.1in 
\noindent {\em{Proof of (\ref{2.2}).}} Let us assume that $X_{\mathbf t}(E) \ne \emptyset$. Since $E$ is closed, $X_{\mathbf t}(E)$ is closed as well. In order to establish the support condition \eqref{2.2}, let us fix any $\mathbf u = (u_1, \cdots, u_v) \in [0,1]^v \setminus X_{\mathbf t}(E)$. We aim to show that there exists a small constant $\delta > 0$ such that 
\begin{align} Q(\mathbf u; \delta) &\subseteq [0,1]^v \setminus X_{\mathbf t}(E) \; \text{ and } \notag \\  \langle \Lambda_{\mathbf t}, \mathtt f \rangle = 0 \text{ for any } & \mathtt f \in C([0,1]^v) \text{ with } \supp(\mathtt f) \subseteq Q(\mathbf u; \delta).   \label{supp-nut}\end{align} 
Here $Q(\mathbf u; \delta)$ denotes the closed cube centred at $\mathbf u$ of sidelength $\delta$.  
\vskip0.1in
\noindent  Since $\mathbf u \notin X_{\mathbf t}(E)$, one of the following two conditions must hold: 
\begin{align} 
&\text{either there exists $1 \leq i \leq v$ such that $u_i \notin E$, } \label{case1} \\  
&\text{or $\mathbf u \in E^v$ but $\sum_{i=1}^v t_i u_i \notin E$.  } \label{case2}
\end{align} 
Let us recall that $E$ is closed by definition; hence, if \eqref{case1} holds, then there exists $\delta > 0$ such that $\text{dist}(u_i, E) > 2\delta$, which implies that $\text{dist}(u_i, \mathcal N_{\eps}[E]) > \delta$ for all $\eps < \delta$.
This means that $[u_i - \delta/2, u_i + \delta/2] \cap \mathcal N_{\eps}[E] = \emptyset$, hence $\mathtt f(x_1, \cdots, x_v) \mu_{\eps}(x_i) \equiv 0$ for any $\mathtt f$ supported on $Q(\mathbf u; \delta)$ and any $\eps <  \delta$. In view of \eqref{nute-def}, we then have that $\langle \Lambda_{\mathbf t}^{(\eps)}, \mathtt f \rangle = 0$ for all $\eps < \delta$. Letting $\eps \rightarrow 0$, the conclusion \eqref{def-nu} then implies that $\langle \Lambda_{\mathbf t}, \mathtt f \rangle = 0$.    
\vskip0.1in
\noindent Suppose now that \eqref{case2} holds. Then there exists $\delta > 0$ such that \[ \text{dist}\Bigl(\sum_{i=1}^v t_i u_i, E \Bigr) > 3\delta \quad \text{ and hence } \quad \text{dist}\Bigl(\sum_{i=1}^v t_i u_i,  \mathcal N_{\eps}[E] \Bigr) > 2\delta \;  \text{ for }  \eps < \delta. \] This means that for $(x_1, \cdots, x_v) \in Q(\mathbf u, \delta)$, one has 
\begin{align*} \text{dist} \Bigl(\sum_{i=1}^v t_i x_i, \mathcal N_{\eps}[E] \Bigr) &\geq \text{dist} \Bigl(\sum_{i=1}^v t_i u_i,  \mathcal N_{\eps}[E] \Bigr) - \sum_{i=1}^v t_i |x_i - u_i|  \\ & > 2 \delta - \sum_{i=1}^v t_i \frac{\delta}{2} >  \delta, \text{ since } t_1 + \cdots + t_v = 1. \end{align*}   The same argument as above now implies that $\mathtt f(x_1, \cdots x_v) \mu_{\eps} \bigl(\sum_{i=1}^v t_i x_i \bigr) \equiv 0$ for any $\mathtt f$ supported on $Q(\mathbf u, \delta)$ and $\eps < \delta$. The desired conclusion \eqref{supp-nut} now follows in the same way as in case 1. 
\vskip0.1in
\noindent {\em{Proof of (\ref{2.3}).}} 
Let $\chi:\,\rr\rightarrow [0,\infty)$ be a non-negative Schwartz function such that $\chi\geq 1$
on $[-1,1]$ and $\supp(\widehat{\chi}) \subset[-R,R]$ for some $R\in(0,\infty)$. Set $\chi_\delta(x) :=\chi(\delta^{-1}x)$. Given any two indices $i, j\in \{1, \cdots, v\}$, $i \ne j$, we will prove that
\begin{equation}\label{2.16}
\begin{aligned} 
&\la\Lambda_{\mathbf t}, \mathtt f^{[\delta]} \ra\to 0 \,\text{ as } \,\delta\to 0, \quad \text { where } \quad \mathtt f_0(x_1, \cdots, x_v) = 
\chi_\delta(x_i-x_j) 
\end{aligned} 
\end{equation}
and $\mathtt f^{[\delta]}$ is the restriction of $\mathtt f_0$ to $[0,1]^v$. Clearly $\mathtt f^{[\delta]}$ is non-negative, and $\mathtt f^{[\delta]} \in C([0,1]^v)$. Since $\Lambda_{\mathbf t}\bigl( \bigl\{\mathbf x \in [0,1]^v: x_i = x_j \bigr\} \bigr) \leq \la\Lambda_{\mathbf t}, \mathtt f^{[\delta]} \ra$, proving \eqref{2.16} implies (\ref{2.3}).
\vskip0.1in
\noindent In order to prove \eqref{2.16}, we proceed as in the proof of Proposition \ref{p-2}(\ref{nut-parta}). For every $\eps > 0$, we can use Fourier inversion to write
\begin{align} 0 \leq \big \langle \Lambda_{\mathbf t}^{(\eps)}, \mathtt f^{[\delta]} \big \rangle &\leq \int_{\mathbb R^v} \chi_{\delta}(x_i- x_j) \mu_{\eps} \Bigl( \sum_{k=1}^{v} t_k x_k \Bigr)  \prod_{k=1}^{v} \mu_{\eps}(x_k) dx_1 \cdots dx_v  \notag  \\ &\leq \Bigl| \int_{\rr^{2}}\widehat{\chi}_\delta(\xi)\widehat{\mu}_\eps(\eta)
\widehat{\mu}_{\eps}\left(-\xi-t_i\eta\right)\widehat\mu_{\eps}\left(\xi-t_j\eta\right)
\prod_{\substack{k=1\\k\neq i, j}}^v \widehat{\mu}_{\eps}\left(-t_k\eta\right)
\,d\xi \,d\eta \Bigr| \notag \\  
&\leq \int_{\rr^{2}}\Bigl|\widehat{\chi}_\delta(\xi)\widehat\mu(\eta)
\widehat\mu\left(-\xi-t_i\eta\right)\widehat\mu\left(\xi-t_j\eta\right)
\prod_{\substack{k=1\\k\neq i, j}}^v\widehat\mu\left(-t_k\eta\right)
\Bigr|\,d\xi \,d\eta \label{tobe-est}
\end{align}
The last integral is independent of $\eps$, and hence provides a bound for $\langle \Lambda_{\mathbf t}, \mathtt f^{[\delta]}\rangle$ as well, in light of Proposition \ref{p-2}\eqref{nut-partc}. Our goal is to show that it is bounded by a constant multiple of $\delta^s$, for some $s > 0$. This of course implies \eqref{2.16}.
\vskip0.1in
\noindent The estimation of the integral in \eqref{tobe-est} proceeds as follows. Since $\widehat{\chi}_\delta(\xi)=\delta \widehat{\chi}(\delta\xi)$
is supported on $[-\frac{R}{\delta},\frac{R}{\delta}]$, we have
\begin{align}\label{2.18}
\bigl| \big \langle \Lambda_{\mathbf t}, \mathtt f^{[\delta]} \big \rangle \bigr|  &\leq C \delta\int_{\rr}\Biggl[ \int_{-R/\delta}^{R/\delta}\Bigl|\widehat\mu\left(-\xi-t_i\eta\right)\widehat\mu\left(\xi-t_j\eta\right) \Bigr|\,d\xi \Biggr] \times \Bigl| \widehat{\mu}(\eta) \prod_{\substack{k=1\\k\neq i,j}}^v \widehat\mu\left(-t_k\eta\right) \Bigr|
 \,d\eta\notag\\
&\leq C_{\mathbf t} \delta \int_{\mathbb R} (1 + |\eta|)^{-(v-1) \beta} \Biggl[ \int_{-R/\delta}^{R/\delta}\bigl|\widehat\mu\left(-\xi-t_i\eta\right)\widehat\mu\left(\xi-t_j\eta\right) \bigr|\,d\xi \Biggr] \, d\eta \notag \\
&\leq C_{\mathbf t} \delta \int_{\mathbb R} (1 + |\eta|)^{-(v-1) \beta} \Biggl[ \int_{-R/\delta}^{R/\delta}\bigl|\widehat\mu\left(-\xi-t_i\eta\right) \bigr|^2 \, d\xi \Biggr]^{\frac{1}{2}}\Biggl[ \bigl|\widehat\mu\left(\xi-t_j\eta\right) \bigr|^2\,d\xi \Biggr]^{\frac{1}{2}} \, d\eta \notag \\
&\leq C_{\mathbf t} \delta \int_{\mathbb R} (1 + |\eta|)^{-(v-1) \beta} \mathtt F_{t_i}(\eta) \mathtt F_{t_j}(\eta) \, d\eta \leq C_{\mathbf t}(\beta) \delta ||\mathtt F_{t_i}||_{q} || \mathtt F_{t_j}||_{q},
\end{align}
where at the last step of \eqref{2.18}, we have applied H\"older's inequality with the exponents $p=(v+1)/(v-1)$ and $q = (v+1)$, so that $1/p + 2/q = 1$. Here we have also used the fact that $(v+1) \beta > 1$, which ensures the integrability of the function $\eta \mapsto (1 + |\eta|)^{-p(v-1)\beta}$ on $\mathbb R$. The function $F_t$ in \eqref{2.18} is defined and estimated as follows: for $t \in (0,1)$, 
\begin{align} 
\bigl[ \mathtt F_t(\eta) \bigr]^2 &:=\int_{-R/\delta}^{R/\delta}\bigl|\widehat\mu\left(-\xi-t\eta\right) \bigr|^2 \,d\xi  \notag \\ 
&\leq C \int_{-R/\delta}^{R/\delta} \bigl(1 + |\xi + t\eta| \bigr)^{-2 \beta} \, d\xi \notag \\ 
&\leq C \times \left\{ 
\begin{aligned} &\int_{-{3R}/{\delta}}^{{3R}/{\delta}} \bigl(1 + |\zeta| \bigr)^{-2 \beta} \, d\zeta &\text{ if } t |\eta| \leq \frac{2R}{\delta}, \\ 
&\int_{-R/\delta}^{R/\delta} \bigl(1 + |t \eta|/2 \bigr)^{-2 \beta} d\xi &\text{ if } t |\eta| > \frac{2R}{\delta} 
\end{aligned}  \right\} \label{F_t-est1} \\ 
&\leq C \times  \left\{ \begin{aligned} &\max \Bigl[1,  \Bigl(\frac{R}{\delta} \Bigr)^{1 - 2\beta} \log(R/\delta) \Bigr] &\text{ if } t |\eta| \leq \frac{2R}{\delta}, \\ 
& \frac{R}{\delta} \bigl(1 + t |\eta| \bigr)^{-2\beta} &\text{ if } t |\eta| > \frac{2R}{\delta} 
\end{aligned} \right\}. \label{F_t-est2}
\end{align} 
In the first estimate in \eqref{F_t-est1}, we have used the change of variable $\xi \mapsto \zeta = \xi + t\eta$, along with the inclusion 
\[ \Bigl\{\zeta = \xi + t\eta : |\xi| \leq \frac{R}{\delta}, \; t|\eta| \leq \frac{2R}{\delta} \Bigr\} \subseteq \Bigl[-\frac{3R}{\delta}, \frac{3R}{\delta} \Bigr]. \] 
The second estimate in \eqref{F_t-est1} follows from $|\xi + t \eta| \geq |t \eta| - R/\delta \geq t|\eta|/2$.  
The final pointwise bound in \eqref{F_t-est2} allows us to estimate the Lebesgue norm of $F_t$: 
\begin{align} ||\mathtt F_t||_q &\leq C_{R,t} \Biggl[ \max \Bigl[ 1, \delta^{2 \beta -1} \log(1/\delta) \Bigr]^{\frac{1}{2}} \delta^{-\frac{1}{q}} + \delta^{-\frac{1}{2}} \Biggl(\int_{t|\eta| > \frac{2R}{\delta}} (1 + t|\eta|)^{-\beta q} \, d\eta \Biggr)^{\frac{1}{q}} \Biggr] \notag \\
&\leq C_{R,t} \Biggl[ \max \Bigl[1, \delta^{2 \beta -1} \log(1/\delta) \Bigr]^{\frac{1}{2}} \delta^{-\frac{1}{q}} + \delta^{- \frac{1}{2}+  \beta - \frac{1}{q}} \Biggr] \label{F_t-norm}
\end{align} 
Substituting \eqref{F_t-norm} into \eqref{2.18} yields  
\begin{align*} \delta ||\mathtt F_{t_i}||_{q} ||\mathtt F_{t_j}||_{q}&\leq C_{R, t_i, t_j} \delta \times \begin{cases} \Bigl[ \delta^{-\frac{2}{q}} + \delta^{-1 + 2(\beta - \frac{1}{q})}\Bigr] &\text{ if } \beta > \frac{1}{2}, \\ \delta^{2 \beta -1 - \frac{2}{q}} \log(1/\delta) &\text{ if } \beta \leq \frac{1}{2},  \end{cases} \\
&\leq C_{R, t_i, t_j} \times \begin{cases} \delta^{1-\frac{2}{q}} + \delta^{2(\beta - \frac{1}{q})}&\text{ if } \beta > \frac{1}{2}, \\ \delta^{2(\beta - \frac{1}{q})} \log(1/\delta) &\text{ if } \beta \leq \frac{1}{2}. \end{cases} 
\end{align*} 
Recalling that $q = (v+1)$ and $(v+1) \beta > 1$, we observe that in both cases, there exists some $s > 0$ such that 
\[\bigl| \big \langle \Lambda_{\mathbf t}, \mathtt f^{[\delta]} \big \rangle \bigr| \leq \delta ||\mathtt F_{t_i}||_{q} ||\mathtt F_{t_j}||_{q} \leq C_{R, t_i, t_j}\delta^s, \] and hence tends to $0$ as $\delta \rightarrow 0$. This completes the proof. 
\end{proof} 

\section{Fourier dimensionality and rational linear patterns} \label{positive-proof-section}

\subsection{Proof of Theorem \ref{t-2}} \label{proof-t-2}
\begin{proof}
We proceed by contradiction. Assume if possible that $E$ avoids all nontrivial
solutions of equations of the form \eqref{ab} for all
choices $(m_0,\ldots,m_v)\in \mathbb N^{v+1}$
with $m_0= m_1 + \cdots + m_v$. This is equivalent to saying that $E$ avoids all nontrivial
solution of equations of the form \eqref{n+1-linear-equation} for all choices of 
$\mathbf t = (t_1, \cdots, t_v) \in\qq_+^{v} \cap \ct_v^{\ast}$, 
where $\qq_+$ denotes the set of positive rational numbers and $\ct_v^{\ast}$ is as in \eqref{def-Tstar}. We will show that if $E$ avoids nontrivial solutions of such equations, then it avoids nontrivial solutions of translation-invariant equations of the form  \eqref{n+1-linear-equation} for {\em{all}} $\mathbf t \in \ct_v^{\ast}$, and that this is not possible.  
\vskip0.1in
\noindent Let us start with a function $\mathtt f \in \mathcal V$. The representation \eqref{fab} in Proposition \ref{p-2} ensures that for any $\mathbf t=(t_1,\ldots,t_v)\in (0,1)^v$  
we can define a function $F = F^{[\mathtt f]}$ on $(0,1)^v$ by 
\begin{equation} \label{def-F}
F^{[\mathtt f]}(\mathbf t):=\la \Lambda_{\mathbf t}, \mathtt f\ra=
\int_{\rr^{v+1}}\widehat\mu(\xi) \Bigl[ \prod_{i=1}^{v} \widehat\mu(\eta_i) \Bigr] 
\widehat {\mathtt f}\left(-\eta_1-t_1\xi,\cdots, -\eta_v-t_v\xi\right)
\,d\xi \,d\eta_1\cdots \,d\eta_v.
\end{equation} 
We will establish momentarily, in Lemma \ref{cty-lemma} below, that $\mathbf t \mapsto F^{[\mathtt f]}(\mathbf t)$ is a continuous function on $(0,1)^v$, and hence on $\ct_v^{\ast}$. Our assumption on $E$ implies that \[ X_{\mathbf t}(E) =\emptyset  \quad \text{ for any } \mathbf t\in\qq_+^{v} \cap \ct_v^{\ast}, \]  
hence we deduce from the support condition \eqref{2.2} that $\Lambda_{\mathbf t} \equiv0$ for such $\mathbf t$. Thus $F^{[\mathtt f]}(\mathbf t)=0$ for every $\mathbf t\in\qq_+^{v} \cap \ct_v^{\ast}$. The continuity of $F^{[\mathtt f]}$ now allows us to conclude that 
\begin{equation} 
F^{[\mathtt f]}(\mathbf t) = 0  \text{ for all } \mathbf t\in \ct_v^{\ast}  \text{ and all }  \mathtt f \in \mathcal V. \label{F=0}
\end{equation} 
 Since $C([0,1]^v) \cap \mathcal V$ is dense in $C([0,1]^v)$ under sup norm, it follows from the boundedness of $\Lambda_{\mathbf t}$ (specifically \eqref{2.8} and part \eqref{nut-partc} in Proposition \ref{p-2}) that 
\begin{equation} \label{will-contradict} 
\langle \Lambda_{\mathbf t}, \mathtt f \rangle = 0 \text{ for all }  \mathtt f \in C([0,1]^v) \text{ and all }  \mathbf t \in \ct_v^{\ast}.  
\end{equation} 
We will use this vanishing property to arrive at the desired contradiction.  
\vskip0.1in
\noindent For every $\eps > 0$, we compute  the mass of $\mu_{\eps}$ in two ways:
\begin{align}
1&= \Biggl[  \int_{0}^{1} \mu_{\eps}(t) \, dt \Biggr]^{v+1} \label{left}  \\ 
&= \int_{[0,1]^{v+1}}
\mu_\eps(x_1)\cdots\mu_\eps(x_v)\mu_\eps\lf(z\r)\,dx_1\cdots\,dx_v\,dz \notag \\
&=\frac1{(v+1)!}
\int_{\{(x_1,\cdots,x_v,z)\in[0,1]^{v+1}:\ x_1<z<x_2<\cdots<x_v\}}
\mu_\eps(x_1)\cdots\mu_\eps(x_v)\mu_\eps\lf(z\r)\,dz\,dx_1\cdots\,dx_v \notag \\
&= \frac1{(v+1)!} \int_{\{(x_1,\cdots,x_v)\in[0,1]^{v}:\ x_1<x_2<\cdots<x_v\}}\int_0^1
\mu_\eps(x_1)\cdots\mu_\eps(x_v) \notag
\\&\hskip0.5in \times
\mu_\eps\Biggl(sx_1+\frac{1-s}{v-1}(x_2+\cdots+x_v)\Biggr)\lf|x_1-\frac1{v-1}(x_2+\cdots+x_v)\r|\,ds\,dx_1\cdots\,dx_v \notag \\
&\le\frac1{(v+1)!} \int_{[0,1]^{v}}\int_0^1
\mu_\eps(x_1)\cdots\mu_\eps(x_v) \notag 
\\&\hskip0.5in \times
\mu_\eps\Biggl(sx_1+\frac{1-s}{v-1}(x_2+\cdots+x_v)\Biggr)\lf|x_1-\frac1{v-1}(x_2+\cdots+x_v)\r|\,ds\,dx_1\cdots\,dx_v \notag \\
&=\frac1{(v+1)!}\int_0^1 \Biggl[ \int_{[0,1]^{v}} \mu_\eps(x_1)\cdots\mu_\eps(x_v) \notag
\\&\hskip0.5in \times  \mu_\eps \Biggl(sx_1+\frac{1-s}{v-1}(x_2+\cdots+x_v)\Biggr)
\lf|x_1-\frac1{v-1}(x_2+\cdots+x_v)\r|\,dx_1\cdots\,dx_v \Biggr] \,ds \notag 
\\ &= \int_{0}^{1} \big\langle \Lambda_{\mathbf t(s)}^{(\eps)}, \mathtt f_{\ast} \big\rangle \, ds.  \label{right} 
\end{align}
In the above sequence of calculations, we have made the change of variable $z \mapsto s$ at the fourth step, with $z = sx_1 + (1-s)(x_2 + \cdots + x_v)/(v-1)$.  The last step follows from the definition \eqref{nute-def}, with
\begin{equation} \label{def-t(s)} \mathbf t(s) = \Biggl(s, \frac{1-s}{v-1}, \cdots, \frac{1-s}{v-1} \Biggr) \; \text{ and } \; \mathtt f_{\ast}(x_1,\cdots,x_v) :=\lf|x_1-\frac1{v-1}(x_2+\cdots+x_v)\r|. \end{equation}  
Clearly $\mathtt f_{\ast} \in C([0,1]^v)$. Combining the left and right ends \eqref{left} and \eqref{right} of the steps above, we arrive at the inequality 
\[\int_{0}^{1} \big\langle \Lambda_{\mathbf t(s)}^{(\eps)}, \mathtt f_{\ast} \big\rangle \, ds \geq 1.  \] 
Now let $\eps \rightarrow 0$. It follows from \eqref{t-diagonal} in Proposition \ref{p-2} \eqref{nut-partb} that for $\mathbf t(s)$ as in \eqref{def-t(s)}, there exists a positive function $C_0(\mathbf t(s))$, integrable on $[0,1]$, such that
\[ \sup_{\eps > 0} \bigl| \langle \Lambda_{\mathbf t}^{(\eps)}, \mathtt f_{\ast} \rangle \bigr| \leq C_0(\mathbf t(s))  \; || \mathtt f_{\ast}||_{\infty}, \] hence by \eqref{def-nu} and the dominated convergence theorem,
\[ \int_{0}^{1} \big\langle \Lambda_{\mathbf t(s)}, \mathtt f_{\ast} \big\rangle \, ds =  \lim_{\eps \rightarrow 0}\int_{0}^{1} \big\langle \Lambda_{\mathbf t(s)}^{(\eps)}, \mathtt f_{\ast} \big\rangle \, ds \geq 1. \]  
But our previous conclusion \eqref{will-contradict} gives that the integrand on the left hand side above is identically zero, hence the integral on the left is zero! This leads to an obvious contradiction, completing the proof of 
Theorem \ref{t-2}.
\end{proof}

\subsection{Continuity of $F^{[\mathtt f]}$} \label{cty-section}  
\noindent It remains to prove the continuity of the function $F^{[\mathtt f]}$ in \eqref{def-F}.
\begin{lemma} \label{cty-lemma}
For any $\mathtt f \in \mathcal V$, the function $F^{[\mathtt f]}: (0,1)^v \rightarrow \mathbb R$ defined in \eqref{def-F} is continuous. 
\end{lemma}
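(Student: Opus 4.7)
The plan is to apply the dominated convergence theorem, but only after performing a change of variables that eliminates the $\mathbf t$-dependence from the support of $\widehat{\mathtt f}$ in the integrand. The naive attempt to dominate the original integrand uniformly in $\mathbf t$ fails: the region where $\widehat{\mathtt f}(-\eta_1-t_1\xi,\ldots,-\eta_v-t_v\xi) \neq 0$ has $\xi$-dependent width and shifts with $\mathbf t$, and the ensuing bound loses the tight matching $\eta_i \approx -t_i\xi$ that made the integral $\mathscr{I}$ in \eqref{2.10} converge.

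First I would fix $\mathbf t_0 \in (0,1)^v$ and pick constants $0 < a < b < 1$ such that $\mathbf t_0$ lies in the interior of $[a,b]^v$; it suffices to prove continuity on this compact box. Then, in the absolutely convergent integral \eqref{def-F}, I would carry out the change of variables $\zeta_i := \eta_i + t_i \xi$ for each $i$ (with $\xi$ fixed). Since this is just a translation in each $\eta_i$ the Jacobian is $1$, and the representation becomes
\begin{equation*}
F^{[\mathtt f]}(\mathbf t) = \int_{\mathbb R^{v+1}} \widehat\mu(\xi) \, \widehat{\mathtt f}(-\zeta_1,\ldots,-\zeta_v) \prod_{i=1}^{v} \widehat\mu(\zeta_i - t_i \xi) \, d\xi \, d\zeta_1 \cdots d\zeta_v .
\end{equation*}
The decisive gain is that the integrand is now supported in the $\mathbf t$-independent strip $\{|\zeta_i| \leq R, \ 1 \leq i \leq v\}$, where $R$ is chosen so that $\supp(\widehat{\mathtt f}) \subseteq [-R,R]^v$.

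Next I would produce a dominating function uniform in $\mathbf t \in [a,b]^v$. For $|\xi| > 2R/a$ and $|\zeta_i| \leq R$, one has $|\zeta_i - t_i \xi| \geq t_i|\xi| - R \geq a|\xi|/2$, so by \eqref{star}, $|\widehat\mu(\zeta_i - t_i \xi)| \leq C(1 + a|\xi|/2)^{-\beta} \leq C_{a,R,\beta}(1+|\xi|)^{-\beta}$. For $|\xi| \leq 2R/a$ the trivial bound $|\widehat\mu| \leq C$ is already dominated by $C_{a,R,\beta}(1+|\xi|)^{-\beta}$ after enlarging the constant. Combining these, the integrand is pointwise bounded (uniformly in $\mathbf t \in [a,b]^v$) by
\begin{equation*}
C_{a,R,\beta}(1 + |\xi|)^{-\beta(v+1)}\, \|\widehat{\mathtt f}\|_\infty \, \mathbf 1_{[-R,R]^v}(\zeta_1,\ldots,\zeta_v),
\end{equation*}
which is integrable on $\mathbb R^{v+1}$ because $(v+1)\beta > 1$ and the $\zeta$-factor has compact support.

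Finally, pointwise continuity of the integrand in $\mathbf t$ follows since $\widehat\mu$ is continuous on $\mathbb R$ (the Fourier transform of a finite Borel measure). Dominated convergence then gives $F^{[\mathtt f]}(\mathbf t) \to F^{[\mathtt f]}(\mathbf t_0)$ as $\mathbf t \to \mathbf t_0$, proving continuity on $(0,1)^v$. The only subtle point—and the one I would flag as the main obstacle—is the choice of dominator: one must exploit the compact support of $\widehat{\mathtt f}$ in the new variables $\zeta_i$ (rather than in $\eta_i$), since otherwise the ``width" of the support in $\eta_i$ grows linearly in $|\xi|$ and the uniform integrable bound fails for $\beta$ near $1/(v+1)$.
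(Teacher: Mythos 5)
Your proof is correct. It differs from the paper's argument in one technical respect: you perform the change of variables $\zeta_i = \eta_i + t_i\xi$ so that the support of $\widehat{\mathtt f}$ becomes the fixed box $[-R,R]^v$, and then apply the dominated convergence theorem \emph{once} to the full $(v+1)$-dimensional integral, with a dominator uniform over a compact box $[a,b]^v$ of parameters. The paper instead keeps the original variables, writes $F^{[\mathtt f]}(\mathbf t) = \int \widehat{\mu}(\xi)\, G(\xi,\mathbf t)\, d\xi$ by Fubini, proves continuity of $\mathbf t \mapsto G(\xi,\mathbf t)$ for each fixed $\xi$ by a first (easy) application of dominated convergence on the compact $\eta$-domain, and then dominates only the one-dimensional outer integrand by $C(R,\beta,\kappa)\|\widehat{\mathtt f}\|_\infty (1+|\xi|)^{-(v+1)\beta}$, reusing the estimate \eqref{J-estimate}. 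Both routes rest on the same two ingredients (compact support of $\widehat{\mathtt f}$ and the decay \eqref{star} giving an integrable $(1+|\xi|)^{-(v+1)\beta}$ bound), and your diagnosis of why a naive one-shot domination in the original variables fails — the union over $\mathbf t$ of the supports has $\eta_i$-width growing linearly in $|\xi|$, costing a factor of $|\xi|^v$ — is accurate; the paper's iterated-integral structure sidesteps exactly that issue, while your translation of variables removes it at the source. Either argument is complete and the two are of comparable length.
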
 
\begin{proof} 
Let $\mathbf s \in (0,1)^v$. We need to show that $F^{[\mathtt f]}(\mathbf t) \rightarrow F^{[\mathtt f]}(\mathbf s)$ as $\mathbf t \rightarrow \mathbf s$. Since each entry of $\mathbf s$ is bounded away from zero, we may assume without loss of generality that $\mathbf t \in [\kappa, 1]^v$ for some $\kappa > 0$. The integral in \eqref{def-F} has been shown to be absolutely convergent in the proof of Proposition \ref{p-2}\eqref{nut-parta}, hence we may interchange the order of integration to write $F$ as 
\begin{align} F^{[\mathtt f]}(\mathbf t) &= \int_{\mathbb R} \widehat{\mu}(\xi) G(\xi, \mathbf t) d \xi \text{ where } \label{F-2} \\ G(\xi, \mathbf t) &:= \int_{\mathbb R^v} \Bigl[ \prod_{i=1}^{v} \widehat{\mu}(\eta_i) \Bigr] \widehat{\mathtt f}(-\eta_1 - t_1 \xi, \cdots, -\eta_v - t_v \xi) d\eta_1 \cdots d\eta_v. \label{def-G} \end{align}  
Since $\mathtt f \in \mathcal V$, there exists $R > 0$ such that $\text{supp}(\widehat{\mathtt f}) \subseteq [-R, R]^v$. Thus for every fixed $\xi$, the integrand in \eqref{def-G} is bounded, and the domain of integration in \eqref{def-G} is the compact set $\prod_{i=1}^{v}[-t_i \xi - R,-t_i \xi + R] \subseteq \mathbb R^v$. Hence by the dominated convergence theorem, the function $\mathbf t \mapsto G(\xi, \mathbf t)$ is continuous on $(0,1)^v$ for every fixed $\xi$.  Thus the integrand of the defining integral of $F^{[\mathtt f]}$ in \eqref{F-2} is a continuous function of $\mathbf t \in (0,1)^v$. We aim to show that this integrand is bounded above by an integrable function in $\xi$ that depends only on $\kappa$ but is otherwise independent of $\mathbf t \in [\kappa, 1]^v$. The desired continuity of $F^{[\mathtt f]}$ would then follow from the dominated convergence theorem. 
\vskip0.1in
\noindent The integrand in \eqref{F-2} has already been estimated in the proof of Proposition \ref{p-2} \eqref{nut-parta}. Specifically, the estimation leading up to \eqref{J-estimate}  shows that 
\begin{align*} \bigl| \widehat{\mu}(\xi) G(\xi, \mathbf t) \bigr| \leq |\widehat{\mu}(\xi)| \mathscr{J}(\xi; \mathbf t) &\leq C(R, \beta) ||\widehat{\mathtt f}||_{\infty} (1 + |\xi|)^{-\beta} \prod_{i=1}^{v} (1 + t_i |\xi|)^{-\beta} \\
&\leq C(R, \beta, \kappa) ||\widehat{\mathtt f}||_{\infty} (1 + |\xi|)^{-(v+1)\beta}.
\end{align*} 
The last expression is independent of $\mathbf t$ and integrable on $\mathbb R$, since $(v+1)\beta > 1$. This completes the proof. 
\end{proof} 
\section{Cantor-type sets avoiding linear patterns}

\subsection{A random Cantor construction} \label{Cantor-construction-section} 
We now turn to the complementary issue of constructing large sets avoiding linear patterns. An important tool here is a small variant of a Cantor-like construction due to Shmerkin \cite{S17}, which yields such avoiding sets. We begin by recalling the basic features of this construction from \cite[Section 2.1]{S17}.
\vskip0.1in
\noindent Let $\{L_n : n \geq 1\}$ and $\{M_n : n \geq 1\}$ be fixed sequences of positive integers, with $1\le L_n\le M_n$ and $2\le M_n$ for all $n\in\nn$. Using the notation $[M] := \{0, 1, \cdots, M-1\}$, we set 
\[
\Sigma_n := \Bigl\{\mathbf j:=(j_1,\ldots,j_n):\ j_k\in [M_k]  \text{ for each } k\in\{1,\ldots,n\} \Bigr\} \; \text{ and } 
\; \Sigma^*=\bigcup_{n=0}^\fz \Sigma_n.
\] 
We note that $\Sigma_0$ consists of the empty word $\emptyset$.
Each multi-index $\mathbf j=(j_1,\ldots,j_n)\in\Sigma_n$ is associated with an interval
\begin{equation} \label{basic-interval}  I_{\mathbf j}:=\left[\sum_{k=1}^n\frac{j_k}{M_1\cdots M_k},
\frac{1}{M_1\cdots M_n}+\sum_{k=1}^n\frac{j_k}{M_1\cdots M_k}\right] \subsetneq [0,1]. \end{equation} 
The Cantor set $E$ that we construct will be the countable intersection of a nested sequence of sets $E_n$, with \begin{equation} E_1 \supsetneq E_2 \supsetneq \cdots \supsetneq E_n \supsetneq E_{n+1} \supsetneq \cdots \supsetneq E, \qquad E = \bigcap_{n=1}^{\infty} E_n. \label{def-Cantor} \end{equation}  Each set $E_n$ will be a finite union of intervals of the form $I_{\mathbf j}$, for a choice of multi-indices $\mathbf j \in \Sigma_n$ to be specified.  
\vskip0.1in
\noindent Let $\{X_{\mathbf j}: \mathbf j \in \Sigma^{\ast}\}$ be a family of independent random sets obeying the following properties:
\begin{enumerate}[(I)]
\item For each $n \geq 0$ and $\mathbf j \in \Sigma_n$, the set $X_{\mathbf j}$ is a subset of $[M_{n+1}]$ with $\#(X_{\mathbf j})=L_{n+1}$, almost surely.
\item For each $a\in [M_{n+1}]$ and $\mathbf j \in \Sigma_n$, $\pp(a\in X_{\mathbf j})=L_{n+1}/M_{n+1}$.
\end{enumerate} \label{indep-criteria} 
As in \cite{S17}, we do not assume that $X_{\mathbf j}$ is chosen uniformly among all subsets of $[M_{n+1}]$ of size $L_{n+1}$. Each realization of the sequence of random sets $\{X_{\mathbf j} : \mathbf j \in \Sigma^{\ast}\}$ yields a sequence of multi-indices $\{\cj_n : n \geq 1\}$ in a natural way. Specifically, for every $n \geq 1$, 
$$\cj_n :=\{(j_1,\ldots,j_n) \in \Sigma_n:\ j_{k+1}\in X_{j_1\ldots j_k}
\mbox{ for all } k=0,\ldots,n-1\} \subseteq \Sigma_n.$$
Each collection $\cj_n$ in turn leads to a choice of basic intervals $\{I_{\mathbf j} : \mathbf j \in \cj_n\}$, where $I_{\mathbf j}$ is defined as in \eqref{basic-interval}. We use these to define the Cantor iterates $E_n$ and the Cantor-like set $E$:
\[ E_n = \bigcup_{\mathbf j \in \cj_n} I_{\mathbf j} \quad \text{ and } \quad E = \bigcap_{n=1}^{\infty} E_n. \] 
The recursive definition of $\cj_n$ ensures that the projection of $\cj_{n+1}$ onto the first $n$ coordinates yields $\cj_n$. In terms of the construction, this means that $E_{n+1} \subsetneq E_n$, as claimed in \eqref{def-Cantor}. From the assumptions (I) and (II), we have
$$|\cj_n|=L_1\cdots L_n \mbox{ almost surely,}$$
and that for every fixed multi-index $\mathbf j \in \Sigma_n$, 
\begin{equation}\label{pp}
\pp((\mathbf j, j_{n+1})\in\cj_{n+1})={\bf1}_{\cj_n}(\mathbf j) \frac{L_{n+1}}{M_{n+1}}
\mbox{ for all } j_{n+1} \in\{0,1,\ldots,M_{n+1}-1\}, 
\end{equation}
where ${\bf 1}_A(\cdot)$ the indicator function of the set $A$. Let us define functions
\begin{equation*}
\mu_n (x) :=\frac{M_1\cdots M_n}{L_1\cdots L_n}\sum_{\mathbf j \in\cj_n}{\bf1}_{I_\mathbf j}(x),
\end{equation*}
so that $\mu_n$ is a probability density function on $E_n$. By the Carath\'eodory Extension Theorem, there exists a probability measure $\mu$ supported on $E$ such that
\begin{equation} \label{mu} \mu(I_\mathbf j)=\mu_n(I_\mathbf j)=\frac1{L_1\cdots L_n}\mbox{ for all }\mathbf j\in\cj_n. \end{equation} 
In particular, $\mu_n\to\mu$ weakly.
\vskip0.1in
\noindent The randomness built into the construction of $\mu$ allows it to have maximal Fourier decay subject to the Hausdorff dimension of its support $E$. This is the content of the following theorem during to Shmerkin \cite[Theorem 2.1]{S17}.

\begin{theorem} \cite[Theorem 2.1]{S17} \label{t-A}
Let $\mu$ be the random Cantor measure supported on the Cantor-like set $E$, as described above. Suppose that the sequence $\{M_n : n \geq 1\}$ satisfies
\begin{equation}\label{log1}
\lim_{n\to\fz}\frac{\log M_{n+1}}{\log(M_1\cdots M_n)}=0.
\end{equation}
Fix any $\sz>0$ such that
\begin{equation}\label{log2}
\sz<\liminf_{n\to\fz}\frac{\log L_1\cdots L_n}{\log(M_1\cdots M_n)}.
\end{equation}
Then almost surely there exists a constant $C_{\sigma}>0$ such that
\begin{equation} |\wh\mu(k)|\le C_{\sigma} (1 + |k|)^{-\sigma/2}\mbox{ for all }k\in\zz. \label{log3} \end{equation} 
\end{theorem}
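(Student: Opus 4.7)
My approach would exploit the natural martingale structure of the sequence $\{\widehat{\mu}_n(k)\}_{n \ge 1}$ for each fixed frequency $k \in \mathbb{Z}$. From \eqref{pp} one readily verifies that $E[\mu_{n+1} \mid \cj_n] = \mu_n$ as random measures, so for every fixed $k$ the sequence $(\widehat{\mu}_n(k))_n$ is a complex-valued martingale with respect to the filtration generated by $\{X_{\mathbf j} : \mathbf j \in \Sigma_m, \; m \le n-1 \}$. The martingale increments decompose as
\[ D_n(k) := \widehat{\mu}_{n+1}(k) - \widehat{\mu}_n(k) = \sum_{\mathbf j \in \cj_n} Y_{\mathbf j}(k), \]
a sum over $\mathbf j \in \cj_n$ of conditionally independent, mean-zero random variables, where each $Y_{\mathbf j}(k)$ depends only on $X_{\mathbf j}$ and is bounded in absolute value by a constant multiple of $(L_1 \cdots L_n)^{-1}$.

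Next, I would bound $\widehat{\mu}_N(k)$ by a subgaussian martingale concentration inequality of Azuma--Hoeffding type, refined using a conditional variance estimate:
\[ \pp\bigl(|\widehat{\mu}_N(k)| > t\bigr) \le 2 \exp\!\bigl( -c \, t^2 / V_N(k) \bigr), \qquad V_N(k) := \sum_{n=0}^{N-1} E\bigl[ |D_n(k)|^2 \mid \cj_n \bigr]. \]
The key refinement is that $V_N(k)$ can be estimated more sharply than the naive bound $\sum_n (L_1 \cdots L_n)^{-1}$ by exploiting the Fourier cancellation inside each interval $I_{\mathbf j}$: when $|k|/(M_1 \cdots M_n) \gg 1$, the Fourier mass $\int_{I_{\mathbf j}} e^{-ikx}\, dx$ is already tiny, damping each $Y_{\mathbf j}(k)$. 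Writing these contributions out and splitting the range of $n$ into the two regimes $|k| \le M_1 \cdots M_n$ and $|k| > M_1 \cdots M_n$ is where the ``correct'' decay exponent $\sigma$ should emerge.

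For the almost sure uniform-in-$k$ bound, I would choose a truncation level $N = N(k)$ to be the smallest integer with $M_1 \cdots M_N \ge |k|^{A}$ for an appropriately large constant $A$, and decompose $\widehat{\mu}(k) = \widehat{\mu}_N(k) + \sum_{n \ge N}\! D_n(k)$. Hypothesis \eqref{log1} is crucial at this step, as it guarantees that $M_1 \cdots M_N$ does not overshoot $|k|^{A'}$, keeping $L_1 \cdots L_N$ comparable to $|k|^{\sigma'}$ for $\sigma'$ just below the liminf in \eqref{log2}. The tail $\sum_{n \ge N} D_n(k)$ is controlled trivially by $\sum_{n \ge N} 2/(L_1 \cdots L_n)$, which decays faster than $|k|^{-\sigma/2}$ by \eqref{log2}. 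The main term $\widehat{\mu}_N(k)$ is handled by applying the concentration inequality at threshold $t = \tfrac{1}{2} |k|^{-\sigma/2}$, which yields tail probability at most $\exp(-c |k|^\eta)$ for some $\eta > 0$, provided $\sigma$ is strictly below the liminf in \eqref{log2}. A union bound over $|k| \le 2^m$ combined with Borel--Cantelli over $m$ then gives the almost sure bound \eqref{log3}.

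The main obstacle I anticipate is carrying out the variance estimate with enough precision to saturate the optimal exponent $\sigma$ allowed by \eqref{log2}. A crude Hoeffding bound on $|D_n(k)|$ gives Fourier decay controlled by the \emph{half} of the liminf in \eqref{log2}, which would be too weak; one must combine the Fourier-cancellation damping of $|Y_{\mathbf j}(k)|$ at high frequencies with the near-orthogonality of the increments across scales $n$, and verify that condition \eqref{log1} prevents any single scale from dominating. Managing the constants $C_\sigma$ so that they do not explode as $\sigma$ approaches the liminf in \eqref{log2} is also a delicate point, and is the reason the statement only asserts \eqref{log3} for $\sigma$ strictly smaller than that liminf.
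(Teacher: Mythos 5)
Your overall scheme---viewing $\widehat\mu_n(k)$ as a martingale in $n$, decomposing the increment $D_n(k)$ as a conditionally independent sum over $\mathbf j\in\cj_n$, applying a Hoeffding-type concentration bound, and then running a dyadic union bound plus Borel--Cantelli---is indeed the skeleton of Shmerkin's argument in \cite{S17}. Two points need correction, though.

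First, your deterministic tail bound is wrong: you claim $|D_n(k)|\le 2/(L_1\cdots L_n)$, but there are $L_1\cdots L_n$ summands $Y_{\mathbf j}(k)$, each only of size $\lesssim (L_1\cdots L_n)^{-1}$, so the triangle inequality gives $|D_n(k)|=O(1)$, not $O((L_1\cdots L_n)^{-1})$, and the tail $\sum_{n\ge N}D_n(k)$ does not shrink for free. What actually makes the tail small is a Lipschitz/phase-cancellation estimate, not a cardinality count: since $\mu_{n+1}$ and $\mu_n$ assign equal mass to each $I_{\mathbf j}$ and $I_{\mathbf j}$ has length $\delta_n=(M_1\cdots M_n)^{-1}$, one writes $Y_{\mathbf j}(k)=\int_{I_{\mathbf j}}(e^{-ikx}-e^{-ika_{\mathbf j}})\,d(\mu_{n+1}-\mu_n)$ and gets $|Y_{\mathbf j}(k)|\le 2|k|\delta_n/(L_1\cdots L_n)$, hence $|D_n(k)|\le 2|k|/(M_1\cdots M_n)$. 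With your choice $M_1\cdots M_N\ge|k|^A$ this sums geometrically to $\lesssim|k|^{1-A}$, which is the correct (and necessary) way to handle $n\ge N(k)$.

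Second, the worry in your final paragraph misreads the target. The theorem claims $|\widehat\mu(k)|\lesssim(1+|k|)^{-\sigma/2}$ with $\sigma$ strictly below the $\liminf$ in \eqref{log2}, so ``half the liminf'' is exactly the exponent you want, not a shortfall. The genuinely too-weak estimate is the one that ignores the Fourier damping altogether: a bare Hoeffding on $|Y_{\mathbf j}(k)|\lesssim(L_1\cdots L_n)^{-1}$ gives $|D_n(k)|\lesssim(\log)^{1/2}(L_1\cdots L_n)^{-1/2}$ with no $k$-dependence and hence no decay at all. The role of the damping factors $\min\bigl(1,\,M_1\cdots M_{n+1}/|k|\bigr)$ for $n<N(k)$ and $\min\bigl(1,\,|k|/(M_1\cdots M_n)\bigr)$ for $n\ge N(k)$ is to localize the sum over $n$ at the crossover scale $M_1\cdots M_n\asymp|k|$, where $1/\sqrt{L_1\cdots L_n}\asymp|k|^{-\sigma/2}$ under \eqref{log2}; condition \eqref{log1} ensures this crossover can be hit without overshoot. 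Once these bounds on $|Y_{\mathbf j}(k)|$ are in place, a plain Azuma--Hoeffding with those almost-sure envelopes already delivers the result; the conditional-variance (Freedman) refinement you mention is not needed.
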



\subsection{Avoiding sets in the integers} 
We observe that the construction described in Section \ref{Cantor-construction-section} is very general, in the sense that under \eqref{log1}, any choice of sets $\{X_{\mathbf j}: \mathbf j \in \Sigma^{\ast} \}$ obeying (I) and (II) would lead almost surely to a probability measure $\mu$ that meets the Fourier decay condition \eqref{log3}. In our applications, the set $E$ will need to avoid certain rational linear patterns; hence the corresponding avoidance features have to be built into the sets $X_{\mathbf j}$. The building blocks for the sets $X_{\mathbf j}$ are certain deterministic constructions that ensure avoidance of nontrivial solutions of certain translation-invariant equations with integer coefficients.
In this section, we describe the construction of such sets in the integers, which will later be transferred to the continuum. These constructions are based on a fundamental idea of Behrend \cite{B46}. We state and prove here the version that we need, with special attention to the implicit constants that will become important later.  

\begin{proposition}\label{p-1}
Let $\cf$ and $\cf_v(\nn)$ denote the classes of translation-invariant linear functions with integer coefficients given by \eqref{def-cF} and \eqref{def-Fn} respectively. Fix any finite subcollection $\cg \st\cf$, and 
let $A$ be any large integer obeying
\begin{equation} A > 2 \max \lf\{m_0\;  \Biggl| \; \exists \nu \geq 2, f \in \cg \cap \cf_{\nu}(\mathbb N) \text{ with } f(x_0, \ldots, x_v):=m_0x_0-\sum_{i=1}^v m_ix_i \r\}. \label{def-A} \end{equation} 
Then, for every sufficiently large integer $N \geq N_0(A)$, there exists a set 
$Y_N = Y_N(\cg) \st [N]$ with
\begin{equation} 
\#(Y_N)>Ne^{-(A+4)\sqrt{\ln N}} \label{YN-large} 
\end{equation} 
and the following avoidance property. For every $f\in\cg \cap \cf_{\nu}(\mathbb N)$ and $x_0, \cdots, x_v \in Y_N$, 
\begin{equation} \label{no-zeros} f(x_0, x_1, \cdots, x_{\nu}) = 0  \quad \text{ implies } \quad x_0 = x_1 = \cdots = x_v. \end{equation} 
The constant $N_0(A)$ can be chosen to be $N_0(A) = 2^{C_0A^2}$ for some large absolute integer $C_0 > 0$. 
\end{proposition}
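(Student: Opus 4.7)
The plan is to adapt Behrend's classical sphere construction \cite{B46} so that the resulting set simultaneously avoids every equation in the finite class $\cg$, with all quantitative losses tracked in terms of the bound $A$ in \eqref{def-A}. For integer parameters $B$ and $k$ to be chosen at the end, every $n \in [B^k]$ admits a unique base-$B$ expansion $n = \sum_{j=0}^{k-1}a_j B^j$ with $a_j \in [B]$. I will restrict attention to those $n$ whose digit vector $\mathbf a(n) := (a_0, \ldots, a_{k-1})$ lies in $[D]^k$, where $D := \lfloor B/A \rfloor$. The hypothesis \eqref{def-A} that $A > 2m_0$ for the coefficient sum of every function in $\cg$ then yields a uniform no-carry condition $m_0 D < B$ across the entire family. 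Since the squared Euclidean length $|\mathbf a|^2$ takes at most $kD^2$ integer values on $[D]^k$, pigeonhole produces an $R$ for which the digit-sphere $\cs_R := \{\mathbf a \in [D]^k : |\mathbf a|^2 = R\}$ has cardinality at least $D^{k-2}/k$. I set $Y_N := \{n \in [B^k] : \mathbf a(n) \in \cs_R\}$, so $Y_N \subseteq [B^k] \subseteq [N]$ once $B^k \le N$.

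To verify \eqref{no-zeros}, fix $f \in \cg \cap \cf_v(\mathbb N)$ with coefficients $(m_0, \ldots, m_v)$ and suppose $m_0 x_0 = m_1 x_1 + \cdots + m_v x_v$ with each $x_i \in Y_N$ having digit vector $\mathbf a_i \in \cs_R$. Every digit of each $x_i$ is at most $D-1$, hence the digitwise sum $m_1 a_{1,j} + \cdots + m_v a_{v,j} \le m_0(D-1) < B$ triggers no base-$B$ carry on either side of the identity. The equation therefore decouples into $m_0 \mathbf a_0 = \sum_{i=1}^v m_i \mathbf a_i$ in $\mathbb Z^k$, and dividing by $m_0 = \sum_{i=1}^v m_i$ presents $\mathbf a_0$ as a convex combination of $\mathbf a_1, \ldots, \mathbf a_v$, each of which lies on the Euclidean sphere of squared radius $R$. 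Strict convexity of the Euclidean ball then forces $\mathbf a_0 = \mathbf a_1 = \cdots = \mathbf a_v$, whence $x_0 = x_1 = \cdots = x_v$, as required.

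It remains to choose $B$ and $k$ so as to meet the lower bound \eqref{YN-large}. The natural Behrend-type choice is $B := \lceil e^{\sqrt{\ln N}} \rceil$ and $k := \lfloor \ln N / \ln B \rfloor$, which produces $B^k \le N$, $k \le \sqrt{\ln N}$, $k \ln B \ge \ln N - \ln B$, and $D \ge B/(2A)$ once $B \ge 2A$. Taking logarithms,
\[ \ln \bigl( D^{k-2}/k \bigr) \ge (k-2) \bigl( \ln B - \ln(2A) \bigr) - \ln k \ge \ln N - 3 \ln B - k \ln(2A) - \ln k. \]
Using $\ln B \le \sqrt{\ln N} + O(1)$, $k \le \sqrt{\ln N}$ and $\ln(2A) \le A+1$ for $A \ge 2$, the right-hand side exceeds $\ln N - (A+4) \sqrt{\ln N}$ whenever $N$ is sufficiently large; exponentiating gives \eqref{YN-large}. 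The constraint $B \ge 2A$ reduces to $\sqrt{\ln N} \ge \ln(2A)$, i.e., $N \ge e^{(\ln(2A))^2}$, which is comfortably accommodated by $N_0(A) = 2^{C_0 A^2}$ for a sufficiently large absolute constant $C_0$.

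The main conceptual point is the uniformity of the no-carry argument: a single value of $A$ controls every $m_0$ appearing in $\cg$, so the avoidance \eqref{no-zeros} is automatic across the entire family once $A$ is fixed and does not depend on the cardinality of $\cg$. The principal obstacle is purely parameter bookkeeping---calibrating $B$ large enough that the digit arithmetic is carry-free for every $f \in \cg$, yet small enough that the Behrend-type density $D^{k-2}/k$ remains within the prescribed $e^{-(A+4)\sqrt{\ln N}}$ slack---but the optimization is the standard Behrend calculation once the strategy is in place.
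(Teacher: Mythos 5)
Your proof follows essentially the same Behrend-sphere strategy as the paper's: restrict digits in a base-$B$ expansion to a small sub-interval $[D]$ so that a single parameter $A>2m_0$ gives a uniform no-carry condition across $\cg$, pigeonhole on the squared Euclidean length of the digit vector to find a dense sphere, and note that a strict convexity argument (equivalently, the paper's equality case in the triangle inequality) forces any solution of $m_0x_0=\sum m_ix_i$ with all digit vectors on a common sphere to be constant; the paper sets the base to $Ad$ and restricts digits to $[d]$, which under $B=Ad$, $D=d$ is the same bookkeeping. The one slip is in your final inequality chain: replacing $\ln(2A)$ by the crude bound $A+1$ makes the leading $\sqrt{\ln N}$ terms cancel exactly against $(A+4)\sqrt{\ln N}$, leaving the negative lower-order contributions $-3\ln B$ and $-\ln k$ unaccounted for, so the stated inequality does not actually close; you should instead keep $\ln(2A)$ and observe that $A+1-\ln(2A)>0$ for $A\ge 2$, which supplies a positive margin of order $\sqrt{\ln N}$ that absorbs the $O(\ln\ln N)$ error terms for $N\ge N_0(A)$.
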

\noindent {\em{Remark:}} We observe that $Y_N(\cg)$ automatically precludes nontrivial zeros of $\cg$. Recall from the definition in page \pageref{def-nontrivial-zero} that a nontrivial zero vector of $f \in \cg \cap \cf_{v}(\mathbb N)$ has all distinct entries. In fact, Proposition \ref{p-1} yields the stronger conclusion that the only trivial zero of $f \in \cg$ admitted by $Y_N$ is the one with identical entries! 
\begin{proof}
Let $d, n\in\mathbb N$ be large integers to be specified shortly. Then any integer $y\in [(Ad)^n]$ admits a finite expansion with respect to the base $Ad$, i.e., 
there exists a unique sequence 
$\mathbf y = \mathbf y[y] =  (y_0,\ldots,y_{n-1})\in\{0,1,\ldots,Ad-1\}^n$ such that
$$y=y_0+y_1(Ad)+\cdots+y_{n-1}(Ad)^{n-1}.$$
For any integer $k \geq 0$, we set
\begin{equation} \label{def-Sk} 
\mathtt S_k(d,n):=\Bigg\{\ y\in [(Ad)^n]\; \Bigl| \;
\mathbf y[y] \in [d]^n \; \mbox{ and } \; \|\mathbf y[y]\|^2=\sum_{j=0}^{n-1}y_j^2=k
\Bigg\}.
\end{equation} 
The proof rests on the following two claims concerning these sets: 
\begin{align} 
&\text{ There exists $k_0 \in \{0, 1, 2, \ldots, n(d-1)^2\}$ such that $\# \bigl(\mathtt S_{k_0}(d, n) \bigr) > d^{n-2}/n$.  } \label{large-S} \\  
&\text{ Each $\mathtt S_k(d, n)$, if nonempty, avoids all nontrivial zeros of functions in $\cg$. } \label{avoid-S}
\end{align} 
Assuming these two claims for now, the proof is completed as follows. For sufficiently large $N\in \mathbb N$,
we choose $n:=\lfz\sqrt{\ln N}\rfz$ and $d\in \mathbb N$ such that
$(Ad)^n\le N<[A(d+1)]^n$, namely,
\begin{equation}\label{dan}
\frac{N^{1/n}}A-1<d\le\frac{N^{1/n}}A,
\end{equation}
and then set $Y_N:= \mathtt S_{k_0}(d,n)$. Then by \eqref{avoid-S}, the set $Y_N$ contains no nontrivial zero of any $f \in \cg$, establishing \eqref{no-zeros}.  
Further, using \eqref{large-S} we deduce that 
\begin{align}
\#Y_N&>\frac{d^{n-2}}n
>\frac{(N^{1/n}-A)^{n-2}}{nA^{n-2}}
=\frac{N^{1-2/n}}{nA^{n-2}}(1-AN^{-1/n})^{n-2} \notag \\
&=N \exp \Bigl[ \ln \left(N^{-\frac{2}{n}} (1-AN^{-\frac{1}{n}})^{n-2}/(nA^{n-2}) \right)  \Bigr] \notag \\  
&=N \exp \Biggl[-\frac{2\ln N}n - \ln n - (n-2) \Bigl[ \ln A-\ln(1-AN^{-1/n}) \Bigr] \Biggr] \notag  \\
&>Ne^{-(A+4)\sqrt{\ln N}}, \label{dan2}
\end{align}
where the last estimate holds for $N$ sufficiently large depending on $A$. This establishes \eqref{YN-large}, completing the proof of Proposition \ref{p-1}. We verify that $N = 2^{C_0A^2}$ allows for a choice of $d \in \mathbb N$ in \eqref{dan} and the estimate leading to \eqref{dan2}. 
\vskip0.1in
\noindent It therefore remains to prove the two claims \eqref{large-S} and \eqref{avoid-S}. We start with the former. By the definition \eqref{def-Sk} of $\mathtt S_k$, the entries of $\mathbf y \in \mathtt S_k$ are required to lie in $[d]$. Since $k = ||\mathbf y||^2$, this forces $0 \leq k \leq n(d-1)^2$. Thus the number of possible choices of $k$ such that $\mathtt S_k(d, n) \ne \emptyset$ is at most $n(d-1)^2+1$. Further, the sets $\{\mathtt S_k(d, n) : k \geq 1\}$ are disjoint, with 
\[ \bigsqcup_k \mathtt S_k(d, n) = \bigl\{y : \mathbf y[y] \in [d]^n \bigr\}, \quad \text{ hence } \quad \sum_k \#(\mathtt S_k) = d^n\] Therefore, by the pigeonhole principle,
there exists $k_0\in\{0,1,\ldots, n(d-1)^2\}$ such that
\begin{equation}\label{sk}
\#\bigl( \mathtt S_{k_0}(d,n) \bigr)\ge\frac{d^n}{n(d-1)^2+1}>\frac {d^{n-2}}n,
\end{equation}
proving \eqref{large-S}. 
\vskip0.1in
\noindent We now turn to proving \eqref{avoid-S}. 
Suppose that $(x_0, x_1, \cdots, x_{\nu}) \in \bigl[ \mathtt S_k(d, n) \bigr]^{\nu+1}$ is a zero of a function $f \in \cg$ of the form 
$f(x_0, \ldots, x_v):=m_0x_0-\sum_{i=1}^v m_ix_i$.  For $0 \leq i \leq \nu$, let $\mathbf x_i = (x_{i,0}, \ldots, x_{i, n-1})$ denote the vector of digits in $x_i$, in base $Ad$. Then $||\mathbf x_i|| = \sqrt{k}$ and  $0 \leq x_{i,j} < d$ for all $1 \leq i \leq \nu$ and $0 \leq j \leq n-1$. Using these digits, the equation
$m_0x_0=\sum_{i=1}^vm_ix_i$ can be rewritten as 
\[ m_0 \sum_{j=0}^{n-1} x_{0, j}(Ad)^{j} = \sum_{i=1}^{\nu} m_i \sum_{j=0}^{n-1} x_{i,j}(Ad)^{j}, \; \text{ or } \; \sum_{j=0}^{n-1} \bigl( m_0 x_{0, j} \bigr) (Ad)^{j} = \sum_{j=0}^{n-1} \Bigl[ \sum_{i=1}^{\nu} m_{i} x_{i,j} \Bigr] (Ad)^j. \] 
Since $A > 2m_0 =2( m_1 + \cdots+ m_v)$, it follows that, for any $j\in\{0,\ldots,n-1\}$, we have that $0 \leq m_0 x_{0, j} < m_0d < Ad$, and also $0 \leq \sum_{i=1}^{v} m_i x_{i, j} < d \sum_{i=1}^{v} m_i < Ad$. The uniqueness of the digit expansion then ensures that  the vectors $\{\mathbf x_i : 0 \leq i \leq \nu\}$, all of which lie on the sphere of radius $\sqrt{k},$ additionally satisfy the linear relation:
$$ m_0\mathbf x_0=\sum_{i=1}^v m_i\mathbf x_i.$$
By the length condition and the triangle inequality, we find that
$$m_0\sqrt k=\|m_0\mathbf x_0\|=\lf\|\sum_{i=1}^vm_i\mathbf x_i\r\|
\le\sum_{i=1}^vm_i\|\mathbf x_i\|=m_0\sqrt k.$$
Since the left and right ends match in the displayed chain of inequalities above, we deduce that the triangle inequality used in the intermediate step must in fact be an equality. This occurs only if $\mathbf x_1,\ldots,\mathbf  x_v$ are proportional.
Since we already know that 
$\|\mathbf  x_1\|=\cdots=\|\mathbf x_v\| = \sqrt{k}$, this forces $\mathbf x_1=\cdots=\mathbf x_v$,
and hence $x_0=x_1=\cdots=x_v$. This is a trivial solution of $f=0$.
Thus $\mathtt S_k(d,n)$ contains no nontrivial solution of $f=0$, for any $f\in\cg$. This is the desired conclusion \eqref{avoid-S}.  
\end{proof}

\subsection{Partial avoidance in the continuum} \label{partial-avoidance-section} 
Let $\cg \subsetneq \cf$ be a finite collection of translation-invariant linear equations with integer coefficients. In Proposition \ref{p-1}, we obtained a large subset of the integers that simultaneously avoids all nontrivial zeros of functions in $\cg$. Using these discrete building blocks and adapting an idea in  \cite[Lemma 2.2]{S17}, we now construct sets on the real line that embody a partial avoidance feature. Specifically, the sets that we construct in this section are unions of specially chosen small intervals. Any single interval contains nontrivial solutions of  $\cg$ of course, but the main conclusion is that points from distinct intervals fail to constitute nontrivial solutions of $\cg$. In the next section, we will repeat this argument on many scales to construct a Cantor-like set on $[0,1]$ with the full avoidance feature, as required by Theorem \ref{t-1}.  
\vskip0.1in   
\noindent Given $M\in\mathbb N$, $j\in [M]$, and $Y\st [M]$, we set 
\begin{equation} \label{def-I0}
\mathtt I_{M,j}:= \Bigl[\frac{j}{M},\frac{j+1}{M} \Bigr]\st \mathbb R/\mathbb Z \quad \text{ and } \quad \mathbb I(Y):=\bigcup_{j\in Y} \mathtt I_{M,j}\st [0,1]. \end{equation}  Thus $\mathbb I(Y)$ is the union of intervals in $[0,1]$, each of length $M^{-1}$, whose left endpoints are of the form $Y/M$. If  $j \in \mathbb Z$, then we define \[ \mathtt I_{M, j} := \mathtt I_{M, j'}, \quad \text{ where } j' \in [M] \text{ and }  j' = j \text{(mod $M$)}. \]  Thus, if $\ell \in \mathbb Z$, then $\mathbb I(Y+ \ell)$ is defined as 
\begin{equation}  \mathbb I(Y + \ell) := \bigcup_{j} \Bigl\{ \mathtt I_{M,j} : j \in Y+\ell (\text{mod } M) \Bigr\} \subseteq [0,1]. \label{def-I}  \end{equation}  

%
%

\begin{lemma}\label{l-1}
Given any finite set $\cg \st\cf$ as in Proposition \ref{p-1}, let the constant $A$ be as in \eqref{def-A}. 
Then,
for every sufficiently large integer $M \geq M_0(A)$, there exists a subset
$U_M = U_M(\cg) \subseteq [M]$ such that
\begin{enumerate}
\item[{\rm (i)}] $\#(U_M)>Me^{-(3A+4)\sqrt{\ln M}}$, 
\item[{\rm (ii)}] If a finite collection of points $\bigl\{x_0, x_1, \ldots, x_v \bigr\}\st \mathbb I(U_M+\ell)$ obeys the equation 
\begin{equation}  f(x_0, \cdots, x_v) =0 \quad \text{ for
some $f\in\cg \cap \cf_v(\mathbb N)$ and $\ell\in [M]$,}  \label{f-zero} 
\end{equation} 
then there exists $j\in U_M$ such that $\{x_0, x_1, \cdots, x_v \} \st \mathtt I_{M,j'}$, where $j' \in [M]$ and $j' = j+\ell$ (mod $M$).
\item[{\rm (iii)}] Elements of $U_M$ are at least $A$-separated modulo $M$, i.e. $|x - x' (\text{mod $M$})| \geq A$ for all $x, x' \in U_M$, $x \ne x'$. In particular, $U_M$ does not contain any two consecutive integers. 
\end{enumerate}
The constant $M_0(A)$ can be any integer larger than $(A+1)^2 2^{C_0A^2}$, where $C_0$ is the absolute constant in the statement of Proposition \ref{p-1}. For instance, the choice of $M_0(A) = 2^{C_0A^4}$ works. 
\end{lemma}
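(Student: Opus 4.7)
The plan is to lift the Behrend-like set $Y_N$ from Proposition \ref{p-1} to $[M]$ by scaling by $A$. Given $M \ge M_0(A) := (A+1)^2 2^{C_0 A^2}$, set $N := \lfz M/A^2 \rfz$ and
\[ U_M := \{Ay : y \in Y_N\} \subseteq [AN] \subseteq [M]. \]
With this choice we have $N \ge N_0(A) = 2^{C_0 A^2}$, so Proposition \ref{p-1} supplies $\#(Y_N) > Ne^{-(A+4)\sqrt{\ln N}}$ together with the discrete avoidance property. Condition (iii) is immediate: distinct elements of $U_M$ differ by a nonzero multiple of $A$ in $\zz$, and since $U_M \subseteq [AN] \subseteq [M/A]$ with $A \ge 2$, the $\zz$-distance between two such elements lies in $[A, M/2]$ and therefore coincides with their circular distance modulo $M$. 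For (i), using $N \ge M/(2A^2)$ and $\ln N \le \ln M$ gives $\#(U_M) = \#(Y_N) > (M/(2A^2)) e^{-(A+4)\sqrt{\ln M}}$, and the required bound $\#(U_M) > M e^{-(3A+4)\sqrt{\ln M}}$ reduces to $e^{2A\sqrt{\ln M}} > 2A^2$, which clearly holds throughout the given range of $M$.

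The heart of the lemma is property (ii), and the principal obstacle is handling the wrap-around introduced by the translation $\ell$ taken modulo $M$. Fix $f(x_0, \ldots, x_v) = m_0 x_0 - \sum_{i=1}^v m_i x_i \in \cg \cap \cf_v(\nn)$, so that $m_0 = \sum_{i=1}^v m_i < A/2$ by \eqref{def-A}. Suppose $x_i \in \mathtt I_{M, r_i}$ with $r_i = u_i + \ell \pmod{M}$ for some $u_i = A y_i \in U_M$ and $y_i \in Y_N$. Writing $u_i + \ell = r_i + M k_i$ with $k_i \in \{0,1\}$ (since $u_i, \ell \in [M]$), we have $x_i = (A y_i + \ell - M k_i + \eps_i)/M$ for some $\eps_i \in [0,1]$. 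Substituting into $f(x_0, \ldots, x_v) = 0$ and invoking translation-invariance $m_0 = \sum m_i$ to cancel the $\ell$-terms gives
\[ A \Bigl(m_0 y_0 - \sum_{i=1}^v m_i y_i \Bigr) = M \Bigl(m_0 k_0 - \sum_{i=1}^v m_i k_i \Bigr) + \Bigl(\sum_{i=1}^v m_i \eps_i - m_0 \eps_0 \Bigr), \]
where the error term in the last pair of parentheses has absolute value at most $m_0 < A/2$.

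The analysis splits on whether $c := m_0 k_0 - \sum_{i=1}^v m_i k_i$ vanishes. Because $k_i \in \{0,1\}$ and $m_0 = \sum m_i$, an elementary case check shows that $c = 0$ if and only if all $k_i$ are equal. If $c \ne 0$, then $|c| \ge 1$, so the right-hand side of the displayed equation has absolute value at least $M - m_0 > M/2$; on the other hand, the left-hand side is bounded by $A m_0 (N-1) \le m_0 M/A < M/2$, using $N \le M/A^2$ and $m_0 < A/2$. This is the key tension, and it rules out the wrap-around case $c \ne 0$. Hence $c = 0$, the equation reduces to $A(m_0 y_0 - \sum m_i y_i) = \mathrm{err}$ with $|\mathrm{err}| < A/2$, and the integer $m_0 y_0 - \sum m_i y_i$ must therefore vanish. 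Proposition \ref{p-1} applied to $(y_0, \ldots, y_v) \in Y_N^{v+1}$ now forces $y_0 = y_1 = \cdots = y_v$, whence all $u_i = A y_0$ and all $r_i$ coincide. Setting $j := A y_0 \in U_M$ and $j' := j + \ell \pmod{M}$ yields $\{x_0, \ldots, x_v\} \subseteq \mathtt I_{M, j'}$, completing the proof of (ii).
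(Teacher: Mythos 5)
Your proof is correct and follows essentially the same strategy as the paper: scale the Behrend set $Y_N$ by $A$ to land in $[M]$, substitute the digit expansions into $f=0$, and split into cases according to whether the $M$-coefficient (your $c$, the paper's $\sum_{i\in\ca}m_i-m_0\chi_0$) vanishes. The cosmetic differences are that you take $N=\lfloor M/A^2\rfloor$ rather than the paper's $\widetilde M=\lfloor M/(A+1)^2\rfloor+1$, you run the case analysis directly rather than wrapping it in a global contradiction, and you note explicitly that $c=0$ iff all the wrap-around indicators $k_i$ agree (a true fact, though not actually needed, since once Proposition \ref{p-1} yields $y_0=\cdots=y_v$ the indices $r_i=(Ay_0+\ell)\bmod M$ coincide automatically).
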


\begin{proof}
Let $\wz M:=\lfz\frac M{(A+1)^2}\rfz+1$.
If $M \geq M_0(A) > (A+1)^2 2^{C_0A^2}$, then by Proposition \ref{p-1}, there exists a set $Y_{\wz M}\st [\widetilde{M}]$ such that
\[ \# \bigl(Y_{\wz M} \bigr)>\wz M \exp \Bigl[-(A+4)\sqrt{\ln \wz M} \Bigr] \]
and $Y_{\wz M}$ contains no nontrivial zero of any $f\in\cg$.
Let $U_M:=A Y_{\wz M}:=\{Ax:\ x\in Y_{\wz M}\}$. 
The separation condition required by part (iii) of the lemma follows from this definition. 
\begin{align*} 
\#(U_M)=\#(Y_{\wz M}) &> \wz M \exp \Biggl[-(A+4)\sqrt{\ln \wz M} \Biggr] \geq \frac{M}{(A+1)^2} \exp \Bigl[-(A+4)\sqrt{\ln \wz M} \Bigr] \\ & > M \exp \Bigl[-2\ln (A+1)-(A+4)\sqrt{\ln M} \Bigr] \\ &>M \exp \Bigl[ -(3A+4)\sqrt{\ln M} \Bigr].
\end{align*} 
This concludes the proof of part (i). 
\vskip0.1in
\noindent Suppose now that $\{x_0, x_1, \ldots, x_v\} \subseteq \mathbb I(U_M + \ell)$ for some $\ell \in [M]$. Then for every $0 \leq i \leq v$, there exist 
$j_i\in Y_{\wz M}$ and $\dz_i\in[0,1)$ such that
\begin{equation*}
\left\{\displaystyle
\begin{array}{l@{\qquad}l}
\displaystyle x_i=\frac{Aj_i+\ell+\dz_i}M,& \mbox{ if }\ Aj_i+\ell<M,\\
\displaystyle x_i=\frac{Aj_i+\ell+\dz_i}M-1,& \mbox{ if }\ Aj_i+\ell\ge M,
\end{array}
\right. 
\end{equation*} 
This means that 
\begin{equation} 
\label{xi-exp} \qquad x_i = \frac{Aj_i+\ell+\dz_i}M - \chi_i , \qquad \text{ where } \qquad \chi_i= \begin{cases} 0 &\text{ if }  Aj_i+\ell< M,  \\ 1 &\text{ if }  Aj_i+\ell\ge M. \end{cases}
\end{equation}
Set $\ca:=\{i\in\{1,\ldots,v\}:\, \chi_i = 1\}.$
\vskip0.1in
\noindent If, in addition, $(x_0, x_1 \cdots, x_v)$ obeys the equation \eqref{f-zero}, with \[ f(x_0, \cdots, x_v) = m_0x_0 - \sum_{i=1}^vm_ix_i \]  for some $m_0, m_1, \cdots, m_v \in \mathbb N$ satisfying $m_0=\sum_{i=1}^vm_i$, then substituting \eqref{xi-exp} into this equation yields 
\begin{equation}\label{aa}
A \Biggl[ \sum_{i=1}^vm_ij_i - m_0j_0 \Biggr] + \Biggl[ \sum_{i=1}^vm_i\dz_i -m_0\dz_0 \Biggr]
-\lf(\sum_{i\in\ca} m_i  - m_0 \chi_0\r)M=0.
\end{equation}
We will show momentarily that \eqref{aa} implies the equality \begin{equation} m_0j_0 = \sum_{i=1}^vm_ij_i, \quad \text{i.e.} \quad f(m_0, m_1, \cdots, m_v) = 0. \label{YM-eqn} \end{equation} Assuming this for now, the proof is completed as follows.  Since $f \in \cg$ and $\{j_i: 0 \leq i \leq v \} \st Y_{\wz M}$, 
it follows from Proposition \ref{p-1} that for \eqref{YM-eqn} to hold, $(j_0, \ldots, j_v)$ has to be the identically constant solution, i.e., $j_0=\cdots=j_v=j$.
This in turn implies that $\{x_i: 0 \leq i \leq v\} \st \mathtt I_{M, j + \ell}$,
which is the conclusion of Lemma \ref{l-1}.  
\vskip0.1in
\noindent To deduce \eqref{YM-eqn} from \eqref{aa}, we proceed by contradiction, and consider two cases. Suppose first that  $\chi_0m_0 = \sum_{i \in \ca} m_i$. Since we have assumed that \eqref{YM-eqn} does not hold, it follows that 
\[A\le\lf|A\lf(\sum_{i=1}^vm_ij_i-m_0j_0\r)\r| \; \text{ whereas } \; \Bigl|\sum_{i=1}^vm_i\dz_i-m_0\dz_0 \Bigr|< \frac{A}{2} \text{ for any choice of $\delta_i \in [0,1)$}. \]
Thus \eqref{aa} cannot be satisfied. Next, assume that $\chi_0m_0 \ne \sum_{i \in \ca} m_i$. Then 
\begin{align*} M \bigl|\sum_{i\in\ca}m_i-\chi_i m_0 \bigr| &\ge M,  \text{ but} \\ 
\Biggl|A \Biggl[ \sum_{i=1}^vm_ij_i - m_0j_0 \Biggr] + \Biggl[ \sum_{i=1}^vm_i\dz_i -m_0\dz_0 \Biggr] \Biggr|
 &\le A \max \Bigl(m_0j_0, \sum_{i=1}^{v}m_i j_i \Bigr) + \frac{A}{2}\\ &\leq A^2\wz M + \frac{A}{2}< A^2 \left[ \frac {M}{(A+1)^2} + 1\right] + \frac{A}{2}<M,
\end{align*} 
This again implies that \eqref{aa} cannot hold. 
\end{proof}


\subsection{Large sets avoiding a finite number of rational linear patterns} 
\subsubsection{Proof of Theorem \ref{t-1}} 
\begin{proof}
Let $\cf$ be the collection of translation-invariant linear equations defined in \eqref{def-cF}, and let $\cg$ be a finite sub-collection of it. We aim to prove the existence of a probability measure $\mu$
such that $E = \supp(\mu)$ contains no nontrivial zero of any $f\in\cg$
and, for any $\eps>0$, there exists a constant $C_{\eps} > 0$ such that 
\begin{equation} \label{mu-Salem}  |\wh\mu(k)|\leq C_{\eps} (1 + |k|)^{-1/2+\eps}\mbox{ for all }k\in\zz. \end{equation} 
For this, we will follow the random Cantor construction in Section \ref{Cantor-construction-section}, with the random sets $\{X_{\mathbf j}: \mathbf j \in \Sigma^{\ast} \}$ chosen using Proposition \ref{p-1} and Lemma \ref{l-1}. 
\vskip0.1in
\noindent For sufficiently large integers $n\in \mathbb N$, let $U_n \subsetneq [n]$ be the set specified in Lemma \ref{l-1}. The conclusions of Lemma \ref{l-1} ensure that 
$\ln(\#U_n)/{\ln n}\to 1$ as $n\to\fz$. For any $n\in\nn$, we set 
\[  M_n :=N_0(A) + n  \quad \text{ and } \quad L_n := \#(U_{M_n}).  \] 
Such a choice of parameters obeys the condition \eqref{log1}. Further, 
$$\lim_{n\to\fz}\frac{\ln (L_1\cdots L_n)}{\ln(M_1\cdots M_n)}=1, \quad \text{ so that } \sigma = 1 - 2 \eps \text{ obeys \eqref{log2} for any $\eps > 0$.}$$
Let $\{\ell_\mathbf j:\ \mathbf j\in\Sigma^*\}$
be a sequence of independent random variables, with $\ell_\mathbf j$ distributed 
uniformly in $[M_{n+1}]$ for $\mathbf j\in\Sigma_n$. We set
\begin{equation*} X_\mathbf j:=U_{M_{n+1}}+\ell_\mathbf j \mod M_{n+1}, \ \mbox{ for any }\ \mathbf j\in\Sigma_n. \label{def-Xj} \end{equation*} 
As in \cite{S17}, one can verify that the random sets $\{X_\mathbf j: \mathbf j \in \Sigma^{\ast} \}$ are independent and satisfy the criteria (I) and (II) on page \pageref{indep-criteria}. 
\vskip0.1in
\noindent Let $\mu$ be the random Cantor measure defined in \eqref{mu}. By Theorem \ref{t-A}, $\mu$ obeys \eqref{mu-Salem}. 
It remains to show that $E = \supp \mu$ contains no nontrivial zero of any $f\in\cg$.
Indeed, suppose that $\{x_0, x_1, \cdots, x_v\}\subseteq E$ obeys $f(x_0, x_1, \cdots, x_v) = 0$ for some $f \in \cg$. Since the length $\delta_n = (M_1 M_2 \cdots M_n)^{-1}$ of the $n$-th level basic intervals in the Cantor construction shrinks to zero as $n \rightarrow \infty$, and since the points $x_i$ are distinct by definition, there exists a largest index $n \in \mathbb N$ such that $\{x_0, x_1, \cdots, x_v \} \subseteq I_\mathbf j \subseteq E_n$ for some $\mathbf j = \mathbf j_n \in \Sigma_n$. Suppose that $I_{\mathbf j} = \alpha_{\mathbf j} + [0,\delta_n]$.  
\vskip0.1in
\noindent The Cantor-like construction in Section \ref{Cantor-construction-section} dictates that at the $(n+1)^{\text{th}}$ step, $I_{\mathbf j}$ is decomposed into $M_{n+1}$ equal subintervals, of which only those corresponding to $U_{M_{n+1}} + \ell_{\mathbf j}$ are chosen. Thus $I_{\mathbf j} \cap E_{n+1}$ is a union of intervals of the form 
\[ \alpha_{\mathbf j} + \delta_n \Bigl[ \frac{j_{n+1}}{M_{n+1}}, \frac{j_{n+1}+1}{M_{n+1}}\Bigr], \quad \text{ with } j_{n+1} \in U_{M_{n+1}} + \ell_{\mathbf j} \text{ (mod  $M_{n+1}$)}.  \] 
This set is an affine copy of $\mathbb I(U_{M_{n+1}} + \ell_{\mathbf j})$ defined in \eqref{def-I0}. Since $\{x_0, x_1, \ldots, x_v\} \subseteq I_{\mathbf j} \cap E_{n+1}$, and the zero set of $f$ is closed under affine transformations, there exists an affine copy of $\{x_0, x_1, \ldots, x_v\}$, say $\{ x_0', x_1', \ldots, x_v'\}$, that is contained in $\mathbb I(U_{M_{n+1}} + \ell_{\mathbf j})$ and obeys \eqref{f-zero} with $\ell = \ell_{\mathbf j}$. It now follows from Lemma \ref{l-1}(ii)
that there exists $j_{n+1} \in U_{M_{n+1}}$ such that \[ \{x_0', x_1', \ldots, x_v' \} \subseteq \mathtt I_{M_{n+1}, j_{n+1}'} \quad \text{ where } \quad j_{n+1}' =  j_{n+1} + \ell_{\mathbf j} \text{ (mod $M_{n+1})$}. \] After an affine transformation, this is equivalent to saying that $\{x_0, x_1, \cdots, x_v \} \subset I_{\mathbf j_{n+1}}$ where $\mathbf j_{n+1} = (\mathbf j_n, j_{n+1}') \in \Sigma_{n+1}$.  But $I_{\mathbf j_{n+1}}$ is a basic interval of $E_{n+1}$. This contradicts the maximality of $n$, finishing the proof of Theorem \ref{t-1}.
\end{proof}


%
%

\subsection{Avoidance of nontrivial solutions with small diameter} 
\subsubsection{Proof of Theorem \ref{t-3}} 
\begin{proof}
We closely follow the proof strategy for Theorem \ref{t-1}. Our goal is to produce a probability measure $\mu$ that obeys \eqref{mu-Salem} for every $\eps > 0$ and whose support prevents nontrivial zeros of $\cf$ of arbitrarily small diameter.  
\vskip0.1in
\noindent For any $N\in\nn \setminus \{1, 2, \ldots, 5\}$, let us define
\[ \cq_N:=\Bigl\{f \in \cf: \exists v \geq 2 \text{ such that } f(x_0, \ldots, x_v):=m_0x_0-\sum_{i=1}^v m_ix_i, \; 
 m_0\le N/3 \Bigr\}. \]
Then $\cq_N$ is a finite sub-collection of $\cf$, for which the constant $A$ in \eqref{def-A} can be chosen to be $N$. Following Lemma \ref{l-1} with $\cg = \cq_N$, we set $M_N = 2^{C_0 N^4}$ and find a set $U_{M_N} = U_{M_N}(\cq_N) \subseteq [M_N]$ that obeys the conclusions of this lemma. 
In particular, part (iii) of this lemma ensures that $U_{M_N}$ does not contain any two consecutive integers modulo $M_N$. In addition, we have the following size condition from part (i):  
\[ L_{N} = \#(U_{M_N}) > M_N  \exp \bigl[-(3N + 4) \sqrt{\ln M_N} \bigr].\]
With this choice of $L_N$ and $M_N$, we ask the reader to verify that \eqref{log1} holds, and that $\sigma = 1 - 2 \eps$ verifies \eqref{log2} for every $\eps > 0$, since the limit inferior in \eqref{log2} is 1.  As in the proof of Theorem \ref{t-1}, we let $\{\ell_{\mathbf j} : \mathbf j \in \Sigma^{\ast} \}$ be a sequence of independent random variables, with $\ell_{\mathbf j}$ distributed uniformly in $[M_{N+1}]$ for $\mathbf j \in \Sigma_N$. We choose the random sets \begin{equation} \label{X-choices}  X_{\mathbf j} :=  U_{M_{N+1}} (\cq_{N+1}) + \ell_{\mathbf j}  \text{ mod } M_{N+1}  \text{ for } \mathbf j \in \Sigma_N. \end{equation} In other words, the discrete building blocks used at the $N^{\text{th}}$ step of the Cantor iteration are random translates of the set $U_{M_{N+1}}$, which avoids all zeros of $\mathcal Q_{N+1}$, except the constant vectors. It is easy to check that the sets $X_{\mathbf j}$ obey the criteria (I) and (II) on page \pageref{indep-criteria}. As a consequence, the random Cantor measure defined in \eqref{mu} obeys \eqref{mu-Salem}, by Theorem \ref{t-A}. Let $E$ denote the support of this measure. 
 \vskip0.1in
 \noindent It remains to prove that $E$ does not contain nontrivial solutions of $\cf$ with small diameter. Fix any $f \in \cf$. Then there exist an integer $v \geq 2$ and co-prime positive integers $m_0, m_1, \ldots, m_v$ such that $f(x_0, \cdots, x_v) = m_0 x_0 - (m_1 x_1 + m_2 x_2 + \cdots + m_v x_v)$. Since $\{\cq_N : N \geq 6\}$ is an increasing sub-collection that exhausts $\cf$, we can find $N \geq 6$ such that $f \in \cq_N \setminus \cq_{N-1}$. Set $\kappa = \kappa_N = \delta_N = (M_1 \cdots M_N)^{-1}$. Suppose now that $(x_0, \ldots, x_v) \in E^{v+1}$ obeys both the conditions in \eqref{small-diam}, with this choice of $\kappa$. We aim to show that 
\begin{equation} \label{diam-claim} \text{diam}(x_0, \ldots, x_v) \leq  \delta_{N+k} \text{ for every $k \geq 1$}. \end{equation}  
If we assume this for a moment, then letting $k \rightarrow \infty$ leads to diam$(x_0, x_1, \ldots, x_v) = 0$; in other words, $x_0 = x_1 = \cdots = x_v$, as required.
\vskip0.1in
\noindent Let us now turn to \eqref{diam-claim}, which we prove inductively in $k$. Since none of the sets $U_{M_n}$ contain consecutive integers mod $M_n$, the $n^{\text{th}}$ Cantor iterate $E_n$ does not contain adjacent basic intervals of length $\delta_n$, for any $n \geq 1$. Thus the assumption diam$(x_0, \ldots, x_v) < \kappa = \delta_N$ from \eqref{small-diam} means that there exists $\mathbf j \in \Sigma_N$ for which $\{x_0, \ldots, x_v\} \subseteq I_{\mathbf j} \subseteq E_N$. At step $N$ on the construction, the interval $I_{\mathbf j}$ is replaced by an affine copy of $\mathbb I(U_{M_{N+1}} + \ell)$ for some $\ell \in [M_{N+1}]$. Applying Lemma \ref{l-1}(ii) with $\cg = \cq_{N+1}$ and arguing as in the proof of Theorem \ref{t-1}, we deduce that there exists $ j_{N+1} \in [M_{N+1}]$ such that  $\{x_0, \ldots, x_v\} \subseteq I_{\mathbf j, j_{N+1}} \subseteq E_{N+1}$, and hence diam$(x_0, \ldots, x_v) \leq \delta_{N+1}$. 
\vskip0.1in
\noindent Let us proceed to the inductive step. At step $k$ of the induction (which corresponds to the $(N+k-1)^{\text{th}}$ stage of the Cantor iteration), we start with the inductive assumption that the points $x_0, \cdots, x_v$ all lie in a single basic interval $I_{\mathbf j_{N+k-1}} \subseteq E_{N+k-1}$ for some $\mathbf j_{N+k-1} \in \Sigma_{N+k-1}$. The choice of sets \eqref{X-choices} ensures that at step $(N+k-1)$ of the iteration, there exists an integer $\ell$ such that $\{x_0, \ldots, x_v \}$ is contained in an affine copy of $\mathbb I(U_{M_{N+k}} + \ell)$.  Applying Lemma \ref{l-1}(ii) with $\cg = \cq_{N+k}$, we conclude that the points $x_0, x_1, \ldots, x_v$ all lie in a single basic interval $I_{\mathbf j_{N+k}}$ of $E_{N+k}$, for some $\mathbf j_{N+k} \in \Sigma_{N+k}$. This completes the inductive step. 
 \end{proof}  

\section{Variations on a theme: more partial avoidance results}
In the remainder of this article, we turn our attention to Theorems \ref{t-4}, \ref{t-6} and \ref{t-5}, i.e., the construction of large sets that avoid nontrivial solutions of certain translation-invariant trivariate linear equations whose coefficients are not necessarily rational. The proofs of all these results follow the broad strokes of Theorem \ref{t-1}. Specifically, they rely on a two-part scheme that first constructs a partially avoiding set in the continuum which is then replicated at many scales to achieve full avoidance. The distinctions lie in the description of the building blocks with the partial avoidance feature. 

%
\subsection{Equations with coefficients lying in finitely many intervals} 
Given $M \in \mathbb N$, $Y \subseteq [M]$ and $\ell \in \mathbb Z$, let us recall from \eqref{def-I0} and \eqref{def-I} the definitions of $\mathtt I_{M,j}$ and $\mathbb I(Y + \ell) \subseteq \mathbb R/\mathbb Z$.   
\begin{lemma}\label{l-4}
Fix any $p\in\nn$ with $p\ge2$.
Then, for sufficiently large $M\in \mathbb N$ such that $\kz=\frac{p}{M} \leq \frac{1}{2p}$,
there exists
$V_M\st [M]$ such that
\begin{enumerate}
\item[{\rm (i)}] $\#V_M>M \exp \bigl[ -(5p+4)\sqrt{\ln M} \bigr]$, 

\item[{\rm (ii)}]
if $\{x,y,z\}\st \mathbb I(V_M+\ell)$ is a solution of 
$$tx+(1-t)y=z\quad\mbox{ for any }
\quad t\in\bigcup_{q=1}^{p-1}\lf(\frac qp-\kz,\frac qp+\kz\r)$$
 for some $\ell\in [M]$, 
then there exists $j\in V_M$ such that $\{x,y,z\}\st \mathtt I_{M,j+\ell}$.
\end{enumerate}
\end{lemma}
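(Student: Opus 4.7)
The strategy mirrors that of Lemma \ref{l-1}: I would first identify a finite family of integer equations capturing the rational centres $q/p$ of the forbidden intervals, apply Proposition \ref{p-1} to produce a Behrend-type avoiding set in the integers, then lift to the continuum by dilation, and finally translate the continuum perturbed equation back into the discrete integer equation, treating the additional $\eps$-contribution as a controlled error. Specifically, I would introduce
\[
\cg := \bigl\{\, p x_0 - q x_1 - (p-q) x_2 \; : \; q = 1, 2, \ldots, p-1 \,\bigr\} \subseteq \cf_2(\nn).
\]
Each member of $\cg$ has leading coefficient $m_0 = p$, so I would choose the constant $A$ in \eqref{def-A} to be a small integer multiple of $p$ strictly exceeding $2p$. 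For $\widetilde M := \lfloor M/(A+1)^2 \rfloor + 1$, Proposition \ref{p-1} then produces, for all sufficiently large $M$, a set $Y_{\widetilde M} \subseteq [\widetilde M]$ of cardinality at least $\widetilde M \exp[-(A+4)\sqrt{\ln \widetilde M}]$ that avoids every nontrivial integer zero of every $f \in \cg$. Setting $V_M := A \cdot Y_{\widetilde M}$, the cardinality bound (i) follows by repeating the derivation of \eqref{dan2}, with the exponent $5p+4$ reflecting the optimized choice of $A$ relative to $p$.

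For (ii), suppose $\{x, y, z\} \subseteq \mathbb I(V_M + \ell)$ satisfies $tx + (1-t)y = z$ with $t = q/p + \eps$ and $|\eps| < \kz = p/M$. Writing
\[
x = \frac{Aj_1 + \ell + \delta_x}{M} - \chi_1, \quad y = \frac{Aj_2 + \ell + \delta_y}{M} - \chi_2, \quad z = \frac{Aj_0 + \ell + \delta_z}{M} - \chi_0,
\]
with $j_0, j_1, j_2 \in Y_{\widetilde M}$, $\delta_\bullet \in [0,1)$ and $\chi_\bullet \in \{0,1\}$ as in Lemma \ref{l-1}, then multiplying $tx + (1-t)y - z = -\eps(x-y)$ through by $pM$ and collecting integer, modular, and fractional contributions yields
\[
A\bigl[qj_1 + (p-q)j_2 - pj_0\bigr] - M\bigl[q\chi_1 + (p-q)\chi_2 - p\chi_0\bigr] + \mathcal E = 0,
\]
where $\mathcal E := [q\delta_x + (p-q)\delta_y - p\delta_z] + p\eps M(x - y)$. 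The first bracket of $\mathcal E$ has magnitude at most $p$. For the second term, the key observation is that the modular combination $q\chi_1 + (p-q)\chi_2 - p\chi_0$ with $\chi_\bullet \in \{0,1\}$ and $q, p-q, p \geq 1$ vanishes only when $\chi_0 = \chi_1 = \chi_2$; in that case $M(x - y) = A(j_1 - j_2) + (\delta_x - \delta_y)$ obeys $|M(x-y)| \leq A\widetilde M + 1 \leq M/(A+1) + O(1)$, and consequently $|p\eps M(x-y)| \leq p^2/(A+1) + O(1)$. A two-case analysis, patterned after Lemma \ref{l-1}, then completes the proof. In the ``modular zero'' case the bound $|\mathcal E| < A$ forces the integer bracket $qj_1 + (p-q)j_2 - pj_0$ to vanish; Proposition \ref{p-1} applied to $\cg$ then yields $j_0 = j_1 = j_2 =: j$, so that $\{x,y,z\} \subseteq \mathtt I_{M, Aj+\ell}$ with $Aj \in V_M$, as required. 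In the complementary case the $M$-term has magnitude at least $M$, while the remaining contributions total at most $Ap\widetilde M + p + p^2 < M$ for $M$ sufficiently large, producing a contradiction that precludes this case.

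The main new ingredient, and the central obstacle beyond Lemma \ref{l-1}, is the $\eps$-perturbation term $p\eps M(x-y)$ in $\mathcal E$. A priori this term could be as large as $p^2$, which would force $A$ to grow quadratically in $p$ and destroy the desired linear-in-$p$ exponent in (i). The crux is that this term is only dangerous in the ``modular zero'' case, and precisely in that case the structural constraint $\chi_0 = \chi_1 = \chi_2$ pins $x$ and $y$ to the same modular fold, contracting $|M(x - y)|$ from $M$ down to $O(M/A)$ and thereby reducing the $\eps$-error to $O(p^2/A)$. Balancing this against the $\delta$-error of order $p$ yields the linear constraint $A \gtrsim p$, which is exactly what produces the exponent $(5p + 4)$ in (i). The hypothesis $\kz = p/M \leq 1/(2p)$ is the quantitative regime in which this balance can be sustained; once the discrete avoidance and error estimates are in place, the remaining steps are a direct adaptation of the proof of Lemma \ref{l-1}.
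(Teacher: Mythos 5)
Your proposal is correct and follows essentially the same strategy as the paper: take $\mathcal{G} = \{pz - qx - (p-q)y : 1 \le q \le p-1\}$, apply Proposition~\ref{p-1} with $A$ of order $p$ to get $Y_{\widetilde M}$, dilate to $V_M$, and in the continuum equation with $t = q/p + \eps$ separate the integer, modular ($\chi$), and error ($\delta$ plus $\eps$) contributions. You also correctly isolate the one nontrivial new point beyond Lemma~\ref{l-1}: the $\eps$-error $p\eps M(x-y)$ is only of size $O(p^2/A)$ rather than $O(p^2)$ because the modular-zero case forces $\chi_x = \chi_y$, contracting $|M(x-y)|$ from $O(M)$ to $O(M/A)$. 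There are two cosmetic differences from the paper: (1) the paper takes $A = 2p+1$, $\widetilde M = \lfloor M/(16p^2)\rfloor + 1$ and $V_M = 2p\,Y_{\widetilde M}$ (a dilation factor of $2p$ rather than $A$), and (2) having reduced to $\chi_x = \chi_y = \chi_z$, the paper multiplies through so that the remaining integer combination $2qj_x + 2(p-q)j_y - 2pj_z$ is manifestly an even integer, then shows the right-hand side has magnitude less than $2$, whereas you bound $|\mathcal E|$ below $A$ to force the integer bracket to vanish. Both versions close the argument in the same way — neither is gaining or losing anything substantive over the other.
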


\begin{proof}
Given $p \in \mathbb N \setminus \{1\}$, let us define 
\[ \cg := \Bigl\{f: \mathbb R^3 \rightarrow \mathbb R \Bigl| f(x, y, z) = pz - qx - (p-q) y, \; 1 \leq q \leq p-1, q \in \nn \Bigr\} \subseteq \cf, \] 
so that the choice $A = 2p+1$ obeys \eqref{def-A}. Set $\wz M:=\lfz\frac M{16p^2}\rfz+1$. 
Applying Proposition \ref{p-1} with this choice of $\cg$, $A$ and sufficiently large $M$, we obtain a set 
\begin{equation}  Y_{\wz M}\st [\wz M] \quad \text{ such that } \quad \#Y_{\wz M}>\wz M \exp\Bigl[ -(2p+5)\sqrt{\ln \wz M} \Bigr], \label{Y-size}  \end{equation}  
with the property that whenever $\{x, y, z\} \subseteq Y_{\wz M}$ obeys the equation
\begin{equation} \label{Y-avoid} 
tx+(1-t)y=z \mbox{ for some } t\in \Bigl\{\frac 1p,\cdots, \frac{p-1}p\Bigr\}, \text{ one must have } x = y = z.  
\end{equation}
Set $V_M:=2p Y_{\wz M} =\{2px:\ x\in Y_{\wz M}\} \subseteq [M]$.
Then it follows from \eqref{Y-size} that 
\begin{align*} 
\#V_M=\# Y_{\wz M} &> \wz M \exp \bigl[ -(2p+5)\sqrt{\ln \wz M} \bigr] \\ 
& >M \exp \bigl[ -\ln (16p^2)-(2p+5)\sqrt{\ln M} \bigr] \\ &>M \exp \bigl[ -(5p+4)\sqrt{\ln M} \bigr], \quad {\text{provided $M$ is large enough.}} 
\end{align*} 
This verifies part (i) of the lemma. 
\vskip0.1in
\noindent For part (ii), suppose that $\{x,y,z\}$ is a triple of distinct points in $\mathbb I(V_M+\ell)$ that solves the equation 
\begin{equation} \label{eqn-e} 
\Bigl(\frac qp+\eps \Bigr)x+\Bigl(\frac {p-q}p-\eps\Bigr)y=z
\end{equation} 
for some $\ell\in [M]$, $q\in\{1,\ldots,p-1\}$ and $\eps \in(-\kz,\kz)$.  This means that there exist integers 
$\{j_x,j_y,j_z\}\st Y_{\widetilde{M}}$ and $\dz_x,\dz_y,\dz_z\in[0,1)$ such that
\begin{equation}
\left\{\displaystyle
\begin{array}{l@{\qquad}l}
\displaystyle x=\frac{2pj_x+\ell+\dz_x}M,& \mbox{ if }\ 2pj_x+\ell<M,\\
\displaystyle x=\frac{2pj_x+\ell+\dz_x}M-1,& \mbox{ if }\ 2pj_x+\ell\ge M.
\end{array}
\right. \label{rep-x} 
\end{equation}
We set 
\[ \chi_x= \begin{cases} 0  &\text{ if $2pj_x+\ell< M$},  \\ 1 &\text{ if $2pj_x+\ell\ge M$.} \end{cases} \]
The integers $j_y, j_z \in Y_{\widetilde{M}}$ and the binary counters $\chi_y, \chi_z \in \{0,1\}$ are defined similarly for $y$ and $z$. Substituting \eqref{rep-x} into \eqref{eqn-e}  and simplifying,   we obtain 
\begin{equation} \label{pqm}
\begin{aligned}
\Bigl(\frac qp+\eps \Bigr)(2pj_x+\dz_x)+ \Bigl(\frac {p-q}p-\eps \Bigr)&(2pj_y+\dz_y)-(2pj_z+\dz_z) \\ &=\Bigl[\Bigl(\frac qp+\eps \Bigr)\chi_x+ \Bigl(\frac {p-q}p-\eps\Bigr)\chi_y-\chi_z\Bigr]M.
\end{aligned}
\end{equation}
We claim that if \eqref{pqm} holds, then we must have $\chi_x = \chi_y = \chi_z$. Indeed, if this were not the case, then the right hand side of \eqref{pqm} would be nonzero, with
\begin{equation}  \Bigl |\Bigl(\frac qp+\eps \Bigr)\chi_x+\Bigl(\frac {p-q}p-\eps\Bigr)\chi_y-\chi_z\Bigr|M>\frac Mp - M \kappa. \label{rhs-lower} \end{equation} 
On the other hand, for any $j \in Y_{\widetilde{M}}$ and any $\delta \in [0,1)$, we have that $2pj + \delta \leq 2p (\widetilde{M}-1) + 1 < 2p \widetilde{M}$, hence the left hand side of \eqref{pqm} is bounded above by 
\begin{align} 
\Bigl|\Bigl(\frac qp+\eps \Bigr)(2pj_x+\dz_x) &+\Bigl(\frac {p-q}p-\eps \Bigr)(2pj_y+\dz_y)-(2pj_z+\dz_z)\Bigr| \notag \\ 
&< \Bigl(\frac qp+\eps \Bigr) 2p \widetilde{M}  +  \Bigl(\frac {p-q}p-\eps \Bigr) 2p \widetilde{M} + 2p \widetilde{M}   \leq 4p \wz M <\frac M{2p}. \label{lhs-upper} 
\end{align} 
The estimates \eqref{rhs-lower} and \eqref{lhs-upper} substituted into \eqref{pqm} provide the desired contradiction, since $M$ has been chosen large enough to ensure $\kappa \leq \frac{1}{2p}$. In view of $\chi_x=\chi_y=\chi_z$, \eqref{pqm} becomes
\begin{align}
(2pj_z &+ \delta_z) = \Bigl(\frac{q}{p} + \eps \Bigr) (2pj_x + \delta_x) +  \Bigl(\frac{p-q}{p} - \eps \Bigr) (2pj_y + \delta_y); \; \text{rearranging this, we get} \notag \\   
\label{pqm1}
&2qj_x+2(p-q)j_y-2pj_z=\eps (2pj_y+\dz_y-2pj_x-\dz_x)+\dz_z- \frac qp \dz_x- \frac {p-q}p \dz_y.
\end{align}
The left hand side of \eqref{pqm1} is either zero or an even integer, whereas the right hand can be bounded as follows, 
\begin{align*} 
\Bigl|\eps(2pj_y+\dz_y-2pj_x-\dz_x) &+\dz_z- \frac qp \dz_x- \frac {p-q}p \dz_y\Bigr| \\  &\leq \Bigl|\eps(2pj_y+\dz_y-2pj_x-\dz_x)+\dz_z \Bigr| + \Bigl| \dz_z- \frac qp \dz_x- \frac {p-q}p \dz_y\Bigr| \\ & \leq 2\kappa p \widetilde{M} + 1 < 2, \text{ given our choice of $\kappa = \frac{p}{M}$ and $\widetilde{M}$.}  
\end{align*} 
Thus both sides of \eqref{pqm1} must vanish, i.e., $qj_x+(p-q)j_y-pj_z=0$.
Since $\{j_x,j_y,j_z\}\st Y_{\wz M}$ and obey $q x+ (p-q) y= pz$, it follows from \eqref{Y-avoid} that 
$j_x=j_y=j_z=j_0$.
This implies that  $j = 2p j_0 \in V_M$ and hence $\{x, y, z\}\st \mathtt I_{M, j + \ell}$,
which finishes the proof of Lemma \ref{l-4}.
\end{proof}

\subsection{Proof of Theorem \ref{t-4}} 
\begin{proof}
Let us fix an integer $p \geq 2$ and $\alpha \in (0,1)$. Using the prescription of Section \ref{Cantor-construction-section}, we aim to find a probability measure $\mu$
such that $E = \supp(\mu)$ contains no nontrivial solution of 
\begin{align} tx+(1-t)y=z \mbox{ for any } t\in\bigcup_{q=1}^{p-1}\Bigl(\frac qp-\kz,\frac qp+\kz\Bigr), \;\text{ and } \label{eqn-avoid}  \\
|\wh\mu(k)|\leq C_{\eps} (1 + |k|)^{-\az/2 + \eps}\mbox{ for all }k\in \zz \text{ and every } \eps > 0. \label{mu-F-decay} \end{align} 
We first choose $M\in\nn$ large enough so that \[ M \exp \bigl[ -(5p+4)\sqrt{\ln M} \bigr] >M^\az, 
\quad \text{ namely, }  \quad M>\exp \Bigl(\Bigl[\frac{5p+4}{1-\az}\Bigr]^2\Bigr). \]
Let $V_M\st [M]$ be as in Lemma \ref{l-4}
and $\kz(\az,p):=\frac{p}{M}$ as specified by the lemma. We then deduce from this lemma that
$\frac{\ln(\#V_M)}{\ln M}>\az$. For any $n\in\nn$, we set \[ M_n = M, \; L_n:=\#V_M, \; \text{ so that } \; \lim_{n\to\fz}\frac{\ln M_{n+1}}{\ln(M_1\cdots M_n)}=0
\text{ and } 
\lim_{n\to\fz}\frac{\ln (L_1\cdots L_n)}{\ln(M_1\cdots M_n)} \geq \az. \] 
Let $\{\ell_{\mathbf j}:\ \mathbf j \in\Sigma^*\}$ be a sequence of independent random variables, with $\ell_{\mathbf j}$ distributed 
uniformly in $[M_{n+1}]$ for $\mathbf j\in\Sigma_n$.
Set
$$X_{\mathbf j} := V_M +\ell_{\mathbf j} \mod M, \ \mbox{ for any } \mathbf j\in\Sigma_n.$$
Then the sets $X_{\mathbf j}$ are independent and satisfy the criteria (I) and (II) in page \pageref{indep-criteria}.
By Theorem \ref{t-1}, the measure $\mu$ defined in Section \ref{Cantor-construction-section}  satisfies \eqref{mu-F-decay}. 
\vskip0.1in
\noindent It remains to show that $E = \supp(\mu)$ contains no nontrivial solution of any of the equations in \eqref{eqn-avoid}. This is a repetition of similar arguments from Theorems \ref{t-1} and \ref{t-3}.   
In fact, if the triple $\{x,y,z\}\subseteq E$  constitutes a nontrivial solution of
some equation in \eqref{eqn-avoid} and $I_{\mathbf j}$ is the smallest interval in the construction containing all the three points $x,y,z$, it would follow from Lemma \ref{l-4}
that $\{x,y,z\}\in I_{\mathbf j, j_{n+1}}$ for some $(\mathbf j, j_{n+1}) \in \Sigma_{n+1}$, contradicting minimality. This finishes the proof of Theorem \ref{t-4}.
\end{proof}
\section{An uncountable collection of forbidden coefficients} 
%
\subsection{A general construction} \label{coeff-construction-section} 
In preparation for Theorem \ref{t-5}, we specify in this a section a general prescription for creating an uncountable set $\mathfrak D$. In the next two sections, we will use this construction to describe the set of forbidden coefficients $\mathfrak C$ and prove Theorem \ref{t-5}. 
\vskip0.1in  
\noindent Given integers $1 \leq P \leq R+2$, let $\mathscr{Q}_P$ denote the set of dyadic rationals of the form $\mathbb Z 2^{-P}$, and 
\begin{equation} \label{APR} \mathfrak A(P, R) := \bigcup_{q \in \mathbb Z} \Bigl[\frac{q}{2^P} - 2^{-R}, \frac{q}{2^P} + 2^{-R} \Bigr] = \Bigl\{x : \text{dist}(x, \mathscr{Q}_P)  \leq 2^{-R} \Bigr\}. \end{equation}   
Given an infinite sequence of positive integers $1 = P_1 < R_1 < P_2 < R_2 < \cdots < P_n < R_n < \cdots$, where all successive differences are at least 2, we set
\begin{equation} \label{D-def} 
\begin{aligned} \mathfrak D_1 &:= \Bigl[\frac{1}{2} - 2^{-R_1},  \frac{1}{2} + 2^{-R_1} \Bigr], \quad \mathfrak D_2 := \mathfrak D_1 \cap \mathfrak A(P_2, R_2), \\ \mathfrak D_n &:= \mathfrak D_{n-1} \cap \mathfrak A(P_n, R_n), \quad \text{ and } \quad \mathfrak D := \bigcap_{n=1}^{\infty} \mathfrak D_n.  \end{aligned}  
\end{equation} 
We observe that each $\mathfrak D_n$ is a finite union of closed intervals of length $2^{-R_n +1}$. As a result, $\mathfrak D$ is closed. 
\begin{lemma} \label{D-infinite-lemma} 
The set $\mathfrak D$ contains infinitely many rationals and uncountably many irrationals. 
\end{lemma}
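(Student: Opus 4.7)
The plan is to exploit the dyadic self-similarity of the nested sets $\mathfrak{D}_n$, which simultaneously preserves dyadic rationals and supports Cantor-type branching. I would extract the two conclusions separately.

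First, I would observe that any dyadic rational $q/2^{P_n}$ lying in $\mathfrak{D}_{n-1}$ automatically belongs to the full intersection $\mathfrak{D}$. Indeed, for each $m \geq n$, the identity $q/2^{P_n} = (2^{P_m - P_n} q)/2^{P_m}$ shows $q/2^{P_n} \in \mathscr{Q}_{P_m}$, so $\dist(q/2^{P_n}, \mathscr{Q}_{P_m}) = 0$ and $q/2^{P_n} \in \mathfrak{A}(P_m, R_m)$. A short induction on $m$ places $q/2^{P_n}$ in every $\mathfrak{D}_m$. To produce infinitely many \emph{distinct} such rationals I would restrict to odd $q$, which forces the fraction to be in lowest terms with denominator exactly $2^{P_n}$, and therefore distinct across different levels $n$. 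Each component interval of $\mathfrak{D}_{n-1}$ has length $2^{1-R_{n-1}}$, while dyadic rationals $q/2^{P_n}$ with $q$ odd are spaced $2^{1-P_n}$ apart, so the hypothesis $P_n - R_{n-1} \geq 2$ guarantees at least three such rationals in each component. Selecting one per level yields infinitely many distinct rationals in $\mathfrak{D}$.

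Next, for uncountability I would verify a Cantor-style branching property, namely that each component of $\mathfrak{D}_{n-1}$ meets $\mathfrak{A}(P_n, R_n)$ in at least two disjoint closed subintervals. Using $R_n - P_n \geq 2$, the components of $\mathfrak{A}(P_n, R_n)$ have length $2^{1-R_n} \leq 2^{-P_n - 1}$ and consecutive components are separated by gaps of at least $2^{-P_n} - 2^{1-R_n} \geq 2^{-P_n - 1}$; while $P_n - R_{n-1} \geq 2$ forces each component of $\mathfrak{D}_{n-1}$ to have length $\geq 2^{3 - P_n}$, comfortably accommodating at least two such disjoint subintervals. Iterating this split, every binary sequence $\sigma = (\sigma_n) \in \{0,1\}^{\mathbb{N}}$ selects a nested chain $J_1(\sigma) \supset J_2(\sigma) \supset \cdots$ with each $J_n(\sigma)$ a component of $\mathfrak{D}_n$; compactness yields a point of $\mathfrak{D}$ in $\bigcap_n J_n(\sigma)$, and distinct sequences eventually fall into disjoint components, producing distinct limit points. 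This embeds $\{0,1\}^{\mathbb{N}}$ into $\mathfrak{D}$, so $\mathfrak{D}$ is uncountable; since $\mathbb{Q}$ is countable, uncountably many points of $\mathfrak{D}$ must be irrational.

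The main obstacle, though essentially bookkeeping, is verifying that the two gap hypotheses $P_n - R_{n-1} \geq 2$ and $R_n - P_n \geq 2$ are tight enough to guarantee both the interior placement of the odd-$q$ dyadic rationals and the existence of two well-separated Cantor subintervals at each stage. Once these quantitative checks are in place, both halves of the lemma follow immediately.
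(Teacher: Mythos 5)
Your proposal is correct, and it reaches both conclusions by a genuinely different route than the paper. The paper's proof is fully explicit: it exhibits the points $t(\pmb{\eps})=\tfrac12+\sum_{n}\eps_n2^{-R_n-2}$ indexed by binary sequences $\pmb{\eps}$, verifies by induction that each partial sum $t_k$ lies in $\mathfrak D_k\cap\mathscr Q_{P_{k+1}}$ and that the tail stays within $2^{-R_{k+1}}$ of it, and then reads off uncountability from the injectivity of $\pmb\eps\mapsto t(\pmb\eps)$ and the rationals from the eventually-zero sequences. You instead argue structurally: (a) any dyadic rational $q/2^{P_n}$ already sitting in $\mathfrak D_{n-1}$ has distance zero to every later lattice $\mathscr Q_{P_m}$ and hence survives into $\mathfrak D$, and the spacing condition $P_n-R_{n-1}\ge 2$ guarantees such points (with odd $q$, hence distinct denominators across levels) exist at every stage; (b) uncountability comes from a Cantor branching-plus-compactness argument using $R_n-P_n\ge2$, with irrationality of all but countably many points following by cardinality. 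Your quantitative checks are right: an interval of length $2^{1-R_{n-1}}\ge 2^{3-P_n}$ does contain at least two full, mutually separated components of $\mathfrak A(P_n,R_n)$ and at least three odd-numerator points of $\mathscr Q_{P_n}$. The trade-off is that the paper's construction hands you the points of $\mathfrak D$ in closed form (a self-contained parametrization by $\{0,1\}^{\nn}$ that simultaneously yields both cardinality claims), whereas your argument is softer but cleanly decouples the two conclusions and is arguably more robust to perturbations of the construction. One cosmetic caveat, which affects the paper's own description equally: components of $\mathfrak A(P_n,R_n)$ centred at the endpoints of a component of $\mathfrak D_{n-1}$ get truncated, so some components of $\mathfrak D_n$ have length $2^{-R_n}$ rather than $2^{1-R_n}$; your lower bounds still leave enough room (a half-length component of $\mathfrak D_{n-1}$ has length $\ge 2^{2-P_n}$, which still accommodates two branches and one odd dyadic point), so nothing breaks.
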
 
\begin{proof} 
Let $\pmb{\eps} = (\eps_1, \eps_2, \ldots)$ be an infinite binary sequence, with entries in $\{0,1\}$. We claim that every point of the form 
\begin{equation}  t(\pmb{\eps}) := \frac{1}{2} + \sum_{n=1}^{\infty} \frac{\eps_n}{2^{R_n+2}} \text{ with }  \pmb{\eps} \in \prod_{n=1}^{\infty} \{0,1\} \label{t-induction} 
\text{ lies in $\mathfrak D$. } 
\end{equation} 
The conclusion of the lemma is an easy consequence of this claim. The collection of binary sequences is uncountable, each sequence yields a unique $t(\pmb{\eps})$ due to the separation condition $R_{n+1} - R_n \geq 4$, and the sequences $\pmb{\eps}$ that are eventually zero yield infinitely many distinct (dyadic) rationals. 
\vskip0.1in
\noindent It remains to prove \eqref{t-induction}, which is equivalent to showing that $t = t(\pmb{\eps}) \in \mathfrak D_k$ for all $k \geq 1$. 
This is a consequence of the following statements, which we will prove momentarily using induction: for every $k \geq 1$,  
\begin{align} \label{where-is-t}  &t_k = t_k(\pmb{\eps}) := \frac{1}{2} + \sum_{n=1}^{k} \frac{\eps_n}{2^{R_n+2}} \in \mathfrak D_k \cap \mathscr{Q}_{P_{k+1}} \text{ and } \\  &t_{\ell} \in \mathfrak D_{k+1} \text{ for all }  \ell \geq k+1, \text{ hence $t \in \mathfrak D_{k+1}$.} \label{where-is-t-2}\end{align}  
Let us start with the base case $k = 1$, with  $t_1 = \frac{1}{2} + \frac{\eps_1}{2^{R_1+2}}$. For all $\ell \geq 1$, we have that  
\begin{equation} \label{tl-in-D1}
 \bigl| t_{\ell} - \frac{1}{2} \bigr| \leq \sum_{j=R_1+2}^{\infty} 2^{-j} \leq 2^{-R_1-1}; \; \text{ hence } t_{\ell} \in \mathfrak D_1 \text{ for all $\ell \geq 1$}. \end{equation}  
Further, $t_1$ can be written as 
\begin{align*} 
 &t_1 = \frac{2^{P_2-R_1-2}(\eps_1 + 2^{R_1+1})}{2^{P_2}}, \quad \text{ which shows that } t_1 \in \mathscr{Q}_{P_2}, \text{ and } \\ &\bigl| t_{\ell} - t_1\bigr| \leq \sum_{j=2}^{\infty} 2^{-R_2-j} \leq 2^{-R_2-1} < 2^{-R_2} \text{ for } \ell \geq 2, \end{align*} 
 hence $t_{\ell} \in \mathfrak A(P_2, R_2)$ for $\ell \geq 2$, according to the definition \eqref{APR}. Combining the last observation with \eqref{tl-in-D1}, we see that  each such $t_{\ell}$ lies in $\mathfrak D_1 \cap \mathfrak A(P_2, R_2) = \mathfrak D_2$. Since $t_{\ell} \rightarrow t$ and $\mathfrak D_2$ is closed, we conclude that $t \in \mathfrak D_2$, which completes the base case of the induction.
 \vskip0.1in
 \noindent For the inductive step, we assume that $t_{k-1} \in \mathfrak D_{k-1} \cap \mathscr{Q}_{P_{k}}$ and that $t, t_{\ell} \in \mathfrak D_k$ for all $\ell \geq k$. Since the class $\mathscr{Q}_P$ of dyadic rationals is closed under addition with $ \mathscr{Q}_{P_{k}} \subseteq  \mathscr{Q}_{P_{k+1}}$ and \[ t_k = t_{k-1} + \eps_k 2^{-R_k-2} = t_{k-1} + \frac{\eps_k 2^{P_{k+1} - R_k-2}}{2^{P_{k+1}}}, \] it is easy to see that $t_{k} \in \mathscr{Q}_{P_{k+1}}$. Since $t_k \in \mathfrak D_k$ by the induction hypothesis, \eqref{where-is-t} follows. 
 For $\ell \geq k+1$, we note that $|t_{\ell} - t_k| \leq \sum_{j=2}^{\infty} 2^{-R_{k+1}-j} \leq 2^{-R_{k+1}-1} < 2^{-R_{k+1}}$.  The earlier observation that $t_k \in \mathscr{Q}_{P_{k+1}}$ and the induction hypothesis $t_{\ell} \in \mathfrak D_k$ then imply that $t_{\ell} \in \mathfrak A(P_{k+1}, R_{k+1}) \cap \mathfrak D_k = \mathfrak D_{k+1}$ for all $\ell \geq k+1$. Letting $\ell \rightarrow \infty$ and using the fact that $\mathfrak D_{k+1}$ is closed, we obtain $t \in \mathfrak D_{k+1}$. This establishes \eqref{where-is-t-2} and completes the induction.    
\end{proof} 
\subsection{Construction of $\mathfrak C$} 
For $p \in \mathbb N$ and $\kappa > 0$, we will use the notation
\begin{equation} \mathscr{A}(p, \kappa) := \bigcup_{q=1}^{p-1} \Bigl( \frac{q}{p} - \kappa, \frac{q}{p} + \kappa \Bigr)  \label{union-pk} \end{equation}  
to denote the disjoint union of intervals appearing in Lemma \ref{l-4}. This lemma will be employed repeatedly to construct a set $\mathfrak C$ as in Section \ref{coeff-construction-section}. 
\vskip0.1in
\noindent For $n\in\nn$, let 
\begin{equation} \label{what -is-alphan}
\az_n:=\frac n{n+1}, \text{ so }  \alpha_n \nearrow 1. 
\end{equation} 
\begin{proposition} \label{C-construction-prop} 
There exist an infinite set \begin{equation}  \mathfrak C = \bigcap_{n=1}^{\infty} \mathfrak C_n  \quad \text{ of the form \eqref{D-def},} \label{C-def} \end{equation} 
increasing sequences of positive integers $\{R_n: n\geq 1\}$, $\{ N_n: n \geq 1\}$ and sets $V_{N_n} \subseteq [N_n]$ such that 
\begin{align}   \# V_{N_n} &>N_n\exp \bigl[-(5 \cdot 2^{R_{n-1}+2}+4)\sqrt{\ln N_n} \bigr]>N_n^{\az_n}, \label{V-large} \\ \mathfrak C_n &= \mathfrak C_{n-1} \cap \mathfrak A(R_{n-1}+2, R_n). \label{def-Cn} 
\end{align} 
 In addition, for every $\ell \in [N_n]$, the set $\mathbb I(V_{N_n} + \ell)$ partially avoids the coefficient set $\mathfrak C_n$ in the following sense. If a triple $\{x, y, z\} \subset \mathbb I(V_{N_n} + \ell)$ solves the equation \begin{equation}  tx + (1-t)y = z  \quad \text{ for any } t \in \mathfrak C_n  \label{xyz-solve} \end{equation}  then there exists $j \in V_{N_n}$ such that 
 \begin{equation} \label{part-avoid}  \{x, y, z\} \subset \mathtt I_{N_n, j+\ell}, \text{ a basic interval of $\mathbb I(V_{N_n} + \ell)$.} \end{equation}    
In particular, $\mathbb I(V_{N_n} + \ell)$ avoids the coefficient set $\mathfrak C$ defined in \eqref{C-def}, for every $n \geq 1$. 
\end{proposition}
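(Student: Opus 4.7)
The plan is to construct the data $(R_n, N_n, V_{N_n}, \mathfrak C_n)$ inductively in $n$, each step consisting of a single application of Lemma \ref{l-4}. Using the convention $R_0 := -1$, set $p_n := 2^{R_{n-1}+2}$, so $p_1 = 2$. At step $n$, given $R_{n-1}$ (hence $p_n$), first pick $N_n \in \nn$ with $N_n > N_{n-1}$ large enough that both
\[ \kz_n := \frac{p_n}{N_n} \leq \frac{1}{2p_n}  \quad \text{and} \quad \ln N_n > \Bigl[(n+1)(5p_n+4) \Bigr]^2. \]
The first inequality matches the hypothesis of Lemma \ref{l-4}; the second ensures that the lower bound $\#V_{N_n} > N_n \exp[-(5p_n+4)\sqrt{\ln N_n}]$ supplied by Lemma \ref{l-4}(i) also exceeds $N_n^{\az_n}$, since $1 - \az_n = 1/(n+1)$. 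Lemma \ref{l-4} then produces $V_{N_n} \subseteq [N_n]$ satisfying \eqref{V-large}, together with the partial avoidance property: for every $\ell \in [N_n]$, any triple $\{x,y,z\} \subseteq \mathbb I(V_{N_n}+\ell)$ solving $tx+(1-t)y=z$ with $t \in \mathscr{A}(p_n, \kz_n)$ is confined to a single basic interval $\mathtt I_{N_n, j+\ell}$ of $\mathbb I(V_{N_n}+\ell)$ for some $j \in V_{N_n}$.

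Next, I would choose $R_n$ large enough so that both $R_n \geq R_{n-1}+4$ (so that the gaps $R_n - P_n \geq 2$ and $P_{n+1} - R_n \geq 2$ required by Lemma \ref{D-infinite-lemma} hold, with $P_n = R_{n-1}+2$) and $2^{-R_n} \leq \kz_n$. For $n \geq 2$, set $\mathfrak C_n := \mathfrak C_{n-1} \cap \mathfrak A(R_{n-1}+2, R_n)$ in accordance with \eqref{def-Cn}; for $n = 1$, take $\mathfrak C_1 := [1/2 - 2^{-R_1}, 1/2 + 2^{-R_1}]$, matching the form of $\mathfrak D_1$ in \eqref{D-def}. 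The key containment $\mathfrak C_n \subseteq \mathscr{A}(p_n, \kz_n)$ is verified as follows: fixing $R_1 \geq 3$ places $\mathfrak C_n \subseteq \mathfrak C_1 \subseteq (0,1)$, so the only dyadic rationals $q/p_n$ whose $2^{-R_n}$-neighborhoods can intersect $\mathfrak C_n$ correspond to $q \in \{1, \ldots, p_n-1\}$; since $2^{-R_n} \leq \kz_n$, each such neighborhood is contained in the corresponding interval of $\mathscr{A}(p_n, \kz_n)$. Lemma \ref{l-4}(ii) applied to $t \in \mathfrak C_n$ then delivers the partial avoidance \eqref{xyz-solve}--\eqref{part-avoid}.

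Finally, setting $\mathfrak C := \bigcap_n \mathfrak C_n$, the inductive construction places $\mathfrak C$ in the form \eqref{D-def} with $P_n = R_{n-1}+2$, so Lemma \ref{D-infinite-lemma} yields that $\mathfrak C$ contains infinitely many rationals and uncountably many irrationals. The inclusion $\mathfrak C \subseteq \mathfrak C_n$ then propagates the partial avoidance of $\mathfrak C_n$ by $\mathbb I(V_{N_n}+\ell)$ to $\mathfrak C$ itself, yielding the last sentence of the proposition. The main technical point demanding care is the simultaneous bookkeeping of the Lemma \ref{l-4} hypothesis $\kz_n \leq 1/(2p_n)$, the size condition \eqref{V-large}, the containment $\mathfrak C_n \subseteq \mathscr{A}(p_n, \kz_n)$, and the Lemma \ref{D-infinite-lemma} spacing $R_n \geq R_{n-1}+4$; because $N_n$ and then $R_n$ may each be chosen arbitrarily large in sequence, no tension arises among these constraints and the induction closes without difficulty.
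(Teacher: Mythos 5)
Your proposal is correct and follows essentially the same inductive strategy as the paper's own proof: initialize $R_0 = -1$, $\mathfrak C_0 = [0,1]$, apply Lemma \ref{l-4} with $p_n = 2^{R_{n-1}+2}$ to produce $N_n$, $\kappa_n$, and $V_{N_n}$, then choose $R_n$ so that $2^{-R_n} < \kappa_n$ and define $\mathfrak C_n$ as a dyadic thickening intersected with $\mathfrak C_{n-1}$. You are somewhat more careful than the paper in three places, all of which are genuine (if minor) improvements: you make the choice of $N_n$ explicit via $\ln N_n > [(n+1)(5p_n+4)]^2$ so that \eqref{V-large} is transparent; you explicitly impose $R_n \geq R_{n-1}+4$, which the paper's proof omits but which is needed for the spacing hypotheses of Lemma \ref{D-infinite-lemma} (with $P_n = R_{n-1}+2$); and you spell out why $\mathfrak C_n \subseteq \mathscr{A}(p_n,\kappa_n)$, which the paper asserts without elaboration. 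One small nitpick: in one sentence you require $2^{-R_n} \leq \kappa_n$, but since $\mathfrak A(P_n, R_n)$ uses closed intervals and $\mathscr{A}(p_n,\kappa_n)$ uses open ones, the containment needs strict inequality $2^{-R_n} < \kappa_n$; this is harmless because $R_n$ can be taken as large as desired.
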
 
\begin{proof} We will use induction on $n$ to create $R_n$, $N_n$, $V_{N_n}$ and $\mathfrak C_n$ that obey \eqref{V-large}, \eqref{def-Cn}, and the partial avoidance criterion \eqref{xyz-solve} $\implies$ \eqref{part-avoid}. The set $\mathfrak C$ is then obtained using the defining formula \eqref{C-def}. This is consistent with the prescription in Section \ref{coeff-construction-section}, with $P_1=1$ and $P_n = R_{n-1}+2$ for $n \geq 2$. 
\vskip0.1in
\noindent Let us initialize $R_0= -1$ and $\mathfrak C_0 = [0,1]$. Applying Lemma \ref{l-4} with $p_1 = 2^{P_1} = 2^{R_0+2}=2$, we can find a large integer $N_1$,
a small positive constant $\kappa_1$ and a large set $V_{N_1}\subseteq [N_1]$ that partially avoids $\mathscr{A}(p_1, \kappa_1)$. More precisely, we can choose $N_1$ so that \eqref{V-large} holds, 
and if $\{x,y,z\}\st \mathbb I(V_{N_1}+\ell)$ is a solution of 
$$tx+(1-t)y=z\quad\mbox{ for some } t\in \mathscr{A}(p_1, \kappa_1) = \Bigl(\frac{1}{2} - \kappa_1, \frac{1}{2} + \kappa_1 \Bigr)$$
and for some $\ell\in [N_1]$, 
then Lemma \ref{l-4} (ii) dictates the existence of $j\in V_{N_1}$ such that $\{x,y,z\}\st \mathtt I_{N_1,j+\ell} \subseteq \mathbb I(V_{N_1}+\ell)$.  Set $R_1$ to be an integer such that $\varrho_1 := 2^{-R_1} < \kappa_1$ and define 
\[ \mathfrak C_1 := \Bigl [\frac 12-\varrho_1,\frac 12+\varrho_1\Bigr]. \]
This verifies \eqref{def-Cn} for $n=1$. Since $\mathfrak C_1 \subseteq \mathscr{A}(p_1, \kappa_1)$, the implication \eqref{xyz-solve} $\implies$ \eqref{part-avoid} follows from the avoiding property of $V_{N_1}$, concluding the base case of the induction. 
\vskip0.1in 
\noindent We arrive at the inductive step. Let us assume now that the conditions in the proposition have been verified for all $n \leq k-1$. At the $k^{\text{th}}$ step, we set $p_k = 2^{R_{k-1}+2}$. Invoking Lemma \ref{l-4} yields a large integer $N_k$, a small constant $\kappa_k > 0$ and a large set $V_{N_k} \subseteq [N_k]$ obeying the conclusions of the lemma; in particular, we can choose $N_k$ large enough to ensure \eqref{V-large} with $n=k$. 
Further, if $\{x,y,z\}\st \mathbb I(V_{N_k}+\ell)$ is a solution of 
$$tx+(1-t)y=z\quad \text{ for some } t\in  \mathscr{A}(p_k, \kappa_k) \text{ and $\ell\in [N_k] $} $$
 then there exists $j\in V_{N_k}$ such that $\{x,y,z\}\st \mathtt I_{N_k,j+\ell}$. Choosing $R_k \in \nn$ to satisfy $2^{-R_k} < \kappa_k$, and setting $\mathfrak C_k = \mathfrak C_{k-1} \cap \mathfrak{A}(R_{k-1}+2, R_k) \subseteq  \mathscr{A}(p_k, \kappa_k)$ verifies all the conditions of the proposition for $n=k$, completing the induction.  
\end{proof} 

\subsection{Proof of Theorem \ref{t-5}} 
\begin{proof} 
Let $\mathfrak C$ denote the set specified in Proposition \ref{C-construction-prop}. Since it is of the form \eqref{D-def} constructed in Section \ref{coeff-construction-section}, it follows from Lemma \ref{D-infinite-lemma} that $\mathfrak C$ contains infinitely many rationals and uncountably many irrationals. We now aim to show that there exists a probability measure $\mu$ on $[0,1]$ such that $E = \supp \mu$ contains no nontrivial solution of $t x+(1-t)y=z$ for any $t \in \mathfrak C$, and that for any $\eps>0$, there is a constant $C_{\eps} > 0$ such that 
\begin{equation} |\wh\mu(k)|\leq C_{\eps} (1 + |k|) ^{-1/2+\eps}\mbox{ for all }k\in\zz. \label{F-decay-again} \end{equation} 
\vskip0.1in
\noindent To construct $\mu$, we follow the same general pattern as the random construction in Section \ref{Cantor-construction-section}, the only distinction being that the parameters $M_n$ in that construction have to be chosen depending on the parameters $N_n$ in Proposition \ref{C-construction-prop} so as to obey \eqref{log1}. Given a fixed large absolute constant $C_0 \geq 100$, we first set $N_n = \exp[C_0^2 (n+1)^2 2^{2R_{n-1}}]$, so that the second inequality in \eqref{V-large} is satisfied. Given the fast-growing nature of $R_n$ and hence $N_n$, the obvious choice of $M_n = N_n$ does not meet the criterion \eqref{log1}, so we need to control the parameters $M_n$ more  carefully. To this end, let 
\[ b_k :=\lfz\frac{(k+1)\ln N_{k+2}}{\ln N_k}\rfz+1, \quad B_k := \sum_{i=1}^{k} b_i, \quad \text{ and } \quad L_n = \#(V_{M_n}), \text{ where } \]
\begin{equation} \label{def-Mn}
M_n= \begin{cases} 
N_1  &\text{ if } n\le B_1 = b_1,\\
N_m &\text{ if }\ m \in\nn \text{ and } B_{m-1}<n \le B_m.
\end{cases}
 \end{equation}
The independent random sets $\{X_{\mathbf j} : \mathbf j \in \Sigma^{\ast} \}$ are chosen as before. Namely, given a sequence $\{\ell_{\mathbf j}:\ \mathbf j\in\Sigma^*\}$
of independent random variables, with $\ell_{\mathbf j}$ distributed uniformly in $[M_{n+1}]$ for $\mathbf j \in\Sigma_n$, we set
$$X_{\mathbf j}:=V_{M_{n+1}}+\ell_{\mathbf j} \mod M_{n+1}, \ \mbox{ for any }\ \mathbf j\in\Sigma_n.$$
Then the sets $X_{\mathbf j}$ obey the independence criteria (I) and (II) on page \eqref{indep-criteria}. Let $\mu$ be as in \eqref{mu}. We will use Theorem \ref{t-A} to establish that $\mu$ has the Fourier decay property \eqref{F-decay-again}.  
\vskip0.1in
\noindent Let us check the hypothesis of Theorem \ref{t-A}. For any $B_{m-1} < n \leq B_m$, we have from the definition of $b_m$ that 
$$\frac{\ln M_{n+1}}{\ln(M_1\cdots M_n)}\leq \ln M_{n+1} \Biggl[ \sum_{j= B_{m-2}+1}^{B_{m-1}} \ln M_j \Biggr]^{-1}
\leq \frac{\ln N_{m+1}}{b_{m-1}\ln N_{m-1}}<\frac1{m},
$$
which goes to zero as $n \rightarrow \infty$. This verifies \eqref{log1}. For $B_{m-1} < n \leq B_m$, we also have from \eqref{V-large} and \eqref{def-Mn} that 
$$\frac{\ln L_n}{\ln M_n}> \frac{\ln \#(V_{N_m})}{\ln N_m} \geq \az_m=\frac m{m+1} \rightarrow 1 \text{ as } n \nearrow \infty.$$
This shows that the liminf in \eqref{log2} is 1, and hence we may choose $\sigma = 1 -2 \eps$ for any  $\eps > 0$. Substituting this into \eqref{log3} leads to \eqref{F-decay-again}.  
\vskip0.1in
\noindent It remains to prove the avoidance property of $E = \text{supp}(\mu)$. The argument here is the same as in previous Theorems \ref{t-1} and \ref{t-4}.  
Suppose that $\{x,y,z\}\st E$  is a nontrivial solution of $t x+(1-t)y=z$ for some $t \in \mathfrak C$. Let $I_{\mathbf j}$ be the smallest basic interval in the construction of $E$ that contains $\{x,y,z\}$. Consider the affine transformation $T$ that maps $I_{\mathbf j}$ onto $[0,1]$. If $\mathbf j \in \Sigma_n$, then the corresponding affine copy $\{ T(x), T(y), T(z)\}$ of $\{x, y, z\}$ is contained in $\mathbb I(V_{M_{n+1}} + \ell)$. Since $M_{n+1} = N_m$ for some $m$ and $\mathfrak C \subseteq \mathfrak C_m$ for all $m$, it follows from Proposition \ref{C-construction-prop} 
that $\{T(x), T(y), T(z) \}$ is contained in $\mathtt I_{N_m, j+\ell}$, one of the basic intervals of $\mathbb I(V_{M_{n+1}} + \ell)$. Applying the transformation $T^{-1}$ that brings $[0,1]$ back to $I_{\mathbf j}$, we find that $\{x,y,z\}\in I_{\mathbf j, j_{n+1}}$ for some $j_{n+1} \in [M_{n+1}]$, contradicting minimality of $I_{\mathbf j}$. This completes the proof of Theorem \ref{t-5}.
\end{proof}

\section{The coefficient set of badly approximable numbers} 
The goal of this section is to prove Theorem \ref{t-6}. While the basic strategy remains the same as our previous non-existence results (Theorems \ref{t-1} and \ref{t-4}), one of the key discretization steps, namely Behrend's principle (Proposition \ref{p-1}), no longer applies, due to the diophantine nature of $\mathcal  E_{c, \tau}$. We now need to choose the building blocks differently.
\subsection{A uniform set of intervals partially avoiding coefficients in $\mathcal E_{c, \tau}$}   
We will use the definitions of $\mathtt I_{M,j}$ and $\mathbb I(Y)$ from Section \ref{partial-avoidance-section}.  
\begin{lemma}\label{l-5}
Let us fix $0 < \tau, c \leq 1$ and  $\eps_0\in(0,\frac12)$. Then for all sufficiently large $M\in\nn$,
there exists a set $W_M\subseteq [M]$ with the following two properties: 
\begin{enumerate}
\item[{\rm (i)}] $\#(W_M) >(\frac{c \eps_0M}{20})^{\frac1{1+\tau}}$,

\item[{\rm (ii)}]
If $\{x,y,z\}\st \mathbb I(W_M+\ell)$ is a solution of 
$$tx+(1-t)y=z\quad\mbox{ for any }
\quad t\in\cec\cap(\eps_0,1-\eps_0)$$
 for some $\ell\in [M]$, 
then there exists $j\in W_M$ such that $\{x,y,z\}\st \mathtt I_{M,j+\ell}$.
\end{enumerate}
\end{lemma}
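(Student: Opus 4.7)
The proof will follow the two-step scheme of Lemma \ref{l-4}, but since $\cec$ contains irrationals, Behrend's theorem (Proposition \ref{p-1}) is not available to supply the underlying discrete avoiding set. I will instead build $W_M$ as an arithmetic progression whose common difference is tuned to the diophantine constraint defining $\cec$. The heuristic is that whenever a triple $\{x,y,z\}\subset \mathbb I(W_M+\ell)$ solves $tx+(1-t)y=z$, the equation forces $t$ to approximate a rational whose reduced denominator is at most $\#W_M - 1$; making the common difference of $W_M$ large then prevents this approximation from being compatible with $|t - q/p| > c/p^{1+\tau}$.

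Concretely, I will set $N := \lfz(c\eps_0 M/4)^{1/(1+\tau)}\rfz$ and $K := \lfz N^\tau/c\rfz+1$, so that $cK > N^\tau$ and (for $M$ sufficiently large) $NK \le \eps_0 M/2$, and then define $W_M := \{0, K, 2K, \ldots, NK\} \subset [M]$. Part (i) of the lemma is immediate from these choices, since $\#W_M = N+1 \ge (c\eps_0 M/4)^{1/(1+\tau)} > (c\eps_0 M/20)^{1/(1+\tau)}$.

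For part (ii), given $\{x,y,z\} \subset \mathbb I(W_M+\ell)$ solving $tx+(1-t)y=z$, I will follow the decomposition in Lemma \ref{l-4} and write $x = (j_x+\ell+\dz_x)/M - \chi_x$ (and similarly for $y,z$), with $j_x,j_y,j_z \in W_M$, $\dz_x,\dz_y,\dz_z \in [0,1)$, and wrap-indicators $\chi_x,\chi_y,\chi_z \in \{0,1\}$. Substituting into the equation and multiplying through by $M$, the $\ell$-terms cancel, leaving
\[
 t(j_x+\dz_x)+(1-t)(j_y+\dz_y)-(j_z+\dz_z) \;=\; M\bigl[\,t\chi_x+(1-t)\chi_y-\chi_z\,\bigr].
\]
Because every element of $W_M$ is at most $\eps_0 M/2$, the left-hand side has absolute value strictly less than $\eps_0 M/2 + 1$. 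If the three $\chi$'s are not all equal, the bracket on the right takes a value in $\{\pm t,\pm(1-t),\pm 1\}$, each of magnitude at least $\eps_0$, so the right-hand side has magnitude at least $\eps_0 M$; for $M$ large this is inconsistent, forcing $\chi_x=\chi_y=\chi_z$ and the right-hand side to vanish.

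What remains is $tj_x+(1-t)j_y - j_z = -(t\dz_x+(1-t)\dz_y-\dz_z)$, with right-hand side of absolute value $<1$. If $j_x = j_y$, integrality forces $j_z = j_y$, and the triple lies in a single basic interval. Otherwise (WLOG $j_x > j_y$), writing $j_i = Ka_i$ with $a_i \in \{0,\ldots,N\}$ and reducing $(a_z-a_y)/(a_x-a_y)$ to lowest terms $\tilde q/\tilde p$, I will use $\tilde p \le a_x-a_y \le N$ to deduce $|t-\tilde q/\tilde p| < 1/(K\tilde p)$, at which point the defining inequality $|t-\tilde q/\tilde p| > c/\tilde p^{1+\tau}$ forces $cK < \tilde p^\tau \le N^\tau$, contradicting the design of $K$. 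I anticipate this last diophantine step to be the main obstacle: one must pass to the reduced fraction \emph{before} invoking the $\cec$ bound (which is sensitive to the lowest-terms denominator), and the calibration $cK > N^\tau$ must be tight enough that even the worst-case denominator $\tilde p = N$ produces a contradiction while keeping $\#W_M$ as large as the lemma demands.
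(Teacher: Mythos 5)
Your proposal is correct and follows essentially the same argument as the paper: build $W_M$ as an arithmetic progression of length $\approx M^{1/(1+\tau)}$ with common difference $\approx N^\tau/c$, eliminate the wrap terms by a size comparison, and then invoke the badly-approximable condition on the \emph{reduced} fraction obtained from $(j_z-j_y)/(j_x-j_y)$ to obtain the key contradiction $cK \leq N^\tau$. The paper's proof is identical in substance (it parameterizes by indices $j_i \in \{0,\ldots,N-1\}$ and sets $\rho = \gcd(j_z-j_y,j_x-j_y)$ rather than writing $j_i = K a_i$, and uses $N$ elements rather than $N+1$), so the differences are purely notational.
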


\begin{proof}
For large enough integers $M$, let us set 
\[ N:=\lfz(\frac{c \eps_0M}{10})^{\frac1{1+\tau}}\rfz, \quad R :=\lfz\frac {N^\tau}c\rfz+1 \quad \text{ and  } \quad W_M:=\{0,R,2R,\ldots, (N-1)R\} \subseteq [M]. \] 
Then
$$\#W_M=N>\lf(\frac{c \eps_0M}{20}\r)^{\frac1{1+\tau}},$$
verifying part (i) of the lemma. 
\vskip0.1in
\noindent Suppose now that the triple $\{x,y,z\} \subseteq \mathbb I(W_M + \ell)$ is a solution of 
some $tx+(1-t)y=z$ for some
$t\in\cec\cap(\eps_0,1-\eps_0)$. Then there exists some 
$j_x, j_y$ and $j_z \in \{0,\ldots, N-1\}$ and $\dz_x, \dz_y$ and $\dz_z\in[0,1)$ such that
\begin{equation}
\left\{\displaystyle
\begin{array}{l@{\qquad}l}
\displaystyle x=\frac{Rj_x+\ell+\dz_x}M,& \mbox{ if }\ R j_x+\ell<M,\\
\displaystyle x=\frac{Rj_x+\ell+\dz_x}M-1,& \mbox{ if }\ R j_x+\ell\ge M.
\end{array}
\right. \label{jx-def} 
\end{equation}
As before, we set 
\begin{equation} \label{chix-def} 
\chi_x= \begin{cases} 0  &\text{ if } R j_x+\ell< M, \\ 1 &\text{ if }  R j_x+\ell\ge M, \end{cases}. \end{equation}  The binary counters $\chi_y$ and $\chi_z$ are defined similarly. As in previous proofs, the conclusion of part (ii) of the lemma will follow if we show that $j_x = j_y = j_z$.
\vskip0.1in
\noindent To this end, let us substitute the expressions for $x$, $y$ and $z$ into the equation $tx+(1-t)y=z$,  
\begin{align*} 
&t(R j_x+\dz_x-M\chi_x)+(1-t)(R j_y+\dz_y-M\chi_y)=R j_z+\dz_z-M\chi_z; \text{ in other words} \\ 
&M(\chi_z-t\chi_x-(1-t)\chi_y)+
R[t(j_x-j_y)+j_y-j_z]+t\dz_x+(1-t)\dz_y-\dz_z=0. 
\end{align*}
Setting $\mathscr{I}_1:=M(\chi_z-t\chi_x-(1-t)\chi_y)$, $\mathscr{I}_2 :=R[t(j_x-j_y)+j_y-j_z]$ and $\mathscr{I}_3 :=t\dz_x+(1-t)\dz_y-\dz_z$, 
we arrive at the equation
\begin{equation} \mathscr{I}_1 + \mathscr{I}_2 + \mathscr{I}_3 = 0. \label{3-term-eq} \end{equation} 
We will now argue that in order for \eqref{3-term-eq} to hold, each individual term $\mathscr{I}_j$ must vanish. 
\vskip0.1in
\noindent Let us start with $\mathscr{I}_1$. If $\mathscr{I}_1$ is nonzero, that means that $\chi_x$, $\chi_y$ and $\chi_z$ are not all the same, in which case 
\begin{equation} \label{I1-below} 
|\mathscr{I}_1|=M|\chi_z-t\chi_x-(1-t)\chi_y|\ge\min\{t,1-t\}M\ge \eps_0M.
\end{equation} 
On the other hand, 
 \begin{align} |\mathscr{I}_1| = |\mathscr{I}_2 + \mathscr{I}_3| &\leq 2R(N-1) + 1 \notag \\ &< 2RN \leq 2 \Bigl( \frac{2N^{\tau}}{c}\Bigr) N \leq \frac{4}{c} N^{1 + \tau} \leq \frac{4}{c} \Bigl( \frac{c \eps_0 M}{5}\Bigr) = \frac{4 \eps_0}{5} M. \label{I1-above}  \end{align} 
Combining \eqref{I1-below} and \eqref{I1-above} lead to the desired contradiction. We can now rephrase \eqref{3-term-eq} as $\mathscr{I}_2 + \mathscr{I}_3 = 0$. 
\vskip0.1in
\noindent Suppose if possible that $\mathscr{I}_2$ is nonzero. If $j_x = j_y$, then $|\mathscr{I}_2| \geq R$. If $j_x \ne j_y$, suppose without loss of generality that $j_x > j_y$. Let $\rho = \text{gcd}(j_z-j_y, j_x-j_y)$, with $j_z - j_y = \rho q$ and $j_x - j_y = \rho p$, so that $q \in \mathbb Z$, $p \in \mathbb N$. The defining property of $\cec$ then implies  
\begin{align*} |\mathscr{I}_2|=R|j_x-j_y| \bigl| t-\frac{j_z-j_y}{j_x-j_y}\bigr| &= R|j_x - j_y| \bigl| t - \frac{q}{p} \bigr| \\ &> R|j_x - j_y| \frac{c}{p^{1 + \tau}}
>\frac{Rc}{|j_x-j_y|^\tau}
>\frac{Rc}{(N-1)^\tau}. \end{align*}
In either case, we have that 
\[ \frac{Rc}{(N-1)^\tau} < |\mathscr{I}_2| = |\mathscr{I}_3| < 1. \]  
This leads to the inequality $R < \frac{(N-1)^{\tau}}{c}$, which contradicts the definition of $R$. 
It follows that $\mathscr{I}_2 = 0$, or $j_z - j_y = t(j_x - j_y)$. Since $t$ is irrational, this further implies that $j_x=j_y=j_z$, which is the conclusion of Lemma \ref{l-5}(ii).
\end{proof}

\subsection{Proof of Theorem \ref{t-6}}
\begin{proof}
The proof is similar to its predecessors Theorems \ref{t-1} and \ref{t-4}, so we only sketch the details. 
Let $M\in\nn$ be large enough obeying the conclusions of Lemma \ref{l-5}. 
For any $n\in\nn$, we set $L_n:=\#W_M$, $M_n=M$.
Then
$$\lim_{n\to\fz}\frac{\ln M_{n+1}}{\ln(M_1\cdots M_n)}=0 \quad \text{ and } \quad \lim_{n\to\fz}\frac{\ln (L_1\cdots L_n)}{\ln(M_1\cdots M_n)} \geq \frac{1}{1+\tau}.$$
The sequence of random variables $\{\ell_{\mathbf j}:\ \mathbf {j}\in\Sigma^*\}$ is assumed to be independent and identically distributed 
uniformly in $[M]$. The random sets $X_{\mathbf j}:= W_M +\ell_{\mathbf j} \mod M$, for $\mathbf {j} \in\Sigma_n$ clearly satisfy the independence criteria (I) and (II) on page \pageref{indep-criteria}. 
If $\mu$ is as in \eqref{mu},  
then, by Theorem \ref{t-A}, we have that for every $\eps > 0$, 
$$|\wh\mu(k)|\leq C_{\eps} (1 + |k|)^{-\frac{1}{2(1 + \tau)} + \eps}\mbox{ for all }k\in\zz,$$
in other words, $E = \supp(\mu)$ has Fourier dimension at least $\frac{1}{1+\tau}$. 
\vskip0.1in
\noindent The proof that $E$ avoids nontrivial solutions of $tx+(1-t)y=z$ for any
$t\in\cec\cap(\eps_0,1-\eps_0)$ is also similar to before.
Indeed, if $\{x,y,z\}\subseteq E$  is such a solution and if $I_{\mathbf j}$ is the smallest basic interval in the construction containing $\{x,y,z\}$, it would follow from Lemma \ref{l-5}
that $\{x,y,z\}\in I_{\mathbf j, j_{n+1}}$ for some $j_{n+1} \in [M]$, contradicting minimality of the length of $I_{\mathbf j}$.
\end{proof}

\subsection{Proof of Corollary \ref{t-7}} 
The proof is identical to that of Theorem \ref{t-6}, with the same choice of construction parameters. 
The only distinction is that the building block Lemma \ref{l-5} has to be replaced 
by the stronger Lemma \ref{l-6} below. The set provided by this lemma manages to (partially) avoid both $\mathcal E_{c, \tau}$ as in \eqref{badly-approximable} and $\mathscr{A}(p, \kappa)$ as in \eqref{union-pk}.   
\begin{lemma}\label{l-6}
Fix $0 < \tau, c \leq 1$, $\eps_0\in(0,\frac12)$ and $p\in\nn$ with $p\ge2$. Then for all 
sufficiently large $M\in\nn$ and $\kz=\frac1{2pM}$,
there exists a set $Z_M\subseteq [M]$ with the following properties: 
\begin{enumerate}
\item[{\rm (i)}] $\#Z_M>M^{\frac1{1+\tau}} \exp \Bigl[ \frac1{1+\tau}\ln(\frac{cC_0}{6p})-(2p+5)\sqrt{\ln M} \Bigr]$,

\item[{\rm (ii)}]
If $\{x,y,z\}\st \mathbb I(Z_M+\ell)$ is a solution of 
\begin{equation} \label{badapprox+intervals} 
tx+(1-t)y=z\quad\mbox{ for any }
\quad t\in[\cec\cap(\eps_0,1- \eps_0)]\cup \Bigl[\bigcup_{q=1}^{p-1}\Bigl(\frac qp-\kz,\frac qp+\kz \Bigr)\Bigr]
\end{equation} 
 for some $\ell\in [M]$, 
then there exists $j\in Z_M$ such that $\{x,y,z\}\subseteq \mathtt I_{M,j+\ell}$.
\end{enumerate}
\end{lemma}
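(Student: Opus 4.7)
The plan is to merge the discrete constructions underlying Lemmas \ref{l-4} and \ref{l-5} into a single building block that neutralizes both types of forbidden coefficients simultaneously. Following the template of Lemma \ref{l-4}, a factor of $2p$ will handle the coefficients near the rationals $q/p$, while following Lemma \ref{l-5}, a common-difference-like factor $R \sim \widetilde{N}^{\tau}/c$ will handle the badly approximable coefficients. Concretely, I would choose a parameter $\widetilde{N} \in \mathbb N$ with $\widetilde{N} \asymp (c\eps_0 /(Cp^2))^{1/(1+\tau)} M^{1/(1+\tau)}$ for a sufficiently large absolute constant $C$, set $R := \lfloor \widetilde{N}^{\tau}/c \rfloor + 1$, and apply Proposition \ref{p-1} to the finite family
$$\cg := \{pz - qx - (p-q)y : 1 \leq q \leq p-1\} \subseteq \cf,$$
for which the value $A = 2p+1$ verifies \eqref{def-A}. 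This produces a set $Y_{\widetilde{N}} \subseteq [\widetilde{N}]$ of cardinality at least $\widetilde{N}\exp[-(2p+5)\sqrt{\ln \widetilde{N}}]$ that avoids all nontrivial zeros of every $f \in \cg$. I would then set
$$Z_M := 2pR \cdot Y_{\widetilde{N}} = \{2pRj : j \in Y_{\widetilde{N}}\}.$$
A direct computation using $\widetilde{N}^{1+\tau} \lesssim c\eps_0 M/(Cp^2)$ gives $2pR(\widetilde{N}-1) < M$, so $Z_M \subseteq [M]$. The cardinality lower bound in (i) then follows from the size of $Y_{\widetilde{N}}$ together with the bound $\sqrt{\ln \widetilde{N}} \leq \sqrt{\ln M}$, after absorbing the parameters into the constant $C_0$.

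For the partial avoidance claim (ii), I would argue as in the proofs of Lemmas \ref{l-4} and \ref{l-5}. Given $\{x,y,z\} \subseteq \mathbb I(Z_M + \ell)$ solving $tx + (1-t)y = z$ for some $t$ in the forbidden set of \eqref{badapprox+intervals}, each point admits a representation $x = (2pR j_x + \ell + \delta_x)/M - \chi_x$ (and similarly for $y,z$) with $j_x, j_y, j_z \in Y_{\widetilde{N}}$, $\delta \in [0,1)$, and $\chi \in \{0,1\}$. Substituting into the equation yields
$$M\bigl(\chi_z - t\chi_x - (1-t)\chi_y\bigr) + 2pR\bigl[t(j_x - j_y) + j_y - j_z\bigr] + \bigl[t\delta_x + (1-t)\delta_y - \delta_z\bigr] = 0.$$
For $t \in \cec \cap (\eps_0, 1-\eps_0)$, the analysis of Lemma \ref{l-5} applies: if the $\chi$'s disagree then the first term has modulus at least $\eps_0 M$, while the remaining two are bounded by $4pR\widetilde{N} + 1 < \eps_0 M$ by the choice of $\widetilde{N}$, forcing $\chi_x = \chi_y = \chi_z$; the residual identity $2pR\bigl[t(j_x - j_y) + j_y - j_z\bigr] + O(1) = 0$ combined with the defining diophantine property of $t \in \cec$ against the $R$-spacing then forces $j_x = j_y = j_z$. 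For $t \in (q/p - \kz, q/p + \kz)$ with $\kz = 1/(2pM)$, the analysis of Lemma \ref{l-4} applies: the first term either vanishes or has modulus at least $M/p - 1/(2p)$, while the other two are bounded by $4pR\widetilde{N} + 1 < M/(2p)$, again forcing $\chi_x = \chi_y = \chi_z$. Clearing denominators as in \eqref{pqm1} reduces the remaining equation to $qj_x + (p-q)j_y - pj_z = 0$ with $\{j_x, j_y, j_z\} \subseteq Y_{\widetilde{N}}$, and the Behrend-type property of $Y_{\widetilde{N}}$ supplied by Proposition \ref{p-1} admits only the constant solution $j_x = j_y = j_z$. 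In every case, $j := 2pR j_x \in Z_M$ and $\{x,y,z\} \subseteq \mathtt I_{M, j+\ell}$, as required.

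The main technical hurdle is the simultaneous selection of $\widetilde{N}$ and $R$ so that the inequalities $4pR\widetilde{N} < \eps_0 M$ (needed in the badly approximable regime) and $4p^2 R \widetilde{N} < M$ (needed in the rational regime) both hold, while still yielding the cardinality advertised in (i). Since $R\widetilde{N} \sim \widetilde{N}^{1+\tau}/c$, both constraints collapse into a single upper bound on $\widetilde{N}$ of the form $\widetilde{N} \lesssim (c\eps_0/(Cp^2))^{1/(1+\tau)} M^{1/(1+\tau)}$ for a sufficiently large absolute $C$, and a compatible choice of $\widetilde{N}$ gives the stated size bound with an appropriate constant $C_0$. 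Beyond this bookkeeping, the proof is essentially a clean superposition of its two parents: the $2p$-scaling preserves the Behrend-type obstruction against the rational coefficients, while the $R$-spacing activates the diophantine obstruction against the badly approximable coefficients, and the two mechanisms function in parallel without interference.
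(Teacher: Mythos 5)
Your proposal is correct and takes essentially the same route as the paper: the paper also applies Proposition \ref{p-1} with the same family $\cg$ and $A = 2p+1$, chooses $N \asymp (c\eps_0 M/p)^{1/(1+\tau)}$, sets $Z_M = R\cdot Y_N$, decomposes the substituted equation into $\mathscr{I}_1 + \mathscr{I}_2 + \mathscr{I}_3 = 0$, and splits into the same two cases for $t$. The only cosmetic difference is that you factor the $2p$ out into the dilation ($Z_M = 2pR\cdot Y_{\widetilde N}$, $R \sim \widetilde{N}^\tau/c$), whereas the paper absorbs it into $R$ ($Z_M = R\cdot Y_N$ with $R \sim 2pN^\tau/c$); your version makes the integrality argument $2R[qj_x + (p-q)j_y - pj_z] = 0$ in the rational regime slightly more transparent, but the mechanism is identical.
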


\begin{proof}
For sufficiently large integers $M$, we set 
\[ N:=\lfz \Bigl(\frac{c \eps_0M}{4p} \Bigr)^{\frac1{1+\tau}}\rfz \quad \text{ and } \quad R :=\lfz\frac {2pN^\tau}c\rfz+1.\] 
Applying Proposition \ref{p-1} with \[ \cg = \bigcup_{q=1}^{p-1}\Bigl\{f : \mathbb R^3 \rightarrow \mathbb R \; \Bigl| \; f(x, y, z) = pz - qx - \bigl(p-q \bigr) y \Bigr\}  \quad \text{ and } \quad A = 2p+1,  \] 
we find a set $Y_{N}\subseteq [N]$ such that
$\#Y _{N}>N \exp \bigl[-(2p+5)\sqrt{\ln N} \bigr]$
and $Y_{N}$ contains no nontrivial solution of 
\begin{align}\label{inf}
tx &+(1-t)y=z\quad\mbox{ for any } t\in \mathtt{Q}_p := \Bigl\{\frac qp: 1 \leq q \leq p-1 \Bigr\}; \text{ indeed} \\ 
\label{inf1}
\inf\Bigl\{|tx &+(1-t)y-z|:\ \text{ distinct } x,y,z \in Y_{N}, t\in \mathtt Q_p \Bigr\}\ge\frac1p.
\end{align}
We now set $Z_M:=R Y_{N} =\{Rx:\ x\in Y_{N}\} \subseteq [M]$. Then
\begin{align*} \#Z_M=\#Y_{N} &>N \exp \bigl[-(2p+5)\sqrt{\ln N} \bigr] \\
&>M^{\frac1{1+\tau}} \exp \Bigl[ \frac1{1+\tau}\ln\Bigl(\frac{c \eps_0}{6p} \Bigr)-(2p+5)\sqrt{\ln M} \Bigr],
\end{align*}
as claimed in part (i) of the lemma.
\vskip0.1in
\noindent Suppose that $\{x,y,z\}$ is a triple of points in $\mathbb I(Z_M + \ell)$ obeying \eqref{badapprox+intervals} for some $t$ in the forbidden class of coefficients. 
We define the quantities $j_x, j_y, j_z \in Y_N$, the remainders $\dz_x, \dz_y, \dz_z\in[0,1)$ and the binary counters $\chi_x, \chi_y, \chi_z \in \{0, 1\}$ exactly as in \eqref{jx-def} and \eqref{chix-def}.
Substituting these expressions for $x, y, z$ into the equation $tx+(1-t)y=z$ and following the exact same steps as in Lemma \ref{l-5},
we arrive at the equation $\mathscr{I}_1 + \mathscr{I}_2 + \mathscr{I}_3 = 0$, 
where 
\[ \mathscr{I}_1:=M(\chi_z-t\chi_x-(1-t)\chi_y), \quad \mathscr{I}_2:=R[t(j_x-j_y)+j_y-j_z], \quad 
\mathscr{I}_3:=t\dz_x+(1-t)\dz_y-\dz_z. \]
As in Lemma \ref{l-5}, our goal is to show that is possible only when $\mathscr{I}_1 = \mathscr{I}_2 = \mathscr{I}_3 = 0$ and $j_x = j_y = j_z$, which immediately yields the conclusion in part (ii) of the lemma.
\vskip0.1in 
\noindent We always have that \begin{equation}  |\mathscr{I}_1| = |\mathscr{I}_2 + \mathscr{I}_3| \leq 2R(N_1)+1 < 2RN < \frac{3p}{c} N^{\tau+1} < \Bigl(\frac{3p}{c}\Bigr) \Bigl(\frac{c \eps_0}{3p} M \Bigr)= \eps_0M. \label{I1-below-2} \end{equation}  
On the other hand, if $\chi_x$, $\chi_y$ and $\chi_z$ are not all identical, then
\begin{equation} |\mathscr{I}_1|=M|\chi_z-t\chi_x-(1-t)\chi_y|\ge\min\{t,1-t\}M\ge \eps_0M. \label{I1-above-2} 
\end{equation} 
Combining \eqref{I1-below-2} and \eqref{I1-above-2} leads to a contradiction, which allows us to conclude that $\chi_x = \chi_y = \chi_z$, or $\mathscr{I}_1 = 0$.  
\vskip0.1in
\noindent We will now show that equation $\mathscr{I}_2 + \mathscr{I}_3 = 0$ forces $\mathscr{I}_2 = \mathscr{I}_3 = 0$ and $j_x = j_y = j_z$. The proof of this naturally splits into two cases, depending on the nature of $t$.
 The first case occurs when  $t\in\cec\cap(\eps_0,1-\eps_0)$. Let us assume $\mathscr{I}_2 \ne 0$ and aim for a contradiction. 
The estimate for $\mathscr{I}_2$ given in Lemma \ref{l-5} lifts verbatim to this context, and gives that  
\begin{align}\label{i21}
\frac{Rc}{(N-1)^{\tau}} < |\mathscr{I}_2| = |\mathscr{I}_3| \leq 1. 
\end{align}
Since $R > 2pN^{\tau}/c$, the left hand side is bounded below by $2p$, leading to a contradiction in the inequality \eqref{i21}. Thus $\mathscr{I}_2 = 0$. Since $t$ is irrational, this is possible only if $j_x = j_y = j_z$. 
\vskip0.1in
\noindent For the second case, suppose that $t$ lies within an open $\kappa$-neighbourhood of $\mathtt Q_p$.
Then there exist
$q\in\{1,\ldots,p-1\}$ and $\zeta \in(-\kz,\kz)$
such that $t=\frac qp+\zeta$. Assume towards a contradiction that $\mathscr{I}_2 \ne 0$, which incidentally also means that $j_x, j_y, j_z$ are not all identical. Since $\{j_x, j_y, j_z\} \subseteq Y_N$, substituting the value of $t$ into the expression for $\mathscr{I}_2$ and invoking \eqref{inf1} yields 
\begin{align*}
|\mathscr{I}_2|
&\geq R\Biggl[\Bigl|\frac qp j_x+\frac{p-q}pj_y-j_z\Bigr|
-\Bigl|\zeta(j_x+j_y-j_z)\Bigr|\Biggr] \\
&\geq \frac{R}{p} - \kappa N \geq \frac{R}{p} - \frac{1}{2p} \geq \frac{R}{2p} \gg 1.
\end{align*}
This provides a contradiction for large $N$, since $|\mathscr{I}_2| = |\mathscr{I}_3| \leq 1$. Thus $\mathscr{I}_2 = 0$, which means that $\{ j_x, j_y, j_z\} \subseteq Y_N$ is a solution of the equation \[ t j_x + (1-t) j_y = j_z; \; \text{ in other words } \; \frac{q}{p} j_x +  (1 - \frac{q}{p}) j_y - j_z = \Bigl(\frac{q}{p} - t \Bigr) (j_x - j_y). \] The left hand side of the last equation, if nonzero, is bounded from below in absolute value by $R/p \gg 1$, by \eqref{inf1}. The absolute value of the right hand side is bounded above by $\kappa N < 1/2$. This leads to another contradiction, from which we deduce that each side of the equation must vanish. Thus $\frac{q}{p} j_x + (1 - \frac{q}{p}) j_y = j_z$, i.e., $\{ j_x, j_y, j_z \}$ obeys one of the equations in \eqref{inf}. The construction of $Y_N$ then implies that the solution must be trivial, i.e., $j_x = j_y = j_z$.   
%
This completes the proof of Lemma \ref{l-6}.
\end{proof}

\Addresses

%
%
%
%
%
%
%
%
%
%
%
%
%

\end{document}